\documentclass{amsart}
\usepackage[utf8]{inputenc}

\usepackage{amsfonts, amsthm, amsmath, enumerate, amssymb,mathrsfs, tikz, tikz-cd, ifthen, color, chngcntr, mathtools, multicol,array, bbm, euscript}
\usepackage[margin=1.25in]{geometry}

\usetikzlibrary{arrows}
\usetikzlibrary{decorations.markings}

\newtheorem{theorem}[subsection]{Theorem}

\newtheorem{lemma}[subsection]{Lemma}
\newtheorem{remark}[subsection]{Remark}
\newtheorem{corollary}[subsection]{Corollary}
\theoremstyle{definition}
\newtheorem{definition}[subsection]{Definition}
\newtheorem{example}[subsection]{Example}
\newtheorem{notation}[subsection]{Notation}

\definecolor{light-gray}{gray}{.65}

\newcommand{\M}{\mathbb{M}}
\newcommand{\R}{\mathbb{R}}
\newcommand{\Z}{\mathbb{Z}}

\newcommand{\Hom}{\operatorname{Hom}}
\newcommand{\resettheoremcounters}{\setcounter{class}{0}}

\newcommand{\Hex}{\operatorname{Poly}}
\newcommand{\Sph}{\operatorname{Sph}}
\newcommand{\ebc}{\mathbb{EB}}
\newcommand{\coker}{\operatorname{coker}}
\newcommand{\inv}{^{-1}}

%Z/3 cohomology of a point
%tex code: \cone{x-coord}{y-coord}{color}
\newcommand{\cone}[3]{
	\draw[thick, #3] (#1+1/2,5) -- (#1+1/2,#2+1/2) -- (10-#2-#2+#1-1/2,5);
	\draw[thick, #3] (#1+1/2,-5) -- (#1 +1/2, #2 -0.5) -- (-8-1/2-#2-#2+#1,-5)
}

%Z/3 cohomology of a point (smaller)
%tex code: \scone{x-coord}{y-coord}{color}
\newcommand{\scone}[3]{
	\draw[thick, #3] (#1+1/2,5) -- (#1+1/2,#2+1/2) -- (5,{(5-#1)/2+#2+1/4});
	\draw[thick, #3] (#1+1/2,-5) -- (#1 +1/2, #2 -0.5) -- (-3,{(-3-#1)/2-3/4+#2})
}

%Z/3 cohomology of EB
%tex code: \ogeb{x-coord}{y-coord}{color}

%Z/3 cohomology of EB
%tex code: \eb{x-coord}{y-coord}{color}
\newcommand{\eb}[3]{
	\draw[thick, #3] (#1+1/2,5) -- (#1+1/2,#2+1/2) -- (#1+1+1/2,#2+1/2) -- (5,{(5-#1)/2+#2-1/4});
	\draw[thick, #3] (#1+1/2,-5) -- (#1+1/2,#2-1.5) -- (#1-0.5,#2-1.5) -- (-3,{(-3-#1)/2+#2-5/4});
}

%Z/3 cohomology of C_3 and free spheres
\newcommand{\cthree}[3]{
	\draw[thick, #3] (#1+1/2, -5) -- (#1+1/2, 5);
	\draw[thick, #3] (#1+#2+1/2, -5) -- (#1+#2+1/2, 5);
	{\ifthenelse{#2>1}{
	\foreach \y in {-5,...,2} {
	\draw[thick, #3] (#1+1/2, \y +1/2) -- (#1+#2+1/2, \y + #2+1/2);
	}
	}{
	\foreach \y in {-5,-4,-3,-2,-1,0,1,2,3} {\draw[thick, #3] (#1+1/2, \y +1/2) -- (#1+#2+1/2, \y + #2+1/2);}}
	};
}

%puts a label above your picture
%\lab{x-coord}{label}{color}
\newcommand{\lab}[3]{
\draw[#3] (#1+1/2,5.5) node{\tiny{$#2$}}
}

\title{The $RO(C_3)$-graded Bredon cohomology of $C_3$-surfaces in $\underline{\Z/3}$-coefficients}
\author{Kelly Pohland}
\date{May 2023}

\begin{document}

\maketitle

\begin{abstract}
    All closed surfaces with a $C_p$-action where $p$ is an odd prime were classified in \cite{Pohl} using equivariant surgery methods. Using this classification in the case $p=3$, we compute the $RO(C_3)$-graded Bredon cohomology of all $C_3$-surfaces in constant $\Z/3$-coefficients as modules over the cohomology of a fixed point. We show that the cohomology of a given $C_3$-surface is determined by a handful of topological invariants and is directly determined by the construction of the surface via equivariant surgery.
    %Let $C_3$ denote the cyclic group of order $3$. We .............. The results in this paper serve as an odd prime analogue to a similar computation performed by Christy Hazel.
\end{abstract}

\section{Introduction}
%[NEEDS FIXING AND ELABORATION] The second goal of this paper is to compute the $RO(C_3)$-graded Bredon cohomology of all $C_3$-surfaces given in the classification, in constant $\underline{\Z/3}$ coefficients. This Bredon theory provides a nice analogue for singular cohomology in $\Z/3$ coefficients. Many recent computations have been done in $RO(G)$-graded Bredon theory \cite{CHT21,Dug15,LFdS09,Haz19b,Haz19a,Hog18,May18,San21,Wil19}, including a similar computation of the cohomology of equivariant surfaces in the $G=C_2$ case \cite{Haz19a}.\\

%We begin with..... 
For a space $X$ with an action of a finite group $G$, the $RO(G)$-graded Bredon cohomology of $X$ is a sequence of abelian groups, graded on the Grothendieck group of real, finite-dimensional, orthogonal $G$-representations. Represented by a genuine equivariant Eilenberg-MacLane spectrum, this ordinary cohomology theory provides a direct analogue for singular cohomology in the equivariant setting. Increased interest in equivariant homotopy theory has led to greater efforts to understand the properties of $RO(G)$-graded Bredon cohomology. In particular, it has inspired many recent Bredon cohomology computations \cite{CHT21,Dug15,LFdS09,Haz19b,Haz19a,Hog18,May18,San21,Wil19}. %has led to greater efforts to understand the properties of $RO(G)$-graded Bredon cohomology. In particular, it has inspired...

Computations in Bredon cohomology often prove quite complicated despite their fundamental role in equivariant homotopy theory. As a consequence, most results focus on the case where the group action is by the cyclic group of order $2$. Our goal for this paper is to present a complete family of computations in $RO(C_3)$-graded Bredon cohomology, where $C_3$ is the cyclic group of order $3$. In particular, we will be computing the cohomology of all closed, connected $2$-manifolds with a nontrivial action of $C_3$ in $\underline{\Z/3}$-coefficients. The work in this paper uses similar computational methods as those in \cite{Haz19b} and serves as an analogue to her result at the prime $3$. 

A key ingredient in our computation is a recent equivariant surgery classification of $C_3$-surfaces
\cite{Pohl}. This classification provides blueprints for building $C_3$-surfaces using a handful of surgery methods and informs the construction of equivariant cofiber sequences. These tools allow us to present the cohomology of all $C_3$-surfaces in two ways. We first provide the answer in terms of their construction as presented in \cite{Pohl}. We are then able to provide explicit formulas for the cohomology which depend on a handful of numerical invariants for the surface. 

In order to state the main result, let us begin with some background on $RO(C_3)$-graded Bredon cohomology. Up to isomorphism, there are two irreducible real representations of $C_3$, namely the trivial representation ($\R_{\text{triv}}$) and the two-dimensional representation given by rotation of $120^\circ$ about the origin ($\R^2_{\text{rot}}$). So any element of $RO(C_3)$ can be represented as $\R_{\text{triv}}^{\oplus p-2q}\oplus \left(R^2_{\text{rot}}\right)^{\oplus q}$ and is completely determined by the values $p$ and $q$. As a result, $RO(C_3)$-graded Bredon cohomology can be viewed as a bigraded theory, with the cohomology of a $C_3$-space $X$ with coefficients in the Mackey functor $M$ denoted $H^{p,q}(X;M)$. Note that under this convention, our first grading $p$ represents the total topological dimension of our representation, and $q$ represents the number of copies of $\R^2_{\text{rot}}$.

Define $\M_3$ to be the $RO(C_3)$-graded Bredon cohomology of a fixed point in $\underline{\Z/3}$ coefficients. In this paper, we compute the cohomology of all closed, connected, non-trivial $C_3$-surfaces as $\M_3$-modules. It turns out there are only a few $\M_3$-modules which show up in the cohomology of $C_3$-surfaces. These modules are $\M_3$, the cohomology of the freely rotating circle ($S^1_{\text{free}}$), the cohomology of $C_3$, and a module called $\ebc$ which denotes the reduced cohomology of the unreduced suspension of $C_3$. Since our Bredon theory is bigraded, we can depict each of these modules in the $(p,q)$-plane, where the $(p,q)$th cohomology group is depicted above and to the right of the $(p,q)$th spot on the grid. Figures \ref{modules1} and \ref{modules2} give depictions of these $\M_3$-modules in the $(p,q)$-plane. Each dot in these figures represents a copy of $\Z/3$. 

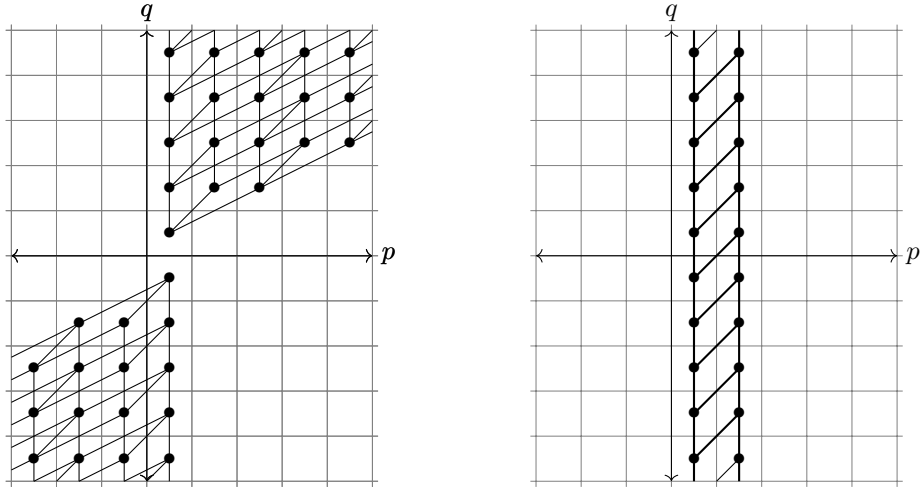
\begin{figure}
\begin{minipage}{0.45\textwidth}
\begin{center}
\begin{tikzpicture}[scale=0.6]
	\draw[help lines] (-3.125,-5.125) grid (5.125, 5.125);
	\draw[<->] (-3,0)--(5,0)node[right]{$p$};
	\draw[<->] (0,-5)--(0,5)node[above]{$q$};;
	\draw[help lines] (-3.125,-5.125) grid (5.125, 5.125);
		\draw[<->] (-3,0)--(5,0)node[right]{$p$};
		\draw[<->] (0,-5)--(0,5)node[above]{$q$};;
		\draw (.5,-.5) node{$\bullet$};
		\draw (.5,-1.5) node{$\bullet$};
		\draw (.5,-2.5) node{$\bullet$};
		\draw (.5,-3.5) node{$\bullet$};
		\draw (.5,-4.5) node{$\bullet$};
		\draw (1.5,2.5) node{$\bullet$};
		\draw (1.5,3.5) node{$\bullet$};
		\draw (1.5,4.5) node{$\bullet$};
		\draw (2.5,2.5) node{$\bullet$};
		\draw (2.5,3.5) node{$\bullet$};
		\draw (2.5,4.5) node{$\bullet$};
		\draw (3.5,2.5) node{$\bullet$};
		\draw (3.5,3.5) node{$\bullet$};
		\draw (3.5,4.5) node{$\bullet$};
		\draw (4.5,2.5) node{$\bullet$};
		\draw (4.5,3.5) node{$\bullet$};
		\draw (4.5,4.5) node{$\bullet$};
		\draw (-.5,-1.5) node{$\bullet$};
		\draw (-.5,-2.5) node{$\bullet$};
		\draw (-.5,-3.5) node{$\bullet$};
		\draw (-.5,-4.5) node{$\bullet$};
		\draw (-1.5,-1.5) node{$\bullet$};
		\draw (-1.5,-2.5) node{$\bullet$};
		\draw (-1.5,-3.5) node{$\bullet$};
		\draw (-1.5,-4.5) node{$\bullet$};
		\draw (-2.5,-2.5) node{$\bullet$};
		\draw (-2.5,-3.5) node{$\bullet$};
		\draw (-2.5,-4.5) node{$\bullet$};
		\draw (1.5,1.5) -- (1.5,5);
		\draw (2.5,1.5) -- (2.5,5);
		\draw (3.5,2.5) -- (3.5,5);
		\draw (4.5,2.5) -- (4.5,5);
		\draw (.5,-.5) -- (.5,-5);
		\draw (-.5,-1.5) -- (-.5,-5);
		\draw (-1.5,-1.5) -- (-1.5,-5);
		\draw (-2.5,-2.5) -- (-2.5,-5);
		\draw (2.5,1.5) -- (3.5,2.5);
		\draw (2.5,2.5) -- (3.5,3.5);
		\draw (2.5,3.5) -- (3.5,4.5);
		\draw (2.5,4.5) -- (3,5);
		\draw (4.5,2.5) -- (5,3);
		\draw (4.5,3.5) -- (5,4);
		\draw (4.5,4.5) -- (5,5);
		\draw (-.5,-1.5) -- (.5,-.5);
		\draw (-.5,-2.5) -- (.5,-1.5);
		\draw (-.5,-3.5) -- (.5,-2.5);
		\draw (-.5,-4.5) -- (.5,-3.5);
		\draw (0,-5) -- (.5,-4.5);
		\draw (-2.5,-2.5) -- (-1.5,-1.5);
		\draw (-2.5,-3.5) -- (-1.5,-2.5);
		\draw (-2.5,-4.5) -- (-1.5,-3.5);
		\draw (-2,-5) -- (-1.5,-4.5);
		\draw (2.5,1.5) -- (5,2.75);
		\draw (.5,1.5) -- (5,3.75);
		\draw (.5,2.5) -- (5,4.75);
		\draw (.5,3.5) -- (3.5,5);
		\draw (.5,4.5) -- (1.5,5);
		\draw (1.5,1.5) -- (5,3.25);
		\draw (1.5,2.5) -- (5,4.25);
		\draw (1.5,3.5) -- (4.5,5);
		\draw (1.5,4.5) -- (2.5,5);
		\draw (.5,-.5) -- (-3,-2.25);
		\draw (.5,-1.5) -- (-3,-3.25);
		\draw (.5,-2.5) -- (-3,-4.25);
		\draw (.5,-3.5) -- (-2.5,-5);
		\draw (.5,-4.5) -- (-.5,-5);
		\draw (-.5,-1.5) -- (-3,-2.75);
		\draw (-.5,-2.5) -- (-3,-3.75);
		\draw (-.5,-3.5) -- (-3,-4.75);
		\draw (-.5,-4.5) -- (-1.5,-5);
		\draw (2.5,1.5) node{$\bullet$};
		\draw (1.5,1.5) node{$\bullet$};
		\draw (.5,.5) -- (.5,5);
		\draw (.5,.5) -- (2.5,1.5);
		\draw (.5,.5) -- (1.5,1.5);
		\draw (.5,1.5) -- (1.5,2.5);
		\draw (.5,2.5) -- (1.5,3.5);
		\draw (.5,3.5) -- (1.5,4.5);
		\draw (.5,4.5) -- (1,5);
		\draw (.5,.5) node{$\bullet$};
		\draw (.5,1.5) node{$\bullet$};
		\draw (.5,2.5) node{$\bullet$};
		\draw (.5,3.5) node{$\bullet$};
		\draw (.5,4.5) node{$\bullet$};
\end{tikzpicture}
	\end{center}
\end{minipage}\ \begin{minipage}{0.45\textwidth}
\begin{center}
	\begin{tikzpicture}[scale=0.6]
		\draw[help lines] (-3.125,-5.125) grid (5.125, 5.125);
		\draw[<->] (-3,0)--(5,0)node[right]{$p$};
		\draw[<->] (0,-5)--(0,5)node[above]{$q$};;
		\cthree{0}{1}{black};
		\draw (.5,.5) node{$\bullet$};
		\draw (.5,1.5) node{$\bullet$};
		\draw (.5,2.5) node{$\bullet$};
		\draw (.5,3.5) node{$\bullet$};
		\draw (.5,4.5) node{$\bullet$};
		\draw (.5,-.5) node{$\bullet$};
		\draw (.5,-1.5) node{$\bullet$};
		\draw (.5,-2.5) node{$\bullet$};
		\draw (.5,-3.5) node{$\bullet$};
		\draw (.5,-4.5) node{$\bullet$};
		\draw (1.5,.5) node{$\bullet$};
		\draw (1.5,1.5) node{$\bullet$};
		\draw (1.5,2.5) node{$\bullet$};
		\draw (1.5,3.5) node{$\bullet$};
		\draw (1.5,4.5) node{$\bullet$};
		\draw (1.5,-.5) node{$\bullet$};
		\draw (1.5,-1.5) node{$\bullet$};
		\draw (1.5,-2.5) node{$\bullet$};
		\draw (1.5,-3.5) node{$\bullet$};
		\draw (1.5,-4.5) node{$\bullet$};
		\draw (.5,4.5) -- (1,5);
		\draw (1,-5) -- (1.5,-4.5);
	\end{tikzpicture}
	\end{center}
\end{minipage}
\caption{\label{modules1} The $\M_3$-modules $\M_3$ (left) and $H^{*,*}(S^1_{\text{free}})$ (right).}%%DOESN'T LIKE MATHBB%%%%%
\end{figure}

\begin{figure}
\begin{minipage}{0.45\textwidth}
\begin{center}
\begin{tikzpicture}[scale=0.6]
	\draw[help lines] (-3.125,-5.125) grid (5.125, 5.125);
	\draw[<->] (-3,0)--(5,0)node[right]{$p$};
	\draw[<->] (0,-5)--(0,5)node[above]{$q$};;
	\cthree{0}{0}{black};
	\draw (0.5,0.5) node{$\bullet$};
	\draw (0.5,1.5) node{$\bullet$};
	\draw (0.5,2.5) node{$\bullet$};
	\draw (0.5,3.5) node{$\bullet$};
	\draw (0.5,4.5) node{$\bullet$};
	\draw (0.5,-0.5) node{$\bullet$};
	\draw (0.5,-1.5) node{$\bullet$};
	\draw (0.5,-2.5) node{$\bullet$};
	\draw (0.5,-3.5) node{$\bullet$};
	\draw (0.5,-4.5) node{$\bullet$};
\end{tikzpicture}
	\end{center}
\end{minipage}\ \begin{minipage}{0.45\textwidth}
\begin{center}
	\begin{tikzpicture}[scale=0.6]
		\draw[help lines] (-3.125,-5.125) grid (5.125, 5.125);
		\draw[<->] (-3,0)--(5,0)node[right]{$p$};
		\draw[<->] (0,-5)--(0,5)node[above]{$q$};;
		\draw (.5,-.5) node{$\bullet$};
		\draw (.5,-1.5) node{$\bullet$};
		\draw (.5,-2.5) node{$\bullet$};
		\draw (.5,-3.5) node{$\bullet$};
		\draw (.5,-4.5) node{$\bullet$};
		\draw (1.5,-.5) node{$\bullet$};
		\draw (1.5,-1.5) node{$\bullet$};
		\draw (1.5,-2.5) node{$\bullet$};
		\draw (1.5,-3.5) node{$\bullet$};
		\draw (1.5,-4.5) node{$\bullet$};
		\draw (1.5,2.5) node{$\bullet$};
		\draw (1.5,3.5) node{$\bullet$};
		\draw (1.5,4.5) node{$\bullet$};
		\draw (2.5,2.5) node{$\bullet$};
		\draw (2.5,3.5) node{$\bullet$};
		\draw (2.5,4.5) node{$\bullet$};
		\draw (3.5,2.5) node{$\bullet$};
		\draw (3.5,3.5) node{$\bullet$};
		\draw (3.5,4.5) node{$\bullet$};
		\draw (4.5,2.5) node{$\bullet$};
		\draw (4.5,3.5) node{$\bullet$};
		\draw (4.5,4.5) node{$\bullet$};
		\draw (-.5,-1.5) node{$\bullet$};
		\draw (-.5,-2.5) node{$\bullet$};
		\draw (-.5,-3.5) node{$\bullet$};
		\draw (-.5,-4.5) node{$\bullet$};
		\draw (-1.5,-1.5) node{$\bullet$};
		\draw (-1.5,-2.5) node{$\bullet$};
		\draw (-1.5,-3.5) node{$\bullet$};
		\draw (-1.5,-4.5) node{$\bullet$};
		\draw (-2.5,-2.5) node{$\bullet$};
		\draw (-2.5,-3.5) node{$\bullet$};
		\draw (-2.5,-4.5) node{$\bullet$};
		\draw (1.5,1.5) -- (1.5,5);
		\draw (2.5,1.5) -- (2.5,5);
		\draw (3.5,2.5) -- (3.5,5);
		\draw (4.5,2.5) -- (4.5,5);
		\draw (1.5,-.5) -- (1.5,-5);
		\draw (.5,-.5) -- (.5,-5);
		\draw (-.5,-1.5) -- (-.5,-5);
		\draw (-1.5,-1.5) -- (-1.5,-5);
		\draw (-2.5,-2.5) -- (-2.5,-5);
		\draw (2.5,1.5) -- (3.5,2.5);
		\draw (2.5,2.5) -- (3.5,3.5);
		\draw (2.5,3.5) -- (3.5,4.5);
		\draw (2.5,4.5) -- (3,5);
		\draw (4.5,2.5) -- (5,3);
		\draw (4.5,3.5) -- (5,4);
		\draw (4.5,4.5) -- (5,5);
		\draw (-.5,-1.5) -- (.5,-.5);
		\draw (-.5,-2.5) -- (.5,-1.5);
		\draw (-.5,-3.5) -- (.5,-2.5);
		\draw (-.5,-4.5) -- (.5,-3.5);
		\draw (0,-5) -- (.5,-4.5);
		\draw (-2.5,-2.5) -- (-1.5,-1.5);
		\draw (-2.5,-3.5) -- (-1.5,-2.5);
		\draw (-2.5,-4.5) -- (-1.5,-3.5);
		\draw (-2,-5) -- (-1.5,-4.5);
		\draw (2.5,1.5) -- (5,2.75);
		\draw (2.5,2.5) -- (5,3.75);
		\draw (2.5,3.5) -- (5,4.75);
		\draw (2.5,4.5) -- (3.5,5);
		\draw (1.5,1.5) -- (5,3.25);
		\draw (1.5,2.5) -- (5,4.25);
		\draw (1.5,3.5) -- (4.5,5);
		\draw (1.5,4.5) -- (2.5,5);
		\draw (.5,-.5) -- (-3,-2.25);
		\draw (.5,-1.5) -- (-3,-3.25);
		\draw (.5,-2.5) -- (-3,-4.25);
		\draw (.5,-3.5) -- (-2.5,-5);
		\draw (.5,-4.5) -- (-.5,-5);
		\draw (1.5,-.5) -- (-3,-2.75);
		\draw (1.5,-1.5) -- (-3,-3.75);
		\draw (1.5,-2.5) -- (-3,-4.75);
		\draw (1.5,-3.5) -- (-1.5,-5);
		\draw (1.5,-4.5) -- (.5,-5);
		\draw (2.5,1.5) node{$\bullet$};
		\draw (1.5,1.5) node{$\bullet$};
	\end{tikzpicture}
	\end{center}
\end{minipage}
\caption{\label{modules2} The $\M_3$-modules $H^{*,*}(C_3)$ (left) and $\ebc$ (right).}
\end{figure}

The $\M_3$-module structure of these important pieces are discussed more thoroughly in Section \ref{preliminaries}. For now, we introduce some notation and preview the main result on the cohomology of $C_3$-surfaces. Let $F(X)$ denote the number of fixed points of a given $C_3$-surface $X$. It is useful to note that when the action is non-trivial, $F(X)$ must be finite. We also let $\beta(X)$ denote the \textbf{$\beta$-genus} of $X$, defined to be $\operatorname{dim}_{\Z/2} H^1_{\text{sing}}(X;\Z/2)$. 

\begin{theorem}
Let $X$ be a free $C_3$-surface. 
\begin{enumerate}
\item If $X$ is orientable, then
\[H^{*,*}(X)\cong H^{*,*}(S^1_{\textup{free}})\oplus \Sigma^{1,0}H^{*,*}(S^1_{\textup{free}})\oplus \left(\Sigma^{1,0}H^{*,*}(C_3)\right)^{\oplus \frac{\beta(X)-2}{3}}.\]
\item If $X$ is non-orientable, then
\[H^{*,*}(X)\cong H^{*,*}(S^1_{\textup{free}})\oplus \left(\Sigma^{1,0}H^{*,*}(C_3)\right)^{\oplus \frac{\beta(X)-2}{3}}.\]
\end{enumerate}
\end{theorem}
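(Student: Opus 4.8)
The plan is to push the computation down to the quotient surface $Y:=X/C_3$ and then rebuild $H^{*,*}(X)$ from a single cofiber sequence. Since the action is free, $X\to Y$ is a regular $C_3$-cover, $Y$ is a closed surface, and $\chi(X)=3\chi(Y)$. First I would record the bookkeeping that follows from this together with \cite{Pohl}: an index-$3$ subgroup of $\pi_1(Y)$ surjects onto every $\Z/2$-quotient, so $X$ is orientable exactly when $Y$ is, and combined with $\chi(X)=3\chi(Y)$ this shows that the number $r$ of one-cells in the standard one-vertex CW structure on $Y$ equals $2+\tfrac{\beta(X)-2}{3}$ in both cases.

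Next I would lift that CW structure to a free $C_3$-CW structure on $X$ with cells $C_3\times D^n$ and collapse skeleta to obtain a cofiber sequence of $C_3$-spectra
\[
\Sigma^{\infty}\Sigma C_{3+}\xrightarrow{\ \varphi\ }\Sigma^{\infty}X^{(1)}_{+}\longrightarrow\Sigma^{\infty}X_{+}\longrightarrow\Sigma^{\infty}\Sigma^{2}C_{3+},
\]
where $X^{(1)}$ is the $1$-skeleton, a connected free $C_3$-graph, and $\varphi$ is the reduced attaching map of the top cell. Two facts feed the resulting long exact sequence. (i) A connected free $C_3$-graph retracts $C_3$-equivariantly onto a copy of $S^1_{\textup{free}}$ coming from a one-cell of $Y$ whose monodromy generates $C_3$ (one exists because the cover is connected), and each remaining one-cell contributes, cohomologically, a copy of $\Sigma C_{3+}$; with the computations of $H^{*,*}(S^1_{\textup{free}})$ and $H^{*,*}(C_3)$ from Section~\ref{preliminaries} this gives $H^{*,*}(X^{(1)})\cong H^{*,*}(S^1_{\textup{free}})\oplus(\Sigma^{1,0}H^{*,*}(C_3))^{\oplus r-1}$; in particular everything in sight is independent of the weight $q$ and agrees additively with $H^{*}_{\mathrm{sing}}(Y;\Z/3)$. (ii) In bidegree $(1,q)$ the map $\varphi^{*}$ is the $\Z/3$-dual of the cellular boundary of the top cell, hence is zero when the polygon word of $Y$ is a product of commutators (orientable case) and surjective when it is $\prod a_i^{2}$ (non-orientable case); I would verify this directly from the attaching words, and it is precisely the dichotomy in the statement.

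Finally I would assemble the two cases. When $X$ is non-orientable, $\varphi^{*}$ is surjective, so the long exact sequence forces $H^{2,q}(X)=0$ and identifies $H^{*,*}(X)$ with $\ker\varphi^{*}$; since $\Sigma^{1,0}H^{*,*}(C_3)$ does not occur as a module quotient of $H^{*,*}(S^1_{\textup{free}})$ (Section~\ref{preliminaries}), the surjectivity must consume exactly one $\Sigma^{1,0}H^{*,*}(C_3)$-summand, leaving $H^{*,*}(X)\cong H^{*,*}(S^1_{\textup{free}})\oplus(\Sigma^{1,0}H^{*,*}(C_3))^{\oplus r-2}$, which is the claimed answer since $r-2=\tfrac{\beta(X)-2}{3}$. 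When $X$ is orientable, $\varphi^{*}=0$ and the long exact sequence collapses to a short exact sequence of $\M_3$-modules
\[
0\longrightarrow\Sigma^{2,0}H^{*,*}(C_3)\longrightarrow H^{*,*}(X)\longrightarrow H^{*,*}(S^1_{\textup{free}})\oplus(\Sigma^{1,0}H^{*,*}(C_3))^{\oplus r-1}\longrightarrow 0 .
\]
The main obstacle is to identify the class of this extension: I expect the geometry of the attaching map to detect it through multiplication by the class in $\M_3^{1,1}$ that governs the linking in $H^{*,*}(S^1_{\textup{free}})$, so that $\Sigma^{2,0}H^{*,*}(C_3)$ splices together with exactly one $\Sigma^{1,0}H^{*,*}(C_3)$-summand into a copy of $\Sigma^{1,0}H^{*,*}(S^1_{\textup{free}})$ — namely the nonsplit extension $0\to\Sigma^{1,0}H^{*,*}(C_3)\to H^{*,*}(S^1_{\textup{free}})\to H^{*,*}(C_3)\to 0$ realized by $S^1_{\textup{free}}$ itself — while the remaining $r-2$ summands split off. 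Carrying this out requires the $\M_3$-module facts of Section~\ref{preliminaries} (in particular that the relevant $\mathrm{Ext}^{1}$-group is spanned by that one class) together with a change of basis in $H^{*,*}(X^{(1)})$; since $r-1=1+\tfrac{\beta(X)-2}{3}$, this yields the stated formula.
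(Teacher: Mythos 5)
Your approach is genuinely different from the paper's: you lift the standard one‐vertex CW structure on the quotient $Y=X/C_3$ and run a single cofiber sequence $X^{(1)}_+\to X_+\to\Sigma^{2}C_{3+}$, whereas the paper proceeds by induction on the equivariant connected sum decomposition of \cite{Pohl}, computing the base cases $M_1^{\textup{free}}$ and $N_2^{\textup{free}}$ directly and then transporting module structure to the general case by a commuting map of cofiber sequences. Much of your sketch checks out: the bookkeeping $r=2+\tfrac{\beta(X)-2}{3}$, the equivariant retraction of the free graph $X^{(1)}$ onto an $S^1_{\textup{free}}$ coming from a one‐cell of nontrivial monodromy (which gives $H^{*,*}(X^{(1)})\cong H^{*,*}(S^1_{\textup{free}})\oplus(\Sigma^{1,0}H^{*,*}(C_3))^{\oplus r-1}$), the identification of $d^{1,0}$ with the singular cellular differential of $Y$ via the quotient lemma, and the non‐orientable case (where the fact that every class in $H^{1,*}(S^1_{\textup{free}})$ is a $y$-multiple of an $H^{0,*}$-class shows $d$ kills the $H^{*,*}(S^1_{\textup{free}})$ summand, so exactly one $\Sigma^{1,0}H^{*,*}(C_3)$-summand is consumed).

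The orientable case, however, has a real gap. You correctly reduce to the short exact sequence
\[0\to\Sigma^{2,0}H^{*,*}(C_3)\to H^{*,*}(X)\to H^{*,*}(S^1_{\textup{free}})\oplus\bigl(\Sigma^{1,0}H^{*,*}(C_3)\bigr)^{\oplus r-1}\to 0,\]
but you never show that its class is nonzero, nor that it lies in the summand $\bigl(\mathrm{Ext}^1(\Sigma^{1,0}H^{*,*}(C_3),\Sigma^{2,0}H^{*,*}(C_3))\bigr)^{\oplus r-1}$ rather than partly in $\mathrm{Ext}^1(H^{*,*}(S^1_{\textup{free}}),\Sigma^{2,0}H^{*,*}(C_3))$. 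An $\mathrm{Ext}$-group computation — which, contrary to your citation, does not appear in Section~\ref{preliminaries} — only tells you which extensions are \emph{possible}; it cannot tell you which one the geometry realizes, and the answer $H^{*,*}(X)\cong H^{*,*}(S^1_{\textup{free}})\oplus\Sigma^{1,0}H^{*,*}(S^1_{\textup{free}})\oplus(\Sigma^{1,0}H^{*,*}(C_3))^{\oplus r-2}$ requires the class to be nonzero (the split option gives a different module). The paper supplies the missing geometric input at exactly this point: for $M_1^{\textup{free}}\cong S^1_{\textup{free}}\times S^{1,0}$ the projection to $S^1_{\textup{free}}$ produces the splitting, and for general $M^{\textup{free}}_{3g+1}$ the collapse map to $M_1^{\textup{free}}$ induces a map of cofiber sequences that pins down the $y$- and $z$-actions. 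Your sketch needs a comparable argument — for example, feeding the known $M_1^{\textup{free}}$ computation through a map to or from $X$, or using the equivariant map $X\to S^1_{\textup{free}}$ coming from the monodromy to control the $y$-extension — before the claimed change of basis step can be carried out.
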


\begin{theorem}\label{introthm2}
Let $X$ be a $C_3$-surface with $F(X)>0$.
\begin{enumerate}
\item If $X$ is orientable, then 
\[H^{*,*}(X)\cong \M_3\oplus \Sigma^{2,1}\M_3 \oplus \ebc^{\oplus F(X)-2}\oplus \left(\Sigma^{1,0}H^{*,*}(C_3)\right)^{\oplus \frac{\beta(X)-2F(X)+4}{3}}\]
\item If $X$ is non-orientable and $F(X)$ is even, then
\[H^{*,*}(X)\cong \M_3\oplus \ebc^{\oplus F(X)-2}\oplus \left(\Sigma^{1,0}H^{*,*}(C_3)\right)^{\oplus \frac{\beta(X)-2F(X)+1}{3}}\]
\item If $X$ is non-orientable and $F(X)$ is odd, then
\[H^{*,*}(X)\cong \M_3\oplus \ebc^{\oplus F(X)-1}\oplus\left(\Sigma^{1,0}H^{*,*}(C_3)\right)^{\oplus \frac{\beta(X)-2F(X)+1}{3}}\]
\end{enumerate}
\end{theorem}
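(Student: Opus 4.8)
The plan is to use the equivariant surgery classification of \cite{Pohl}, which presents every $C_3$-surface $X$ with $F(X)>0$ as obtained from a small base surface by a sequence of equivariant surgeries --- attaching free handles, the fixed-point-creating moves, and the moves that introduce non-orientability. Each such surgery cuts out an equivariant surface-with-boundary and glues in a standard piece, so it fits into an equivariant cofiber sequence relating $X$, the surface before the surgery, and an explicit difference $C_3$-space. I would then compute $H^{*,*}$ of the base surfaces directly and induct over the surgeries, using the four modules $\M_3$, $H^{*,*}(S^1_{\textup{free}})$, $H^{*,*}(C_3)$, $\ebc$ and the restriction and transfer maps among them from Section \ref{preliminaries} to read off, from the associated long exact sequences, how each surgery changes the cohomology.

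Two kinds of ingredient have to be assembled. For the base cases one computes directly: $S^2$ with its rotation action is the representation sphere $S^{\R^2_{\textup{rot}}}$, with reduced cohomology $\Sigma^{2,1}\M_3$, so $H^{*,*}(S^2)\cong \M_3\oplus\Sigma^{2,1}\M_3$; the non-orientable base surfaces (a projective plane with its rotation action and the relevant small surfaces built from it) are handled the same way, via representation spheres and the cofiber sequence $(S^1_{\textup{free}})_+\to S^0\to S^{\R^2_{\textup{rot}}}$. For the inductive step one identifies the difference space of each surgery up to equivariant homotopy: attaching a free handle produces a cofiber equivariantly equivalent to a wedge of copies of $C_{3+}\wedge S^1$, contributing $(\Sigma^{1,0}H^{*,*}(C_3))^{\oplus 2}$ and raising $\beta$ by $6$; a fixed-point-creating move produces a cofiber built from the unreduced suspension of $C_3$ --- the very space defining $\ebc$ --- contributing a copy of $\ebc$ and raising $F$ by $1$; and similarly for the remaining moves. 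In each case the connecting homomorphism and the relevant restriction maps are pinned down using the explicit $\M_3$-module maps of Section \ref{preliminaries}, and this forces the change in $H^{*,*}$. Equivalently, one may organize the computation around the single equivariant pushout $X = X_0\cup_{\bigsqcup_{F(X)}S^1_{\textup{free}}}\bigsqcup_{F(X)}D(\R^2_{\textup{rot}})$ coming from an equivariant tubular neighborhood of $X^{C_3}$, where $X_0$ is a compact free $C_3$-surface-with-boundary; this reduces the problem to computing $H^{*,*}(X_0)$ --- which the free-surface case above, applied after capping the boundary circles with free disks, identifies with $H^{*,*}(S^1_{\textup{free}})\oplus(\Sigma^{1,0}H^{*,*}(C_3))^{\oplus m}$, plus a $\Sigma^{1,0}H^{*,*}(S^1_{\textup{free}})$ summand when $X_0$ is orientable --- together with the Mayer--Vietoris sequence of the pushout.

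Finally one resolves the extension problems and does the bookkeeping. Each summand that appears --- $\M_3$, $\Sigma^{2,1}\M_3$, $\ebc$, $\Sigma^{1,0}H^{*,*}(C_3)$ --- is free or rigid enough as an $\M_3$-module that the relevant short exact sequences of $\M_3$-modules split, the splittings being induced by the inclusions of the standard pieces, and the number of summands of each type is turned into the stated closed formulas using the Riemann--Hurwitz relation $\chi(X)=3\chi(X/C_3)-2F(X)$ together with $\beta(X)=\dim_{\Z/2}H^1_{\text{sing}}(X;\Z/2)$ and the orientation type of $X$; the parity of $F(X)$ is precisely what distinguishes the two non-orientable subcases, since it controls whether a certain connecting map in the last surgery is surjective (equivalently, whether $X_0$ is orientable). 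I expect the main obstacle to be carrying out this last step \emph{at the level of $\M_3$-modules} rather than merely underlying groups: identifying the cokernels of the relevant maps as precisely the stated sums of $\M_3$, $\Sigma^{2,1}\M_3$, $\ebc$, and $\Sigma^{1,0}H^{*,*}(C_3)$, verifying the splittings, and checking that the resulting module is independent of the surgery presentation of $X$. This is where essentially all of the module-level bookkeeping lives, and where the interplay of $F(X)$, $\beta(X)$, and orientability with the $\ebc$-summands is genuinely delicate.
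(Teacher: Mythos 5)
Your overall strategy -- induct over the equivariant surgery presentation from \cite{Pohl}, assemble a cofiber sequence at each surgery, determine the differential in the long exact sequence using the quotient lemma, and then solve the resulting extension problem of $\M_3$-modules -- is essentially the route taken in the paper (compare Theorem \ref{classificationversion} and its case-by-case proof). The alternative you mention, organizing the computation around a single equivariant pushout $X = X_0\cup_{\sqcup S^1_{\text{free}}}\sqcup D(\R^2_{\text{rot}})$ and a Mayer--Vietoris sequence, is a genuinely different idea from the paper's many small inductions, and in principle it would more directly expose the role of $F(X)$; but as stated you have not removed the difficulty, only relocated it to a single larger extension problem.

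The genuine gap is in the sentence claiming that each summand is ``free or rigid enough as an $\M_3$-module that the relevant short exact sequences of $\M_3$-modules split.'' This is false as stated in two ways, and the theorem would come out wrong if one naively believed it. First, $\ebc$ is not free or projective over $\M_3$: showing that extensions involving $\ebc$ split is nontrivial and is the content of Lemmas \ref{EBextensions}, \ref{otherextensions}, and \ref{towerextensions}, which construct an explicit free resolution of $\ebc$ and compute the relevant $\operatorname{Ext}^1$ groups directly. You would need to prove these vanishing statements; they do not follow from ``rigidity.'' Second, and more importantly, in the non-orientable cases (Cases (3) and (4) of Theorem \ref{classificationversion}) the extension $0\to\coker(d)\to H^{*,*}(X)\to\ker(d)\to 0$ does \emph{not} split: $\ker(d)$ and $\coker(d)$ are truncated cone-shaped modules, and a nontrivial extension between them is exactly what reassembles the $\M_3$ summand (this is forced by Remark \ref{M3summand}, which guarantees $\M_3$ must be a direct summand whenever $X$ has a fixed point). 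If you assume all extensions are trivial you will not produce $\M_3$ from those pieces and the answer will be wrong. So the ``module-level bookkeeping'' you correctly flag as the hard part in fact requires (a) explicit $\operatorname{Ext}^1$-vanishing computations for $\ebc$, and (b) recognizing precisely which extensions are nontrivial and why, rather than a blanket splitting claim.

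One smaller issue: you assert that a fixed-point-creating move contributes a single copy of $\ebc$ and raises $F$ by $1$. In the paper's surgeries, a $C_3$-ribbon move raises $F$ by $2$ and contributes $\ebc^{\oplus 2}$ (one copy from the wedge summand of the cofiber, one from $\tilde H^{*,*}(EB)$ on the source side of the long exact sequence); and the $\Hex_1$-attaching move raises $F$ by $3$ and contributes $\ebc^{\oplus 3}$. These counts must be right to make the closed formulas come out, so this bookkeeping would need to be corrected before the induction closes.
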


We can quickly observe from these results that given any $C_3$-surface $X$, the Bredon cohomology of $X$ is completely determined by $\beta(X)$, $F(X)$, and whether or not $X$ is orientable. It is important to note however that Bredon cohomology does not provide a complete invariant for $C_3$-surfaces. For example, there exist spaces in the second and third groups of Theorem \ref{introthm2} whose cohomology are the same. There are nonisomorphic orientable surfaces with the same cohomology as well.

There is a potential concern that $\frac{\beta(X)-2F(X)+4}{3}$ and $\frac{\beta(X)-2F(X)+1}{3}$ may not be integers. However, for any space $X$ with a $C_3$-action, it must be that $F(X)\equiv 2-\beta(X)\pmod{3}$, so this is not an issue. Consequently, when the action on $X$ is free, it must be that $\frac{\beta(X)-2}{3}\in\Z$. A proof of these facts can be found in Chapter 4 of \cite{Pohl}. 

\subsection{Organization of the Paper}
We start with a discussion of important properties and computational tools for Bredon cohomology in Section \ref{preliminaries}. Section \ref{surfacesection} contains a summary of the classification result in \cite{Pohl}. Computations for the Bredon cohomology of all surfaces with free $C_3$-action appear in Section \ref{computationsI}, followed by cohomology computations for spaces with nonfree action in Section \ref{computationsII}.

\subsection{Acknowledgements}
%Dan, Christy, Robert
The work in this paper was a portion of the author's thesis project at the University of Oregon. The author would first like to thank her doctoral advisor Dan Dugger for his invaluable guidance and support. The author would also like to thank Chrsity Hazel and Clover May for countless helpful conversations.  % 

\section{Premilinaries on $RO(C_3)$-graded Bredon Chomology}\label{preliminaries}

In this section we discuss background knowledge and computational tools for $RO(G)$-graded Bredon Cohomology in the case $G=C_3$. This theory takes coefficients in a Mackey functor, so we begin with a discussion of Mackey functors and define the specific Mackey functor which will be used throughout the paper. We next discuss notation and terminology related to this theory and introduce several computational tools which will be used throughout this paper. This section ends with a few small computations which utilize these tools and introduce some of the key methods used in later computations.

\begin{definition}
A \textbf{Mackey Functor} $M$ for $G=C_3$ is the data of
\begin{center}
\tikzset{every picture/.style={line width=0.75pt}} %set default line width to 0.75pt        
\begin{tikzpicture}[x=0.75pt,y=0.75pt,yscale=-1,xscale=1]
%uncomment if require: 
%\path (0,300); %set diagram left start at 0, and has height of 300

%Curve Lines [id:da7697431591385713] 
\draw    (205.8,155.2) .. controls (175.26,195.58) and (154.43,104.02) .. (203.51,129.93) ;
\draw [shift={(205.8,131.2)}, rotate = 209.98] [fill={rgb, 255:red, 0; green, 0; blue, 0 }  ][line width=0.08]  [draw opacity=0] (10.72,-5.15) -- (0,0) -- (10.72,5.15) -- (7.12,0) -- cycle    ;
%Curve Lines [id:da6706452298469516] 
\draw    (257.8,128.2) .. controls (282.63,111.96) and (301.08,116.71) .. (317.49,126.75) ;
\draw [shift={(319.8,128.2)}, rotate = 212.91] [fill={rgb, 255:red, 0; green, 0; blue, 0 }  ][line width=0.08]  [draw opacity=0] (10.72,-5.15) -- (0,0) -- (10.72,5.15) -- (7.12,0) -- cycle    ;
%Curve Lines [id:da08716359854935152] 
\draw    (261.5,157.3) .. controls (286.06,175.7) and (303.43,168.71) .. (320.8,155.2) ;
\draw [shift={(258.8,155.2)}, rotate = 38.93] [fill={rgb, 255:red, 0; green, 0; blue, 0 }  ][line width=0.08]  [draw opacity=0] (10.72,-5.15) -- (0,0) -- (10.72,5.15) -- (7.12,0) -- cycle    ;

% Text Node
\draw (210,132) node [anchor=north west][inner sep=0.75pt]   [align=left] {$\displaystyle M( C_{3})$};
% Text Node
\draw (321,132) node [anchor=north west][inner sep=0.75pt]   [align=left] {$\displaystyle M( *)$};
% Text Node
\draw (282,95) node [anchor=north west][inner sep=0.75pt]   [align=left] {$\displaystyle p_{*}$};
% Text Node
\draw (284,172) node [anchor=north west][inner sep=0.75pt]   [align=left] {$\displaystyle p^{*}$};
% Text Node
\draw (138,121) node [anchor=north west][inner sep=0.75pt]   [align=left] {$\displaystyle t_{*} ,t^{*}$};
\end{tikzpicture}
\end{center}
where $M(C_3)$ and $M(*)$ are abelian groups, and $p^*$, $p_*$, $t^*$, and $t_*$ are homomorphisms that satisfy
\begin{enumerate}[i.]
\item $(t^*)^3=id$
\item $(t_*)^3=id$
\item $t^*p^*=p^*$
\item $p_*t_*=p_*$
\item $t_*t^*=id$
\item $p^*p_*=1+t^*+(t^*)^2$.
\end{enumerate}
\end{definition}

In this paper we will be primarily focused on \textbf{the constant $\mathbb{Z}/3$ Mackey functor}, which is denoted $\underline{\mathbb{Z}/3}$ and is defined by $M(C_3)=M(*)=\mathbb{Z}/3$, $p^*=t^*=t_*=id$, and $p_*=0$. 

\subsection{Bigraded Theory}

For a group $G$, the $RO(G)$-graded Bredon cohomology of a space with a $G$-action is graded on the Grothendieck group of real, orthogonal, finite-dimensional $G$-representations. In the case $G=C_3$, there are only two such irreducible $G$-representations up to isomorphism. These are the $1$-dimensional trivial representation $\mathbb{R}_{\text{triv}}$, and the $2$-dimensional representation given by rotation of the plane about the origin by $120^\circ$. We denote this representation by $\mathbb{R}^2_{\text{rot}}$.

Given a $C_3$-representation $V$, we can write $V=\left(\mathbb{R}_{\text{triv}}\right)^{\oplus p-2q}\oplus\left(\mathbb{R}^2_{\text{rot}}\right)^{\oplus q}$ where $p$ represents the total dimension of $V$ and $q$ represents the number of copies of $\mathbb{R}^2_{\text{rot}}$ in $V$. Notice that $V$ is completely determined by the values of $p$ and $q$, so $RO(C_3)$ is a rank $2$ free abelian group. In particular, we can write $H^{p,q}_{C_3}(X;M)$ to denote the $V$th cohomology group of $X$ in this theory. Note that the subscript of $C_3$ will be omitted when the context of $G=C_3$ is understood. We also let $\mathbb{R}^{p,q}$ denote the $C_3$-representation $\left(\mathbb{R}_{\text{triv}}\right)^{\oplus p-2q}\oplus\left(\mathbb{R}^2_{\text{rot}}\right)^{\oplus q}$ and the element $(p-2q)[\mathbb{R}_{\text{triv}}]+q[\mathbb{R}^2_{\text{rot}}]$ of $RO(C_3)$. 

Let $V$ be a real $G$-representation, and consider the space $\hat{V}$ obtained by one-point compactifying $V$ by adding a fixed point at infinity. The space $\hat{V}$ is equivalent to a sphere with a $G$-action. We call this a \textbf{representation sphere} and denote it by $S^V$.

We can then form the equivariant suspension 
\[\Sigma^V := S^V\wedge X\]
where $X$ is a $G$-space with a fixed base point. If $X$ is a free $G$-space, we can add a fixed base point to form the space $X_+:= X\sqcup\{*\}$. In general, the notation $X_+$ represents a $G$-space $X$ with a disjoint base point which is fixed under the action of $G$. 

For every finite-dimensional, real, orthogonal $G$-representation $V$, we get natural isomorphisms
\[\Sigma^V\colon \tilde{H}^\alpha_G(-;M)\rightarrow \tilde{H}_G^{\alpha+V}\left(\Sigma^V(-);M\right)\]
where coefficients are taken in the Mackey functor $M$. Given a cofiber sequence of based $G$-spaces
\[X\xrightarrow{f} Y \rightarrow C(f)\]
we get a Puppe sequence
\[X\rightarrow Y \rightarrow C(f) \rightarrow \Sigma^{\textbf{1}}X\rightarrow \Sigma^{\textbf{1}}Y \rightarrow \Sigma^{\textbf{1}}C(f) \rightarrow\cdots\]
where \textbf{1} represents the $1$-dimensional trivial representation of $G$. We can then use the suspension isomorphism to get a long exact sequence
\[\tilde{H}_G^V(X;M)\leftarrow \tilde{H}_G^V(Y;M)\leftarrow \tilde{H}_G^V(C(f);M)\leftarrow\tilde{H}_G^{V-\textbf{1}}(X;M) \leftarrow \tilde{H}_G^{V-\textbf{1}}(Y;M)\leftarrow\cdots\]
for each $V\in RO(G)$. 

In the case $G=C_3$, we know $V\cong \mathbb{R}^{p,q}$ for some $p$ and $q$. For brevity, we use $S^{p,q}$ to denote the representation sphere $S^{\mathbb{R}^{p,q}}$. Examples of representation spheres in this case can be found in Figure \ref{repspheres}. We use blue to denote points which are fixed under the action. We additionally use $\Sigma^{p,q}X$ to denote the $V$th suspension of a $C_3$-space $X$. This means there are isomorphisms
\[\Sigma^{p,q}\colon \tilde{H}^{a,b}(X;M) \rightarrow\tilde{H}^{a+p,b+q}(\Sigma^{p,q}X;M)\]
for all $p,q\geq 0$. Moreover, given a cofiber sequence of based $C_3$-spaces
\[X\xrightarrow{f}Y\rightarrow C(f)\]
we get a long exact sequence
\[\cdots \rightarrow \tilde{H}^{p,q}(Y;M)\rightarrow \tilde{H}^{p,q}(X;M)\xrightarrow{d^{p,q}} \tilde{H}^{p+1,q}(C(f);M) \rightarrow \tilde{H}^{p+1,q}(Y;M)\rightarrow \cdots\]
for each $q\in\mathbb{Z}$.

\begin{figure}
\begin{center}
\includegraphics[scale=.5]{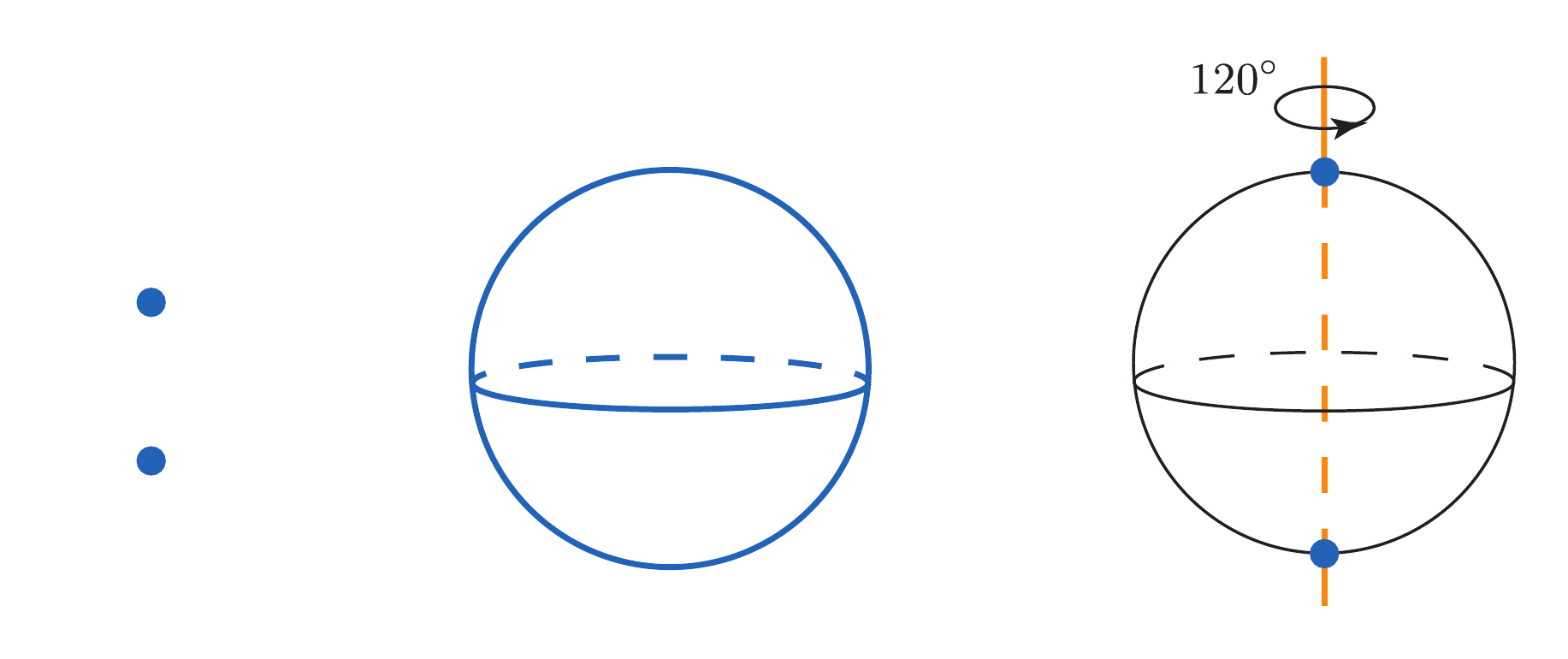}
\end{center}
\caption{\label{repspheres} The representation spheres $S^{0,0}$, $S^{2,0}$, and $S^{2,1}$, respectively.}
\end{figure}

\subsection{Cohomology of Orbits}

Here we give the cohomology of $C_3/C_3=\text{pt}$ and the free orbit $C_3$ in constant $\underline{\mathbb{Z}/3}$ coefficients. These computations have been done in \cite{Lew88}, so we just give the ring structure below.

Let $\mathbb{M}_3$ denote the ring $H^{*,*}(\text{pt};\underline{\mathbb{Z}/3})$ which is depicted in Figure \ref{M3}. The $(p,q)$ spot on the grid denotes the cohomology group $H^{p,q}(\text{pt};\underline{\mathbb{Z}/3})$, and each dot represents a copy of $\mathbb{Z}/3$. Solid lines indicate ring structure as we explain below. We use the convention that the $(p,q)$th entry is plotted above and to the right of the $(p,q)$th coordinate.

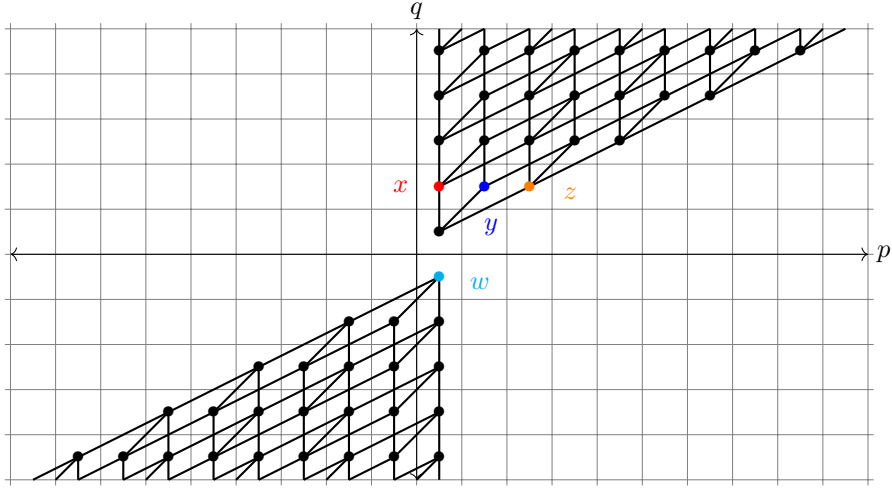
\begin{figure}
\begin{center}
	\begin{tikzpicture}[scale=0.6]
		\draw[help lines] (-9.125,-5.125) grid (10.125, 5.125);
		\draw[<->] (-9,0)--(10,0)node[right]{$p$};
		\draw[<->] (0,-5)--(0,5)node[above]{$q$};
		\cone{0}{0}{black};
		\draw (1/2,1/2) node{$\bullet$};
		\draw (1/2,5/2) node{$\bullet$};
		\draw (1/2,7/2) node{$\bullet$};
		\draw (1/2,9/2) node{$\bullet$};
		\draw (1/2,-3/2) node{$\bullet$};
		\draw (1/2,-5/2) node{$\bullet$};
		\draw (1/2,-7/2) node{$\bullet$};
		\draw (1/2,-9/2) node{$\bullet$};
		\draw (3/2,5/2) node{$\bullet$};
		\draw (3/2,7/2) node{$\bullet$};
		\draw (3/2,9/2) node{$\bullet$};
		\draw (5/2,5/2) node{$\bullet$};
		\draw (5/2,7/2) node{$\bullet$};
		\draw (5/2,9/2) node{$\bullet$};
		\draw (7/2,5/2) node{$\bullet$};
		\draw (7/2,7/2) node{$\bullet$};
		\draw (7/2,9/2) node{$\bullet$};
		\draw (9/2,5/2) node{$\bullet$};
		\draw (9/2,7/2) node{$\bullet$};
		\draw (9/2,9/2) node{$\bullet$};
		\draw (11/2,7/2) node{$\bullet$};
		\draw (11/2,9/2) node{$\bullet$};
		\draw (13/2,7/2) node{$\bullet$};
		\draw (13/2,9/2) node{$\bullet$};
		\draw (15/2,9/2) node{$\bullet$};
		\draw (17/2,9/2) node{$\bullet$};
		\draw (-1/2,-3/2) node{$\bullet$};
		\draw (-1/2,-5/2) node{$\bullet$};
		\draw (-1/2,-7/2) node{$\bullet$};
		\draw (-1/2,-9/2) node{$\bullet$};
		\draw (-3/2,-3/2) node{$\bullet$};
		\draw (-3/2,-5/2) node{$\bullet$};
		\draw (-3/2,-7/2) node{$\bullet$};
		\draw (-3/2,-9/2) node{$\bullet$};
		\draw (-5/2,-5/2) node{$\bullet$};
		\draw (-5/2,-7/2) node{$\bullet$};
		\draw (-5/2,-9/2) node{$\bullet$};
		\draw (-7/2,-5/2) node{$\bullet$};
		\draw (-7/2,-7/2) node{$\bullet$};
		\draw (-7/2,-9/2) node{$\bullet$};
		\draw (-9/2,-7/2) node{$\bullet$};
		\draw (-9/2,-9/2) node{$\bullet$};
		\draw (-11/2,-7/2) node{$\bullet$};
		\draw (-11/2,-9/2) node{$\bullet$};
		\draw (-13/2,-9/2) node{$\bullet$};
		\draw (-15/2,-9/2) node{$\bullet$};
		\draw[thick] (3/2,3/2) -- (3/2,5);
		\draw[thick] (5/2,3/2) -- (5/2,5);
		\draw[thick] (7/2,5/2) -- (7/2,5);
		\draw[thick] (9/2,5/2) -- (9/2,5);
		\draw[thick] (11/2,7/2) -- (11/2,5);
		\draw[thick] (13/2,7/2) -- (13/2,5);
		\draw[thick] (-1/2,-3/2) -- (-1/2,-5);
		\draw[thick] (-3/2,-3/2) -- (-3/2,-5);
		\draw[thick] (-5/2,-5/2) -- (-5/2,-5);
		\draw[thick] (-7/2,-5/2) -- (-7/2,-5);
		\draw[thick] (-9/2,-7/2) -- (-9/2,-5);
		\draw[thick] (-11/2,-7/2) -- (-11/2,-5);
		\draw[thick] (15/2,9/2) -- (15/2,5);
		\draw[thick] (17/2,9/2) -- (17/2,5);
		\draw[thick] (-13/2,-9/2) -- (-13/2,-5);
		\draw[thick] (-15/2,-9/2) -- (-15/2,-5);
		\draw[thick] (1/2,3/2) -- (15/2,5);
		\draw[thick] (1/2,5/2) -- (11/2,5);
		\draw[thick] (1/2,7/2) -- (7/2,5);
		\draw[thick] (1/2,9/2) -- (3/2,5);
		\draw[thick] (3/2,3/2) -- (17/2,5);
		\draw[thick] (3/2,5/2) -- (13/2,5);
		\draw[thick] (3/2,7/2) -- (9/2,5);
		\draw[thick] (3/2,9/2) -- (5/2,5);
		\draw[thick] (1/2,1/2) -- (3/2,3/2);
		\draw[thick] (1/2,3/2) -- (3/2,5/2);
		\draw[thick] (1/2,5/2) -- (3/2,7/2);
		\draw[thick] (1/2,7/2) -- (3/2,9/2);
		\draw[thick] (1/2,9/2) -- (1,5);
		\draw[thick] (5/2,3/2) -- (7/2,5/2);
		\draw[thick] (5/2,5/2) -- (7/2,7/2);
		\draw[thick] (5/2,7/2) -- (7/2,9/2);
		\draw[thick] (5/2,9/2) -- (3,5);
		\draw[thick] (9/2,5/2) -- (11/2,7/2);
		\draw[thick] (9/2,7/2) -- (11/2,9/2);
		\draw[thick] (9/2,9/2) -- (5,5);
		\draw[thick] (13/2,7/2) -- (15/2,9/2);
		\draw[thick] (13/2,9/2) -- (7,5);
		\draw[thick] (17/2,9/2) -- (9,5);
		\draw[thick] (1/2,-3/2) -- (-13/2,-5);
		\draw[thick] (1/2,-5/2) -- (-9/2,-5);
		\draw[thick] (1/2,-7/2) -- (-5/2,-5);
		\draw[thick] (1/2,-9/2) -- (-1/2,-5);
		\draw[thick] (-1/2,-3/2) -- (-15/2,-5);
		\draw[thick] (-1/2,-5/2) -- (-11/2,-5);
		\draw[thick] (-1/2,-7/2) -- (-7/2,-5);
		\draw[thick] (-1/2,-9/2) -- (-3/2,-5);
		\draw[thick] (1/2,-1/2) -- (-1/2,-3/2);
		\draw[thick] (1/2, -3/2) -- (-1/2,-5/2);
		\draw[thick] (1/2,-5/2) -- (-1/2,-7/2);
		\draw[thick] (1/2,-7/2) -- (-1/2,-9/2);
		\draw[thick] (1/2,-9/2) -- (0,-5);
		\draw[thick] (-3/2,-3/2) -- (-5/2,-5/2);
		\draw[thick] (-3/2,-5/2) -- (-5/2,-7/2);
		\draw[thick] (-3/2,-7/2) -- (-5/2,-9/2);
		\draw[thick] (-3/2,-9/2) -- (-2,-5);
		\draw[thick] (-7/2,-5/2) -- (-9/2,-7/2);
		\draw[thick] (-7/2,-7/2) -- (-9/2,-9/2);
		\draw[thick] (-7/2,-9/2) -- (-4,-5);
		\draw[thick] (-11/2,-7/2) -- (-13/2,-9/2);
		\draw[thick] (-11/2,-9/2) -- (-6,-5);
		\draw[thick] (-15/2,-9/2) -- (-8,-5);
		\draw[orange] (5/2,3/2) node{$\bullet$};
		\draw[blue] (3/2,3/2) node{$\bullet$};
		\draw[red] (1/2,3/2) node{$\bullet$};
		\draw[orange] (3.4,1.35) node{$z$};
		\draw[red] (-.35,1.5) node{$x$};
		\draw[blue] (1.65,.6) node{$y$};
		\draw[cyan] (1/2,-1/2) node{$\bullet$};
		\draw[cyan] (1.4,-.65) node{$w$};
	\end{tikzpicture}
	\end{center}
\caption{\label{M3} The ring $\mathbb{M}_3=H^{*,*}(\text{pt})$.}
\end{figure}

We will refer to the portion above the $p$-axis as the ``top cone'' and the portion below as the ``bottom cone''. The top cone is isomorphic to the polynomial ring $\mathbb{Z}/3[x,y,z]/(y^2)$ where $x$ is a generator of $\mathbb{Z}/3$ in degree $(0,1)$, $y$ is a generator in degree $(1,1)$, and $z$ is a generator in degree $(2,1)$. Multiplication by $x$ is denoted by vertical lines, multiplication by $y$ is denoted by lines of slope $1$, and multiplication by $z$ is denoted by lines of slope $1/2$.

The generator $w$ in degree $(0,-1)$ is infinitely divisible by $x$ and $z$ and is divisible by $y$. For example, there is an element denoted $\frac{w}{x}$ in degree $(0,-2)$ with the property that $x\cdot \frac{w}{x}=w$. More generally, all nonzero elements of the bottom cone are of the form $\pm\frac{w}{x^ky^iz^\ell}$ for some $k,\ell\in\mathbb{N}$ and $i\in\{0,1\}$.

Going forward we will use an abbreviated picture for $\mathbb{M}_3$ which we can see in Figure \ref{simpleM3}. Although this simpler version allows us to keep our diagrams from getting too busy, we are leaving out a lot of information about the ring structure.

\begin{figure}
\begin{minipage}{0.45\textwidth}
\begin{center}
\begin{tikzpicture}[scale=0.6]
	\draw[help lines] (-3.125,-5.125) grid (5.125, 5.125);
	\draw[<->] (-3,0)--(5,0)node[right]{$p$};
	\draw[<->] (0,-5)--(0,5)node[above]{$q$};;
	\scone{0}{0}{black};
\end{tikzpicture}
\end{center}
\end{minipage}\ \begin{minipage}{0.45\textwidth}
\begin{center}
\begin{tikzpicture}[scale=0.6]
	\draw[help lines] (-3.125,-5.125) grid (5.125, 5.125);
	\draw[<->] (-3,0)--(5,0)node[right]{$p$};
	\draw[<->] (0,-5)--(0,5)node[above]{$q$};;
	\scone{2}{1}{black};
\end{tikzpicture}
\end{center}
\end{minipage}
\caption{\label{simpleM3} Abbreviated representations of $\mathbb{M}_3$ and $\Sigma^{2,1}\mathbb{M}_3$, respectively.}
\end{figure}
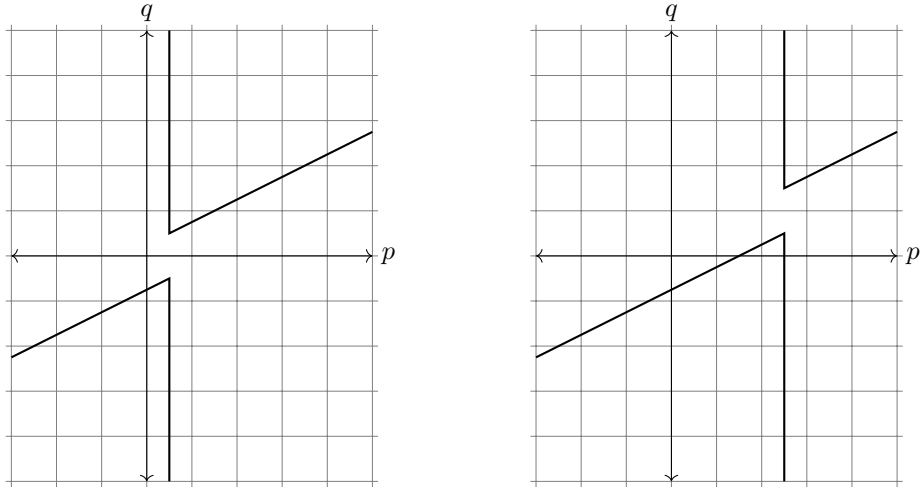

Given any $C_3$-space $X$, there is an equivariant map $X\rightarrow \text{pt}$ sending everything to a fixed point. We then get an induced map $\mathbb{M}_3\rightarrow H^{*,*}(X;\underline{\mathbb{Z}/3})$ so that $H^{*,*}(X;\underline{\mathbb{Z}/3})$ can be made into an $\mathbb{M}_3$-module for any $C_3$-space $X$. In this paper, we will utilize this structure and compute the cohomology of all non-trivial, closed $C_3$-surfaces as modules over $\mathbb{M}_3$. 

We next consider the free orbit $C_3$. As an $\mathbb{M}_3$-module, the cohomology of $C_3$ is isomorphic to $x^{-1}\mathbb{M}_3/(y,z)$. The module $H^{*,*}(C_3;\underline{\mathbb{Z}/3})$ is given on the left of Figure \ref{cthree} with an abbreviated picture on the right which we will use in future computations.

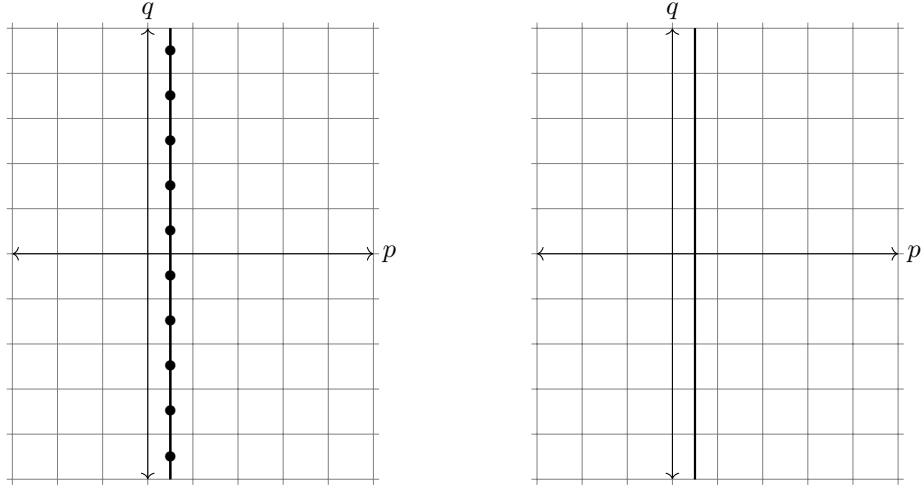
\begin{figure}
\begin{minipage}{0.45\textwidth}
\begin{center}
\begin{tikzpicture}[scale=0.6]
	\draw[help lines] (-3.125,-5.125) grid (5.125, 5.125);
	\draw[<->] (-3,0)--(5,0)node[right]{$p$};
	\draw[<->] (0,-5)--(0,5)node[above]{$q$};;
	\cthree{0}{0}{black};
	\draw (0.5,0.5) node{$\bullet$};
	\draw (0.5,1.5) node{$\bullet$};
	\draw (0.5,2.5) node{$\bullet$};
	\draw (0.5,3.5) node{$\bullet$};
	\draw (0.5,4.5) node{$\bullet$};
	\draw (0.5,-0.5) node{$\bullet$};
	\draw (0.5,-1.5) node{$\bullet$};
	\draw (0.5,-2.5) node{$\bullet$};
	\draw (0.5,-3.5) node{$\bullet$};
	\draw (0.5,-4.5) node{$\bullet$};
\end{tikzpicture}
\end{center}
\end{minipage}\ \begin{minipage}{0.45\textwidth}
\begin{center}
\begin{tikzpicture}[scale=0.6]
	\draw[help lines] (-3.125,-5.125) grid (5.125, 5.125);
	\draw[<->] (-3,0)--(5,0)node[right]{$p$};
	\draw[<->] (0,-5)--(0,5)node[above]{$q$};;
	\cthree{0}{0}{black};
\end{tikzpicture}
\end{center}
\end{minipage}
\caption{\label{cthree} The module $H^{*,*}(C_3)$ (left) and an abbreviated representation (right).}
\end{figure}

\subsection{Computational Tools}

We now introduce several properties relating $RO(C_3)$-graded Bredon cohomology to singular cohomolgy which will become extremely useful in later computations.

\begin{lemma}
[The quotient lemma] Let $X$ be a finite $C_3$-CW complex. We have the following isomorphism for all $p$:
\[H^{p,0}(X;\underline{\mathbb{Z}/3})\cong H^{p,0}(X/C_3;\underline{\mathbb{Z}/3})\cong H^p_{\textup{sing}}(X/C_3;\mathbb{Z}/3).\]
\end{lemma}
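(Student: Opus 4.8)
The plan is to identify the $q=0$ part of $RO(C_3)$-graded Bredon cohomology with classical Bredon cohomology (the integer-graded theory), and then use the special structure of the constant Mackey functor $\underline{\Z/3}$ to reduce to singular cohomology of the quotient. First I would recall that for any $G$-space $X$ and any Mackey functor $M$, the $(p,0)$ component $H^{p,0}_G(X;M)$ of the $RO(G)$-graded theory agrees with the classical $\Z$-graded Bredon cohomology $H^p_G(X;\underline{M})$ computed with coefficients in the underlying coefficient system of $M$ — this is because the $0$-sphere $S^{0,0}$ is the usual $0$-sphere with trivial action, so the cellular cochain complex computing $H^{*,0}$ is exactly the Bredon cochain complex built from the fixed-point data of the $C_3$-CW structure. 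So the content is really a statement about ordinary Bredon cohomology with constant coefficients.

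Next I would invoke the standard fact that for the constant coefficient system $\underline{A}$ (here $A=\Z/3$), ordinary Bredon cohomology of a $G$-CW complex $X$ coincides with the singular cohomology of the orbit space: $H^p_G(X;\underline{A})\cong H^p_{\text{sing}}(X/G;A)$. The reason is that the Bredon cochain complex with constant coefficients, in cellular degree $n$, is $\prod_{\text{$n$-cells } \sigma} A$ where the product is over $G$-orbits of equivariant $n$-cells, with differentials induced by the (constant) coefficient system; this is visibly the cellular cochain complex of the CW complex $X/G$, whose cells are exactly the $G$-orbits of cells of $X$, with $\Z/3$ coefficients. Passing from cellular to singular cohomology of $X/G$ is the usual comparison theorem. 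This gives the second isomorphism in the statement; the first isomorphism $H^{p,0}(X;\underline{\Z/3})\cong H^{p,0}(X/C_3;\underline{\Z/3})$ is then the same statement read for the trivial-action space $X/C_3$ (where equivariant and nonequivariant cohomology agree) combined with the naturality of the quotient map, or can simply be phrased as: both sides equal $H^p_{\text{sing}}(X/C_3;\Z/3)$.

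I expect the main obstacle — or rather the main point requiring care — is setting up the $C_3$-CW structure and checking that the Bredon cochain complex genuinely matches the cellular cochain complex of $X/C_3$: one must be careful about the two types of $C_3$-cells (free cells $C_3\times D^n$, which contribute a single $\Z/3$ since $\underline{\Z/3}(C_3)=\Z/3$, and fixed cells $D^n$, which also contribute a single $\Z/3$), and verify that in both cases the attaching maps induce exactly the differentials of the quotient CW structure, using that $p^*$ and $t^*$ are identity maps and $p_*=0$ for $\underline{\Z/3}$. Once that bookkeeping is done the isomorphism is immediate. If the paper prefers to avoid the explicit chain-level argument, an alternative is to cite the general result (e.g. from the foundational references on Bredon cohomology) that constant-coefficient Bredon cohomology computes singular cohomology of the quotient, and then note that $H^{p,0}$ is by definition the classical Bredon group; I would likely present the chain-level argument briefly since it also makes transparent why the identification is natural in $X$, which will be needed when applying the lemma to cofiber sequences later.
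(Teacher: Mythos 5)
Your proof is correct, but it takes a genuinely different route from the one the paper sketches. The paper's argument is at the level of cohomology theories: one observes that the quotient map $X\to X/C_3$ induces a natural transformation $H^{p,0}(X/C_3;\underline{\Z/3})\to H^{p,0}(X;\underline{\Z/3})$ of integer-graded equivariant cohomology theories in the variable $X$, and then checks it is an isomorphism (which by standard arguments reduces to verifying it on the orbits $C_3/C_3=\mathrm{pt}$ and $C_3$; on the free orbit both sides are $\Z/3$ concentrated in degree $p=0$, and on the point both sides are $H^*_{\mathrm{sing}}(\mathrm{pt};\Z/3)$). Your proof instead works at the chain level, identifying $H^{p,0}$ with classical integer-graded Bredon cohomology and then directly matching the Bredon cochain complex with constant coefficient system $\underline{\Z/3}$ to the cellular cochain complex of $X/C_3$, using that $\underline{\Z/3}(C_3)=\underline{\Z/3}(*)=\Z/3$ and $p^*=t^*=\mathrm{id}$. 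Both routes are valid; the paper's is shorter because it leans on the axiomatics of equivariant cohomology theories, while yours is more explicit about the mechanism and makes the naturality in $X$ (needed later when this lemma is fed into maps of long exact sequences) visible from the outset. You also correctly flag the one place requiring real care, namely checking that the Bredon differential agrees with the cellular differential of the quotient for both cell types, which is precisely the content behind the paper's "quick to check."
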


A proof for the analogous statement in the $G=C_2$ case is nearly identical to that of the $C_3$ case and can be found in \cite{Haz19a}, but we will briefly summarize the main idea. The map $X\to X/C_3$ induces $H^{p,0}(X;\underline{\Z/3})\leftarrow H^{p,0}(X/C_3\underline{\Z/3})$, and it is quick to check that this induced map is an isomorphism of integer-graded equivariant cohomology theories.

\begin{lemma}\label{times C_3}
Let $Y$ be a non-equivariant space. The cohomology of the free $C_3$-space $C_3\times Y$ is given by
\[H^{*,*}(C_3\times Y;\underline{\mathbb{Z}/3})\cong \mathbb{Z}/3[x,x\inv]\otimes_{\mathbb{Z}/3} H^*_{\textup{sing}}(Y;\mathbb{Z}/3)\]
as $\mathbb{M}_3$-modules.
\end{lemma}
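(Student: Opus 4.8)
The plan is to compute the cohomology of the free $C_3$-space $C_3 \times Y$ by reducing everything to the integer-graded case and then sweeping out the $q$-direction using the suspension isomorphism for $\mathbb{R}^2_{\text{rot}}$. First I would observe that $C_3 \times Y$ is a free $C_3$-space with quotient $(C_3 \times Y)/C_3 \cong Y$. Applying the quotient lemma gives
\[
H^{p,0}(C_3 \times Y; \underline{\Z/3}) \cong H^p_{\text{sing}}(Y; \Z/3)
\]
for all $p$, which handles the entire $q = 0$ line. This already matches the claimed answer in degrees $(p,0)$, since the right-hand side $\Z/3[x,x\inv] \otimes H^*_{\text{sing}}(Y;\Z/3)$ is $H^*_{\text{sing}}(Y;\Z/3)$ in degrees with $q=0$ (here $x$ has degree $(0,1)$).

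Next I would propagate this to all $q \in \Z$. The key input is that on a free $C_3$-space, smashing with the representation sphere $S^{\mathbb{R}^2_{\text{rot}}} = S^{2,1}$ should act invertibly after adding a disjoint basepoint — more precisely, $S^{2,1} \wedge (C_3 \times Y)_+$ is $C_3$-equivariantly equivalent to $S^{2,0} \wedge (C_3 \times Y)_+$, because over a free orbit the rotation representation restricted to the (trivial-action-up-to-translation) space $C_3$ is, after the appropriate untwisting, just $\R^2$ with trivial action; concretely, $\mathbb{R}^2_{\text{rot}}|_{C_3} \cong \mathbb{R}^2_{\text{triv}}|_{C_3}$ as $C_3$-spaces over the free orbit. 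This gives an isomorphism
\[
H^{p,q}(C_3 \times Y) \cong H^{p-2,q-1}(C_3 \times Y)
\]
for all $p,q$ (using the $\Sigma^{2,1}$-suspension isomorphism together with this untwisting, or equivalently the fact that $x$ acts invertibly). Iterating, every group $H^{p,q}$ is identified with some $H^{p',0} \cong H^{p'}_{\text{sing}}(Y;\Z/3)$ where $p' = p - 2q$, which is exactly the bigraded module $\Z/3[x,x\inv] \otimes_{\Z/3} H^*_{\text{sing}}(Y;\Z/3)$ with $x$ the invertible generator in degree $(0,1)$ pulled back from $H^{0,1}(\text{pt})$.

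Finally I would check that this isomorphism is one of $\M_3$-modules: multiplication by $x \in H^{0,1}(\text{pt})$ corresponds under these identifications to the periodicity isomorphism just constructed, so it is invertible on $H^{*,*}(C_3 \times Y)$, while the classes $y \in H^{1,1}(\text{pt})$ and $z \in H^{2,1}(\text{pt})$ act by zero — this follows because $y$ and $z$ are already zero in $H^{*,*}(C_3)$ (recall $H^{*,*}(C_3) \cong x\inv \M_3/(y,z)$), and the $\M_3$-module structure on $H^{*,*}(C_3 \times Y)$ factors through restriction along $C_3 \times Y \to C_3$ compatibly. I expect the main obstacle to be making the untwisting $\mathbb{R}^2_{\text{rot}}|_{C_3} \cong \mathbb{R}^2_{\text{triv}}|_{C_3}$ precise at the level of based $C_3$-spaces (equivalently, verifying the $x$-periodicity cleanly) rather than merely plausible; once that periodicity is in hand, the rest is bookkeeping with the quotient lemma and the known module structure on $H^{*,*}(C_3)$. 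An alternative route to the same end, avoiding the explicit untwisting, is to note that $C_3 \times Y \simeq C_3 \wedge Y_+$ (as based spaces after adding basepoints) and use a cellular filtration of $Y$ together with $H^{*,*}(C_3 \times D^n, C_3 \times S^{n-1}) \cong \Sigma^{n} H^{*,*}(C_3)$ and the already-known module $H^{*,*}(C_3)$, assembling the answer via the resulting Atiyah–Hirzebruch-type spectral sequence, which collapses because $H^{*,*}(C_3)$ is concentrated in a single $p$-column per $q$.
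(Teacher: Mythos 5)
Your overall strategy — establish the $q=0$ line via the quotient lemma and then propagate to all $q$ using $x$-periodicity over the free orbit — is a valid alternative to the paper's. The paper instead uses the projection $C_3\times Y\to C_3$ to build a natural transformation of cohomology theories in $Y$, namely $\bigl[\Z/3[x,x^{-1}]\otimes_{\Z/3} H^{*,0}(C_3\times Y)\bigr]^q\to H^{*,q}(C_3\times Y)$, and checks it is an isomorphism on a point; no geometric untwisting of $\R^2_{\text{rot}}$ is needed there. Both routes (and your cellular-filtration alternative) can be made to work.

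However, your bookkeeping on the periodicity shift is wrong, and the error is not merely cosmetic. You assert $H^{p,q}(C_3\times Y)\cong H^{p-2,q-1}(C_3\times Y)$ and iterate to $H^{p,q}\cong H^{p-2q}_{\text{sing}}(Y)$. The correct shift is by $(0,1)$, not $(2,1)$. From $S^{2,1}\wedge(C_3\times Y)_+\simeq S^{2,0}\wedge(C_3\times Y)_+$ and the suspension isomorphism $\tilde H^{c,d}(\Sigma^{a,b}(-))\cong\tilde H^{c-a,d-b}(-)$, one gets $\tilde H^{c-2,d-1}((C_3\times Y)_+)\cong\tilde H^{c-2,d}((C_3\times Y)_+)$, that is $H^{p,q}(C_3\times Y)\cong H^{p,q+1}(C_3\times Y)$ — equivalently, multiplication by $x\in H^{0,1}(\text{pt})$ is invertible and shifts only the $q$-grading. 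Iterating the corrected isomorphism yields $H^{p,q}(C_3\times Y)\cong H^{p,0}(C_3\times Y)\cong H^p_{\text{sing}}(Y)$, which is what actually agrees with $\Z/3[x,x^{-1}]\otimes_{\Z/3}H^*_{\text{sing}}(Y)$ when $|x|=(0,1)$: the $(p,q)$ piece of that module is $x^q\otimes H^p_{\text{sing}}(Y)\cong H^p_{\text{sing}}(Y)$, not $H^{p-2q}_{\text{sing}}(Y)$. You have conflated the degree $(2,1)$ of the representation sphere $S^{2,1}$ with the degree $(0,1)$ of $x$, and as written your identification does not match the claimed answer — already for $Y=\text{pt}$ it would place the nonzero groups along $p=2q$ rather than along $p=0$, contradicting the known $H^{*,*}(C_3)$. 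Once the shift is corrected the argument goes through; the untwisting of $\R^2_{\text{rot}}$ over a free $C_3$-space that you flag as the main worry is a standard fact (equivariant vector bundles over a free $G$-space are pulled back from the quotient, and the quotient of $\R^2_{\text{rot}}\times C_3\times Y$ is the trivial plane bundle on $Y$).
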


\begin{proof}
For this proof, all coefficients are understood to be $\underline{\mathbb{Z}/3}$, so we will suppress the notation.

 The equivariant map $C_3\times Y\rightarrow C_3$ sending each copy of $Y$ to a single point induces a map $H^{*,*}(C_3)\rightarrow H^{*,*}(C_3\times Y)$. Since $H^{*,*}(C_3)\cong\mathbb{Z}/3[x,x^{-1}]$, we can make $H^{*,*}(C_3\times Y)$ a graded algebra over $\mathbb{Z}/3[x,x^{-1}]$. This means there exist natural maps
\[\mathbb{Z}/3[x,x^{-1}]\otimes_{\mathbb{Z}/3} H^{p,0}(C_3\times Y)\rightarrow H^{p,q}(C_3\times Y).\]
Restricting to the $q$th graded piece gives us a map
\[\left[\mathbb{Z}/3[x,x^{-1}]\otimes_{\mathbb{Z}/3} H^{*,0}(C_3\times Y)\right]^q\rightarrow H^{*,q}(C_3\times Y)\]
which is natural in $Y$. In particular, this is a map of cohomology theories. We can quickly see that this map is an isomorphism on both $Y=\text{pt}$ and $Y=C_3$, which means it defines an isomorphism of equivariant cohomology theories for each $q$.

We know from the quotient lemma that $H^{*,0}(C_3\times Y)\cong H^*_{\text{sing}}(Y)$ for all $Y$, so we have isomorphisms $\mathbb{Z}/3[x,x^{-1}]\otimes_{\mathbb{Z}/3} H^*_{\text{sing}}(Y)\rightarrow H^{*,*}(C_3\times Y)$ for each $q$th graded piece. Together, these give us an isomorphism of $\mathbb{M}_3$-modules, and the result follows. 
\end{proof}

Another useful tool to aid us in computations is the \textbf{forgetful map}. Let $X$ be a pointed $C_3$-space. For every integer $q$, we have map
\[\tilde{H}^{p,q}(X;\underline{\mathbb{Z}/3})\xrightarrow{\psi} \tilde{H}^p_{\text{sing}}(X;\mathbb{Z}/3).\]
To understand this map, for each $V\cong \mathbb{R}^{p,q}$, we define $H^{p,q}(X;\underline{\mathbb{Z}/3})$ as maps from $X$ to the equivariant Eilenberg-MacLane space $K(\underline{\mathbb{Z}/3},p,q)$. Forgetting this equivariant structure leaves us with a map from the underlying topological space $X$ to the Eilenberg-MacLane space $K(\mathbb{Z}/3,p)$.

% \subsection{First Examples}

% We will now use these tools to [GENERICALLY TALK ABOUT WHAT WE'LL DO IN THIS SUBSECTION -- MAYBE GIVE ALL EXAMPLES LITTLE TITLES/LABLES] All coefficients are understood to be $\underline{\mathbb{Z}/3}$, so the coefficient notation will be suppressed. 

%%%%%%%%%%%%%free S^1 example %%%%%%%%%%%%%%
\begin{example} We will now use these tools to compute the cohomology of the freely rotating cirlce, $S^1_{\text{free}}$. All coefficients are understood to be $\underline{\mathbb{Z}/3}$, so the coefficient notation will be suppressed.We begin by constructing a cofiber sequence
\[{C_3}_+ \hookrightarrow {S^1_{\text{free}}}_+ \rightarrow S^{1,0}\wedge {C_3}_+\]
which is illustrated in Figure \ref{freespherecofibersequence}. This cofiber sequence gives rise to a long exact sequence on cohomology:
\[\cdots \rightarrow \tilde{H}^{p,q}(S^{1,0}\wedge {C_3}_+)\rightarrow H^{p,q}(S^1_{\text{free}})\rightarrow H^{p,q}(C_3)\xrightarrow{d^{p,q}} \tilde{H}^{p+1,q}(S^{1,0}\wedge {C_3}_+)\rightarrow\cdots\]
for each value of $q$. The total differential of these long exact sequences $d=\bigoplus_{p,q}d^{p,q}$ is an $\mathbb{M}_3$-module map, so we can understand $H^{*,*}(S^1_{\text{free}})$ by computing the total differential and solving the extension problem
\[0\rightarrow \coker(d)\rightarrow H^{*,*}(S^1_{\text{free}})\rightarrow \ker(d)\rightarrow 0.\] 
Figure \ref{S^1_f differential} shows all possible nonzero differential maps
\[d^{p,q}\colon H^{p,q}(C_3)\rightarrow \tilde{H}^{p+1,q}(S^{1,0}\wedge {C_3}_+).\]

\begin{figure}
\begin{center}
\includegraphics[scale=.75]{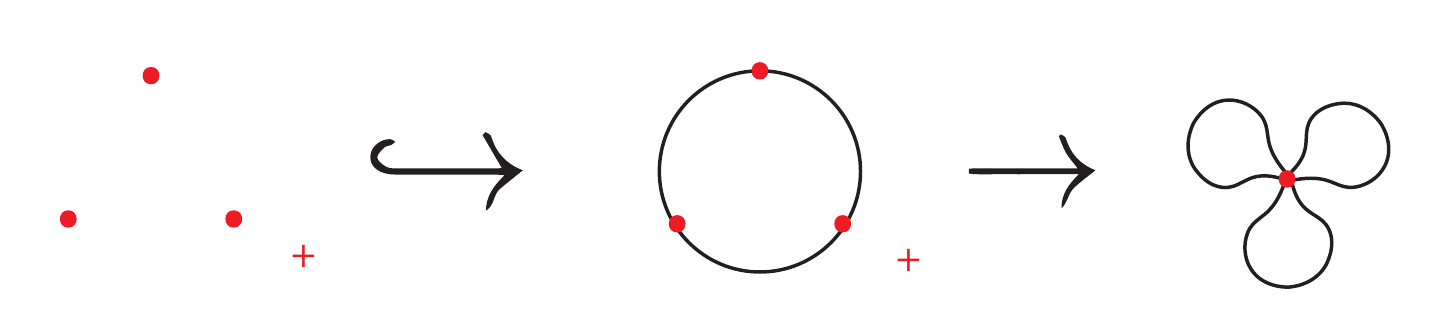}
\end{center}
\caption{\label{freespherecofibersequence} The cofiber sequence ${C_3}_+ \hookrightarrow {S^1_{\text{free}}}_+ \rightarrow S^{1,0}\wedge {C_3}_+$.}
\end{figure}

\begin{figure}
\begin{center}
	\begin{tikzpicture}[scale=0.6]
		\draw[help lines] (-2.125,-5.125) grid (5.125, 5.125);
		\draw[<->] (-2,0)--(5,0)node[right]{$p$};
		\draw[<->] (0,-5)--(0,5)node[above]{$q$};;
		\cthree{0}{0}{red};
		\cthree{1}{0}{blue};
		\draw[very thick,->] (.5,.5) -- (1.5,.5);
		\draw[very thick,->] (.5,1.5) -- (1.5,1.5);
		\draw[very thick,->] (.5,2.5) -- (1.5,2.5);
		\draw[very thick,->] (.5,3.5) -- (1.5,3.5);
		\draw[very thick,->] (.5,4.5) -- (1.5,4.5);
		\draw[very thick,->] (.5,-.5) -- (1.5,-.5);
		\draw[very thick,->] (.5,-1.5) -- (1.5,-1.5);
		\draw[very thick,->] (.5,-2.5) -- (1.5,-2.5);
		\draw[very thick,->] (.5,-3.5) -- (1.5,-3.5);
		\draw[very thick,->] (.5,-4.5) -- (1.5,-4.5);
		\draw[red] (.5,.5) node{$\bullet$};
		\draw[red] (.5,1.5) node{$\bullet$};
		\draw[red] (.5,2.5) node{$\bullet$};
		\draw[red] (.5,3.5) node{$\bullet$};
		\draw[red] (.5,4.5) node{$\bullet$};
		\draw[red] (.5,-.5) node{$\bullet$};
		\draw[red] (.5,-1.5) node{$\bullet$};
		\draw[red] (.5,-2.5) node{$\bullet$};
		\draw[red] (.5,-3.5) node{$\bullet$};
		\draw[red] (.5,-4.5) node{$\bullet$};
		\draw[blue] (1.5,.5) node{$\bullet$};
		\draw[blue] (1.5,1.5) node{$\bullet$};
		\draw[blue] (1.5,2.5) node{$\bullet$};
		\draw[blue] (1.5,3.5) node{$\bullet$};
		\draw[blue] (1.5,4.5) node{$\bullet$};
		\draw[blue] (1.5,-.5) node{$\bullet$};
		\draw[blue] (1.5,-1.5) node{$\bullet$};
		\draw[blue] (1.5,-2.5) node{$\bullet$};
		\draw[blue] (1.5,-3.5) node{$\bullet$};
		\draw[blue] (1.5,-4.5) node{$\bullet$};
	\end{tikzpicture}
	\end{center}
\caption{\label{S^1_f differential} The differential $d\colon H^{*,*}(C_3)\rightarrow \tilde{H}^{*+1,*}(S^{1,0}\wedge{C_3}_+)$.}
\end{figure}
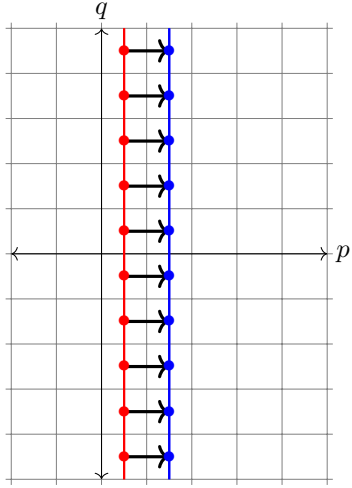

Since $H^{p,q}(C_3)\cong \mathbb{Z}/3[x,x\inv]$, we know that $H^{0,q}(C_3)$ and $\tilde{H}^{1,q}(S^{1,0}\wedge {C_3}_+)$ must be $\mathbb{Z}/3$ for each $q$. By linearity of the differential, $d^{0,q}$ is either $0$ or an isomorphism for all $q$. 

The quotient lemma tells us that $H^{1,q}(S^1_{\text{free}})\cong \mathbb{Z}/3$, so the map $\tilde{H}^{1,q}(S^{1,0}\wedge {C_3}_+)\rightarrow H^{1,q}(S^1_{\text{free}})$ in the long exact sequence must be an isomorphism when $q=0$. This implies that $d^{0,0}=0$, and thus the total differential is 0 by linearity. We can then conclude that $H^{p,q}(S^1_{\text{free}})=\mathbb{Z}/3$ when $p=0$ or $1$. 

There is still a question of whether or not the extension $\operatorname{coker}(d) \rightarrow H^{*,*}(S^1_{\text{free}})\rightarrow \ker (d)$ is trivial. In particular, we want to know if $ya$ is nonzero for $a\in H^{0,q}(S^1_{\text{free}})$. To do this, we will instead compute the $\mathbb{M}_3$-module structure on $\tilde{H}^{*,*}(S^{1,0}\wedge ({S^1_{\text{free}}}_+))\cong \tilde{H}^{*-1,*}({S^1_{\text{free}}}_+)\cong H^{*-1,*}(S^1_{\text{free}})$ using another cofiber sequence. 

The space $S^{1,0}\wedge ({S^1_{\text{free}}}_+)$ can be constructed as the cofiber of the map $S^{0,0}\rightarrow S^{2,1}$. Suspending along the Puppe sequence yields
\[S^{2,1}\rightarrow S^{1,0}\wedge ({S^1_{\text{free}}}_+) \rightarrow S^{1,0}\wedge S^{0,0}.\]
From here we can follow the same procedure of looking at the long exact sequence on cohomology and computing its differential $d\colon H^{*,*}(S^{2,1})\rightarrow H^{*+1,*}(S^{1,0}\wedge S^{0,0})$ which is shown in the left of Figure \ref{S^1_f differential2}. We know the group structure of $\tilde{H}^{*,*}\left(S^{1,0}\wedge ({S^1_{\text{free}}}_+)\right)$ from our previous computations, so it must be the case that $d$ maps the generator of $\Sigma^{2,1}\mathbb{M}_3$ to $z$ times the generator of $\Sigma^{1,0}\mathbb{M}_3$. The right of Figure \ref{S^1_f differential2} shows $\ker (d)$ and $\operatorname{coker}(d)$ in this case. Comparing the information from Figures \ref{S^1_f differential} and \ref{S^1_f differential2} (noting that the latter represents a shifted copy of $H^{*,*}(S^1_{\text{free}})$), we can see that 
\[H^{*,*}(S^1_{\text{free}})\cong x\inv \mathbb{M}_3/(z).\]

\begin{figure}
\begin{minipage}{0.45\textwidth}
\begin{center}
	\begin{tikzpicture}[scale=0.6]
		\draw[help lines] (-3.125,-5.125) grid (5.125, 5.125);
		\draw[<->] (-3,0)--(5,0)node[right]{$p$};
		\draw[<->] (0,-5)--(0,5)node[above]{$q$};;
		\scone{1}{0}{blue};
		\scone{2}{1}{red};
		\draw[very thick,->] (2.5,1.5) -- (3.5,1.5);
	\end{tikzpicture}
	\end{center}
\end{minipage} \ \begin{minipage}{0.45\textwidth}
\begin{center}
	\begin{tikzpicture}[scale=0.6]
		\draw[help lines] (-3.125,-5.125) grid (5.125, 5.125);
		\draw[<->] (-3,0)--(5,0)node[right]{$p$};
		\draw[<->] (0,-5)--(0,5)node[above]{$q$};;
		\draw[blue] (1.5,.5) -- (1.5,5);
		\draw[blue] (2.5,1.5) -- (2.5,5);
		\draw[blue] (1.5,.5) -- (2.5,1.5);
		\draw[blue] (1.5,1.5) -- (2.5,2.5);
		\draw[blue] (1.5,2.5) -- (2.5,3.5);
		\draw[blue] (1.5,3.5) -- (2.5,4.5);
		\draw[blue] (1.5,4.5) -- (2,5);
		\draw[red] (1.5,-.5) -- (1.5,-5);
		\draw[red] (2.5,.5) -- (2.5,-5);
		\draw[red] (1.5,-.5) -- (2.5,.5);
		\draw[red] (1.5,-1.5) -- (2.5,-.5);
		\draw[red] (1.5,-2.5) -- (2.5,-1.5);
		\draw[red] (1.5,-3.5) -- (2.5,-2.5);
		\draw[red] (1.5,-4.5) -- (2.5,-3.5);
		\draw[red] (2,-5) -- (2.5,-4.5);
	\end{tikzpicture}
\end{center}
\end{minipage} 
\caption{\label{S^1_f differential2} The map $d\colon \Sigma^{2,1}\mathbb{M}_3\rightarrow\Sigma^{1,0}\mathbb{M}_3$ (left) and its kernel and cokernel (right).}
\end{figure}
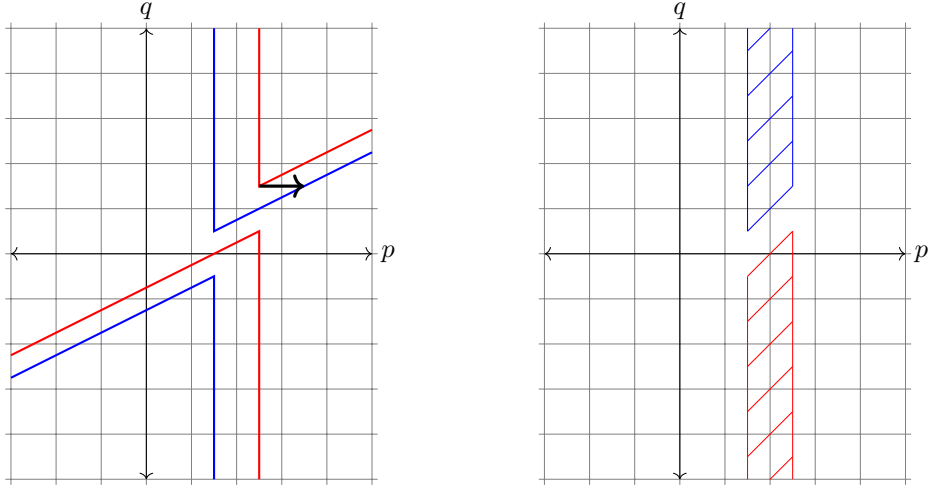
\end{example}

%%%%%%%%%%EB example%%%%%%%%%%%%%
\begin{example}\label{eb}
We next compute the reduced cohomology of the ``eggbeater'' space. The eggbeater, denoted by $EB_3$ (or $EB$ when the action of $C_3$ is understood), can be defined as the cofiber of the map ${C_3}_+\rightarrow S^{0,0}$ which sends all of ${C_3}$ to a fixed point. An illustration of this cofiber sequence can be found in Figure \ref{EB cofib seq}.

\begin{figure}
\begin{center}
\includegraphics[scale=.75]{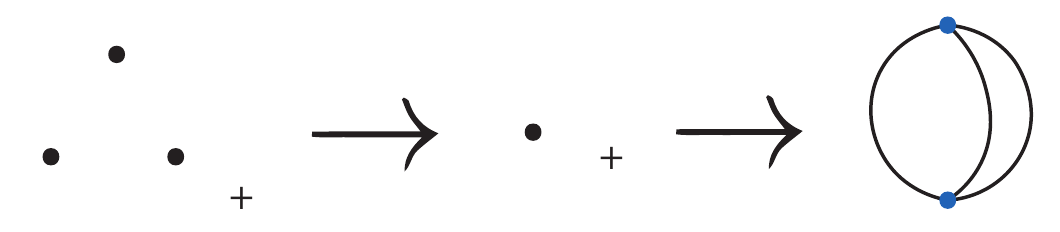}
\end{center}
\caption{\label{EB cofib seq} The cofiber sequence ${C_3}_+ \rightarrow S^{0,0} \rightarrow EB$.}
\end{figure}

To determine the cohomology of $EB$, we will instead consider the cofiber sequence 
\[EB \hookrightarrow S^{2,1} \rightarrow S^{2,0}\wedge{C_3}_+\]
which is depicted in Figure \ref{EBcofib2}. We can extend this via the Puppe sequence to get another cofiber sequence
\[S^{1,0}\wedge {C_3}_+ \rightarrow EB \rightarrow S^{2,1}.\]
Thus we get a long exact sequence on cohomology
\[\cdots \rightarrow \tilde{H}^{p,q}(S^{1,0}\wedge {C_3}_+)\rightarrow \tilde{H}^{p,q}(EB) \rightarrow \tilde{H}^{p,q}(S^{2,1})\xrightarrow{d} H^{p+1,q}(S^{1,0}\wedge {C_3}_+)\cdots\]
which can be understood by analyzing its total differential
\[d^{p,q}\colon \tilde{H}^{p,q}(S^{1,0}\wedge {C_3}_+)\rightarrow \tilde{H}^{p+1,q}(S^{2,1})\]
for all $(p,q)$. The differential $d^{1,0}$ is depicted in Figure \ref{EB diff 1}. Since the total differential $\bigoplus_{p,q}d^{p,q}$ is an $\mathbb{M}_3$-module map and $\tilde{H}^{p,q}(S^{1,0}\wedge {C_3}_+)=0$ when $p\neq 1$, we only need to compute $d^{1,0}$.

\begin{figure}
\begin{center}
\includegraphics[scale=.4]{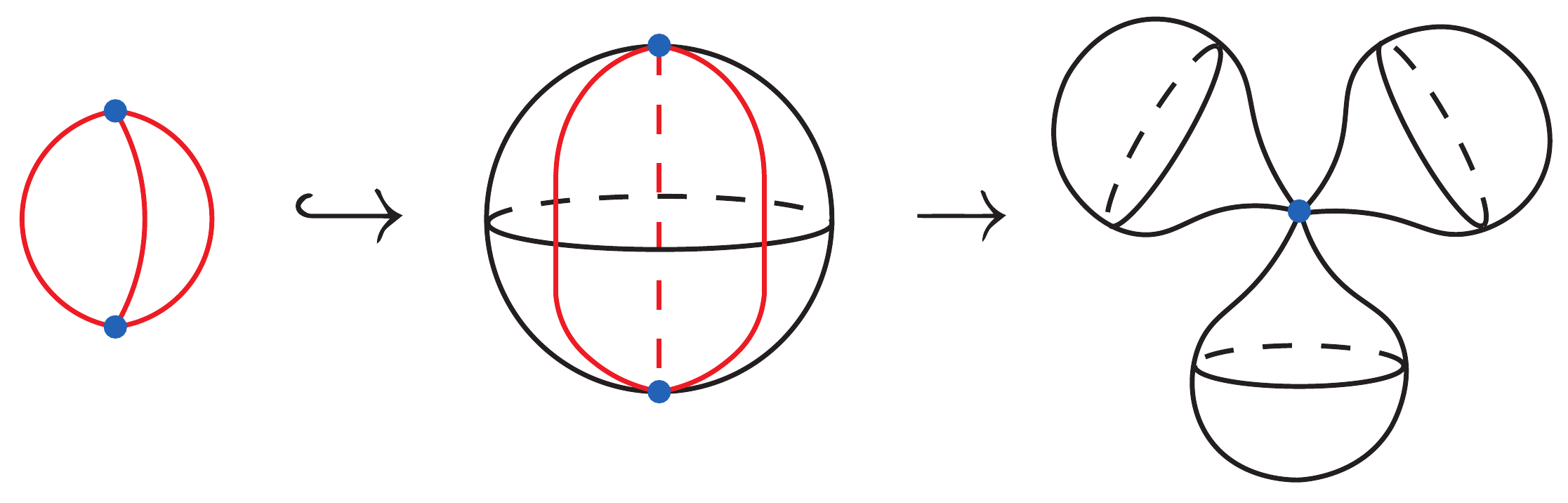}
\end{center}
\caption{\label{EBcofib2} The cofiber sequence $EB\hookrightarrow S^{2,1}\rightarrow S^{2,0}\wedge{C_3}_+$.}
\end{figure}

\begin{figure}
\begin{center}
\begin{minipage}{0.45\textwidth}
\begin{center}
	\begin{tikzpicture}[scale=0.6]
		\draw[help lines] (-3.125,-5.125) grid (5.125, 5.125);
		\draw[<->] (-3,0)--(5,0)node[right]{$p$};
		\draw[<->] (0,-5)--(0,5)node[above]{$q$};;
		\scone{2}{1}{blue};
		\cthree{1}{0}{red};
		\draw[very thick,->] (1.5,.5) -- (2.5,.5);
	\end{tikzpicture}
\end{center}
\end{minipage} \ \begin{minipage}{0.45\textwidth}
\begin{center}
	\begin{tikzpicture}[scale=0.6]
		\draw[help lines] (-3.125,-5.125) grid (5.125, 5.125);
		\draw[<->] (-3,0)--(5,0)node[right]{$p$};
		\draw[<->] (0,-5)--(0,5)node[above]{$q$};;
		\draw[thick, blue] (1.5,-.5) -- (1.5,-5);
		\draw[thick, red] (1.5,1.5) -- (1.5,5);
		\draw[thick, blue] (2+1/2,5) -- (2+1/2,1+1/2) -- (5,{(5-2)/2+1+1/4});
		\draw[thick, blue] (1/2,-5) -- (1/2,-0.5) -- (-3,{(-3)/2-3/4});
	\end{tikzpicture}
\end{center}
\end{minipage} 
\end{center}
\caption{\label{EB diff 1} The differential $d^{0,0}$ (left), and $\text{ker}(d)$ and $\text{coker}(d)$ (right).}
\end{figure}
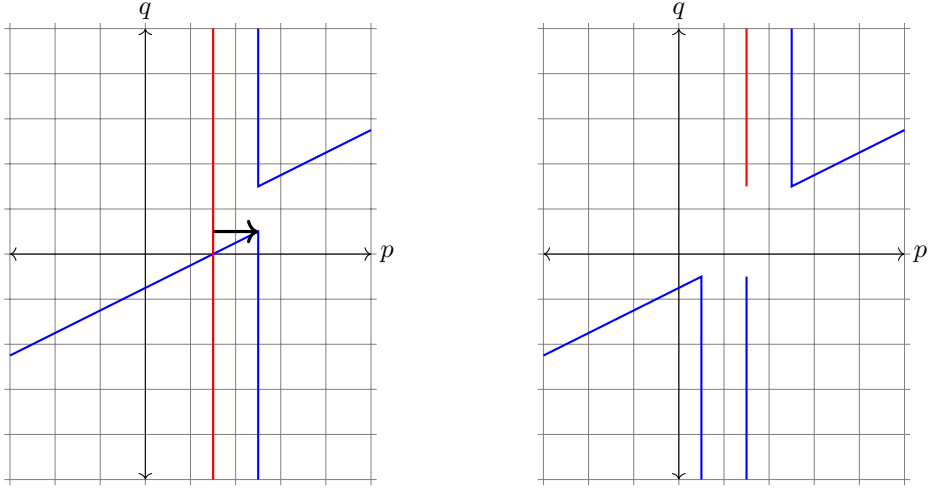

Observe that $EB/C_3\simeq pt$, so using the quotient lemma we know that 
\[\tilde{H}^{p,0}(EB)\cong \tilde{H}^p_{\text{sing}}(EB/C_3)\cong \tilde{H}^p_{\text{sing}}(\text{pt}).\]
So $\tilde{H}^{p,0}(EB)=0$ for all $p$. It then must be the case that $d^{1,0}$ is an isomorphism. By linearity, we have that $d^{1,q}$ is an isomorphism for all $q\leq 0$. We next want to understand the extension problem
\[0 \rightarrow \text{coker}(d)\rightarrow \tilde{H}^{p,q}(EB)\rightarrow \text{ker}(d)\rightarrow 0.\]
Since $\text{coker}(d)\subseteq \tilde{H}^{p,q}(EB)$, the module structure of $\text{coker}(d)$ is preserved. The extension here is nontrivial which we can see by going through a similar computation with the cofiber sequence $S^{0,0} \rightarrow EB \rightarrow S^{1,0}\wedge ({C_3}_+)$. In the end, we can think of the $\mathbb{M}_3$-module structure on $\tilde{H}^{*,*}(EB)$ as generated by elements $\alpha$ in degree $(2,1)$ and $\beta$ in degree $(1,1)$ with the relations $y\beta=0$ and $z\beta=y\alpha$. A more complete picture of this module structure is depicted on the left of Figure \ref{EBcohomology}. For brevity, we will use the representation of $\tilde{H}^{*,*}(EB;\underline{\mathbb{Z}/3})$ shown to the right of Figure \ref{EBcohomology}. 

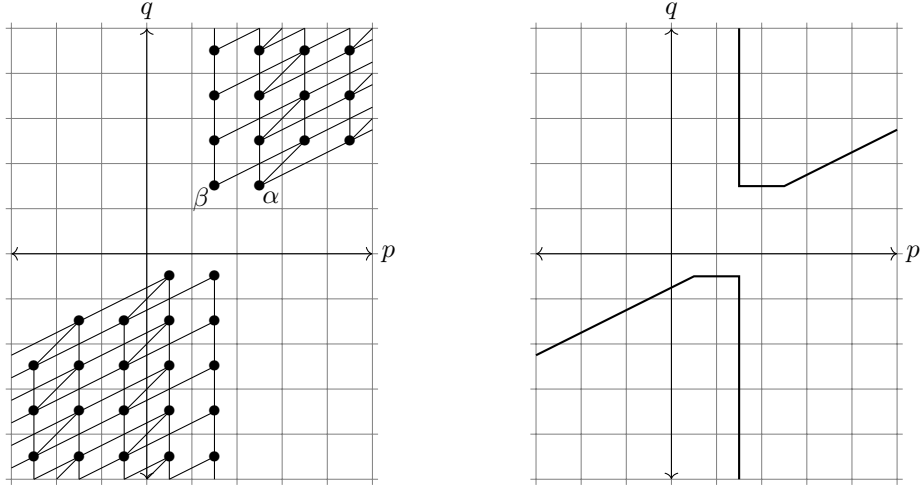
\begin{figure}
\begin{minipage}{0.45\textwidth}
\begin{center}
	\begin{tikzpicture}[scale=0.6]
		\draw[help lines] (-3.125,-5.125) grid (5.125, 5.125);
		\draw[<->] (-3,0)--(5,0)node[right]{$p$};
		\draw[<->] (0,-5)--(0,5)node[above]{$q$};;
		\draw (.5,-.5) node{$\bullet$};
		\draw (.5,-1.5) node{$\bullet$};
		\draw (.5,-2.5) node{$\bullet$};
		\draw (.5,-3.5) node{$\bullet$};
		\draw (.5,-4.5) node{$\bullet$};
		\draw (1.5,-.5) node{$\bullet$};
		\draw (1.5,-1.5) node{$\bullet$};
		\draw (1.5,-2.5) node{$\bullet$};
		\draw (1.5,-3.5) node{$\bullet$};
		\draw (1.5,-4.5) node{$\bullet$};
		\draw (1.5,2.5) node{$\bullet$};
		\draw (1.5,3.5) node{$\bullet$};
		\draw (1.5,4.5) node{$\bullet$};
		\draw (2.5,2.5) node{$\bullet$};
		\draw (2.5,3.5) node{$\bullet$};
		\draw (2.5,4.5) node{$\bullet$};
		\draw (3.5,2.5) node{$\bullet$};
		\draw (3.5,3.5) node{$\bullet$};
		\draw (3.5,4.5) node{$\bullet$};
		\draw (4.5,2.5) node{$\bullet$};
		\draw (4.5,3.5) node{$\bullet$};
		\draw (4.5,4.5) node{$\bullet$};
		\draw (-.5,-1.5) node{$\bullet$};
		\draw (-.5,-2.5) node{$\bullet$};
		\draw (-.5,-3.5) node{$\bullet$};
		\draw (-.5,-4.5) node{$\bullet$};
		\draw (-1.5,-1.5) node{$\bullet$};
		\draw (-1.5,-2.5) node{$\bullet$};
		\draw (-1.5,-3.5) node{$\bullet$};
		\draw (-1.5,-4.5) node{$\bullet$};
		\draw (-2.5,-2.5) node{$\bullet$};
		\draw (-2.5,-3.5) node{$\bullet$};
		\draw (-2.5,-4.5) node{$\bullet$};
		\draw (1.5,1.5) -- (1.5,5);
		\draw (2.5,1.5) -- (2.5,5);
		\draw (3.5,2.5) -- (3.5,5);
		\draw (4.5,2.5) -- (4.5,5);
		\draw (1.5,-.5) -- (1.5,-5);
		\draw (.5,-.5) -- (.5,-5);
		\draw (-.5,-1.5) -- (-.5,-5);
		\draw (-1.5,-1.5) -- (-1.5,-5);
		\draw (-2.5,-2.5) -- (-2.5,-5);
		\draw (2.5,1.5) -- (3.5,2.5);
		\draw (2.5,2.5) -- (3.5,3.5);
		\draw (2.5,3.5) -- (3.5,4.5);
		\draw (2.5,4.5) -- (3,5);
		\draw (4.5,2.5) -- (5,3);
		\draw (4.5,3.5) -- (5,4);
		\draw (4.5,4.5) -- (5,5);
		\draw (-.5,-1.5) -- (.5,-.5);
		\draw (-.5,-2.5) -- (.5,-1.5);
		\draw (-.5,-3.5) -- (.5,-2.5);
		\draw (-.5,-4.5) -- (.5,-3.5);
		\draw (0,-5) -- (.5,-4.5);
		\draw (-2.5,-2.5) -- (-1.5,-1.5);
		\draw (-2.5,-3.5) -- (-1.5,-2.5);
		\draw (-2.5,-4.5) -- (-1.5,-3.5);
		\draw (-2,-5) -- (-1.5,-4.5);
		\draw (2.5,1.5) -- (5,2.75);
		\draw (2.5,2.5) -- (5,3.75);
		\draw (2.5,3.5) -- (5,4.75);
		\draw (2.5,4.5) -- (3.5,5);
		\draw (1.5,1.5) -- (5,3.25);
		\draw (1.5,2.5) -- (5,4.25);
		\draw (1.5,3.5) -- (4.5,5);
		\draw (1.5,4.5) -- (2.5,5);
		\draw (.5,-.5) -- (-3,-2.25);
		\draw (.5,-1.5) -- (-3,-3.25);
		\draw (.5,-2.5) -- (-3,-4.25);
		\draw (.5,-3.5) -- (-2.5,-5);
		\draw (.5,-4.5) -- (-.5,-5);
		\draw (1.5,-.5) -- (-3,-2.75);
		\draw (1.5,-1.5) -- (-3,-3.75);
		\draw (1.5,-2.5) -- (-3,-4.75);
		\draw (1.5,-3.5) -- (-1.5,-5);
		\draw (1.5,-4.5) -- (.5,-5);
		\draw (2.5,1.5) node{$\bullet$};
		\draw (1.5,1.5) node{$\bullet$};
		\draw (2.75,1.25) node{$\alpha$};
		\draw (1.2,1.2) node{$\beta$};
	\end{tikzpicture}
	\end{center}
\end{minipage}\ \begin{minipage}{0.45\textwidth}
\begin{center}
	\begin{tikzpicture}[scale=0.6]
		\draw[help lines] (-3.125,-5.125) grid (5.125, 5.125);
		\draw[<->] (-3,0)--(5,0)node[right]{$p$};
		\draw[<->] (0,-5)--(0,5)node[above]{$q$};;
		\eb{1}{1}{black};
	\end{tikzpicture}
	\end{center}
\end{minipage}
\caption{\label{EBcohomology} The $\mathbb{M}_3$-module $\ebc$ (left) with abbreviated representation (right).}
\end{figure}
\end{example}

Going forward, we will let $\mathbb{EB}$ represent the $\mathbb{M}_3$-module $\tilde{H}^{*,*}(EB;\underline{\mathbb{Z}/3})$.

%%%%%%%%%%%%N_1[1] example%%%%%%%%%%%%%%
\begin{example}\label{N_1[1] cohomology}
Let $N_1[1]$ denote the $C_3$-surface depicted in Figure \ref{N1[1]hexagon}. Note that the underlying topological space is $\mathbb{R} P^2$, sometimes denoted by $N_1$. To compute the cohomology of this surface, let us consider the cofiber sequence
\[{S^1_{\text{free}}}_+ \hookrightarrow N_1[1] \rightarrow S^{2,1}\]
which is illustrated in Figure \ref{N1[1] cofib seq}. Thus we have the following long exact sequence on cohomology
\[\cdots \rightarrow \tilde{H}^{p,q}(S^{2,1})\rightarrow H^{p,q}(N_1[1])\rightarrow H^{p,q}(S^1_{\text{free}}) \xrightarrow{d^{p,q}} \tilde{H}^{p+1,q}(S^{2,1})\rightarrow \cdots\]
As in the previous examples, in order to compute $H^{p,q}(N_1[1])$ we need to understand the differential maps
\[d^{p,q} \colon H^{p,q}(S^1_{\text{free}})\rightarrow \tilde{H}^{p+1,q}(S^{2,1})\]
for each $(p,q)$. We can use the quotient lemma to compute $d^{1,0}$ which will then determine the value of all other possible nonzero differential maps. The differential $d^{1,0}$ is depicted in Figure \ref{N1[1] diff}. 

\begin{figure}
\begin{center}
\includegraphics[scale=.45]{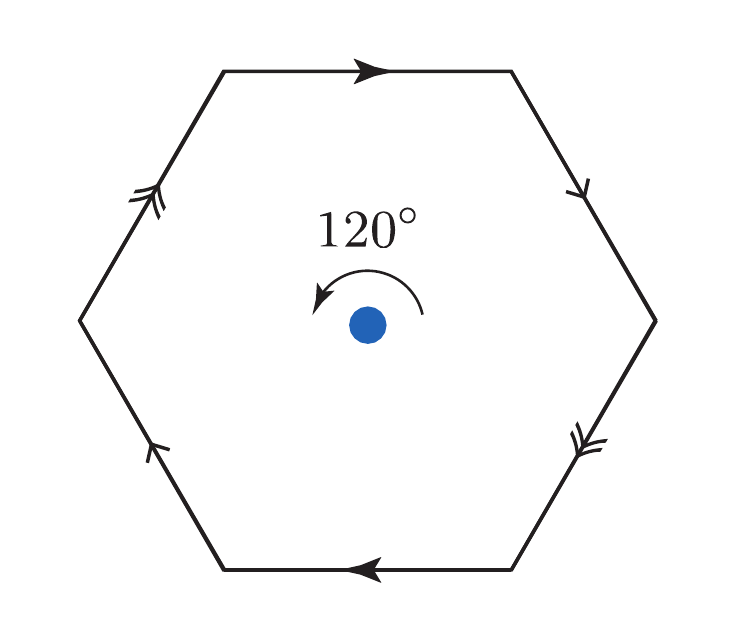}
\end{center}
\caption{\label{N1[1]hexagon} The space $N_1[1]$ whose underlying non-equivariant surface is $\mathbb{R} P^2$.}
\end{figure}

\begin{figure}
\begin{center}
\includegraphics[scale=.4]{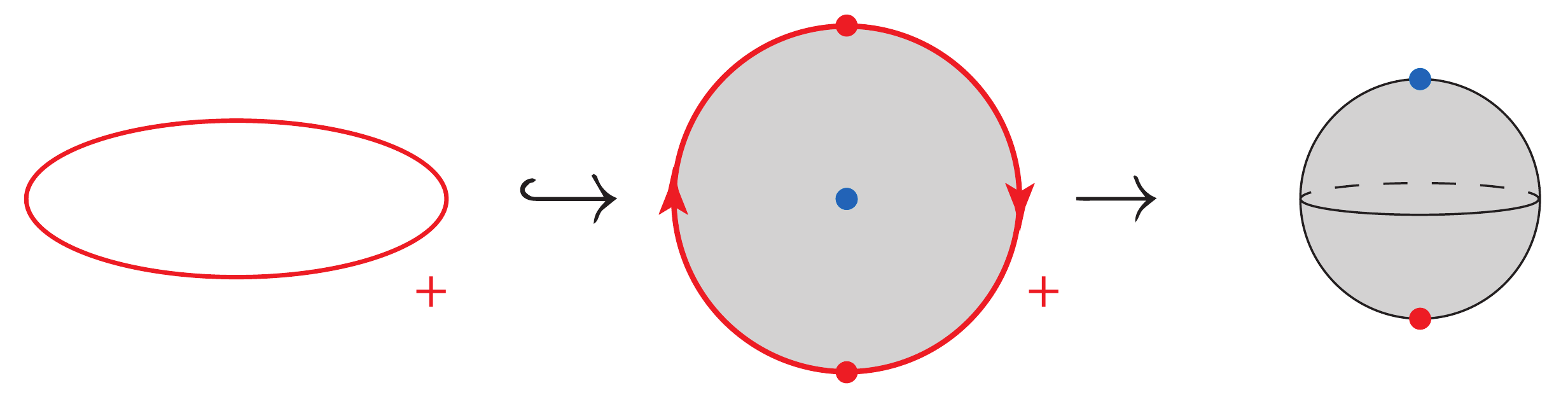}
\end{center}
\caption{\label{N1[1] cofib seq} The cofiber sequence ${S^1_{\text{free}}}_+ \hookrightarrow N_1[1]_+ \rightarrow S^{2,1}$.}
\end{figure}

\begin{figure}
\begin{center}
\begin{minipage}{0.45\textwidth}
\begin{center}
	\begin{tikzpicture}[scale=0.6]
		\draw[help lines] (-3.125,-5.125) grid (5.125, 5.125);
		\draw[<->] (-3,0)--(5,0)node[right]{$p$};
		\draw[<->] (0,-5)--(0,5)node[above]{$q$};;
		\cthree{0}{1}{red};
		\scone{2}{1}{blue};
		\draw[->,very thick] (1.5,.5) -- (2.5,.5);
	\end{tikzpicture}
\end{center}
\end{minipage} \ \begin{minipage}{0.45\textwidth}
\begin{center}
	\begin{tikzpicture}[scale=0.6]
		\draw[help lines] (-3.125,-5.125) grid (5.125, 5.125);
		\draw[<->] (-3,0)--(5,0)node[right]{$p$};
		\draw[<->] (0,-5)--(0,5)node[above]{$q$};;
		\draw[thick, blue] (2+1/2,5) -- (2+1/2,1+1/2) -- (5,{(5-2)/2+1+1/4});
		\draw[thick, blue] (1/2,-5) -- (1/2,-0.5) -- (-3,{(-3)/2-3/4});
		\draw[thick, red] (0.5,0.5) -- (0.5,5);
		\draw[thick, red] (1.5,1.5) -- (1.5,5);
		\draw[thick, red] (0.5,.5) -- (1.5,1.5);
		\draw[thick, red] (0.5,1.5) -- (1.5,2.5);
		\draw[thick, red] (0.5,2.5) -- (1.5,3.5);
		\draw[thick, red] (0.5,3.5) -- (1.5,4.5);
	\end{tikzpicture}
\end{center}
\end{minipage}
\end{center}
\caption{\label{N1[1] diff} The differential $d^{1,0}$ (left), and $\text{ker}(d)$ and $\text{coker}(d)$ (right).}
\end{figure}
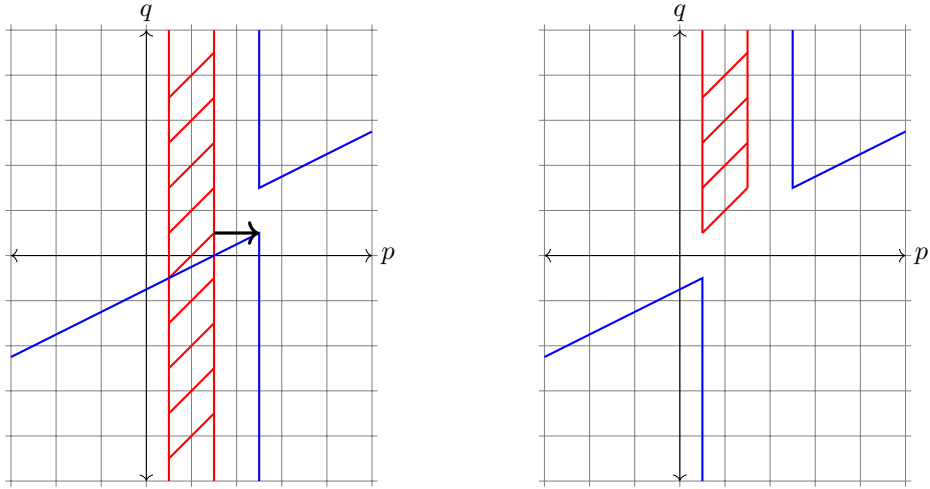

We can observe that $N_1[1]/C_3\cong \mathbb{R} P^2$. Recall $H^p_{\text{sing}}(\mathbb{R} P^2;\mathbb{Z}/3)\cong\mathbb{Z}/3$ when $p=0$, and it is $0$ for all other values of $p$. This implies the differential $d^{1,0}$ must be an isomorphism. The $\mathbb{M}_3$-module structure of $H^{*,*}(S^1_{\text{free}})$ then guarantees that $d^{1,q}$ is an isomorphism for all $q\leq 0$. We similarly find that $d^{0,q}$ must be an isomorphism for $q<0$. 

Now we are left to solve the extension problem of $\mathbb{M}_3$-modules
\[0\rightarrow\text{coker} (d)\rightarrow H^{*,*}(N_1[1])\rightarrow \text{ker}(d)\rightarrow 0.\]
However since $\mathbb{M}_3$ must be a submodule of $H^{*,*}(N_1[1])$, we conclude that $H^{*,*}(N_1[1])\cong \mathbb{M}_3$ and the extension is nontrivial.
\end{example}

\begin{remark}\label{M3summand}
In general, given a $C_3$-space $X$ with at least one fixed point, $\mathbb{M}_3$ is a summand of $H^{*,*}(X;\underline{\mathbb{Z}/3})$. This is because the inclusion $\textup{pt}\hookrightarrow X$ induces a surjective $\mathbb{M}_3$-module map $H^{*,*}(X)\twoheadrightarrow H^{*,*}(\textup{pt})=\mathbb{M}_3$.
\end{remark}

% [DOES THIS REMARK GO HERE OR SOMEWHERE ELSE??]

% \begin{remark}
% % Going forward we will use the notation $N_r[F]$ to denote a non-orientable $C_3$-surface of genus $r$ and $F$ fixed points. There is a concern that this notation will not be well-defined, but it turns out to be the case that genus and the number of fixed points is enough to determine a closed, non-orientable $C_3$-surface up to isomorphism. \\

% When discussing non-equivariant surfaces, $M_g$ will always be used to denote the ``$g$-holed torus'' (ie. the closed, orientable surface with genus $\beta=2g$), and $N_r$ will denote the closed, non-orientable surface of $\beta$-genus $r$.
% \end{remark}

\section{Classification of $C_3$-surfaces}\label{surfacesection}

%TALK ABOUT THE NECESSARY CLASSIFICATION STUFF HERE

Up to isomorphism, all non-trivial, closed, connected surfaces with a $C_3$-action were classified in \cite{Pohl}. A method of constructing $C_3$-surfaces through a series of operations was developed, and it was proved that all $C_3$-surfaces can be constructed using the prescribed operations. The main result of \cite{Pohl} shows that there are six distinct families of isomorphism classes of $C_3$-surfaces which can be constructed using these equivariant surgery methods.

We use this section to introduce the language of equivariant surgery and state the main classification theorem. All proofs will be omitted from this paper but can be found in \cite{Pohl}.

\begin{notation}
    The following convention will always be used to discuss non-equivariant surfaces: $M_g$ denotes the genus $g$ orientable surface, and $N_r$ represents the genus $r$ non-orientable surface. 
\end{notation}

\subsection{Building blocks}

So far we have discussed a few important $C_3$-surfaces such as the representation sphere $S^{2,1}$ and the space $N_1[1]$ whose underlying surface is $\R P^2$. Our next goal is to introduce several other $C_3$-surfaces (informally referred to as ``building blocks'') from which we will construct all other closed surfaces with $C_3$-action. This section contains brief descriptions of these building block surfaces; the curious reader is directed to \cite{Pohl} for a more precise treatment of their definitions. 

%%%%%FREE TORUS AND KLEIN BOTTLE%%%%%%%%%
There is a free $C_3$-action on the torus $M_1$ given by rotation of $120^\circ$ about its center. Denote this equivariant space by $M_1^{\text{free}}$. The Klein bottle also has a free action. Start with two copies of $N_1[1]$ and with each remove a copy of $D^{2,1}$ about the lone fixed point. The result is two M\"{o}bius bands with a free action. Then use an equivariant map to identify the boundary of these M\"{o}bius bands. The resulting space is non-equivariantly equivalent to a Klein bottle and inherits a free $C_3$-action. 

%%%%%%%%%%%HEXAGON TOWERS%%%%%%%%%%%%%%
The final family of building block surfaces must be defined inductively. Much like the construction of $N_1[1]$, we start with a hexagon which has a natural rotation action of $C_3$. After identifying opposite edges of the hexagon as shown on the left in Figure \ref{cleanerhex}, the resulting space (denoted $\Hex_1$) is non-equivariantly equivalent to a torus, and its action has 3 fixed points.

\begin{figure}
    \centering
    \includegraphics[scale=.4]{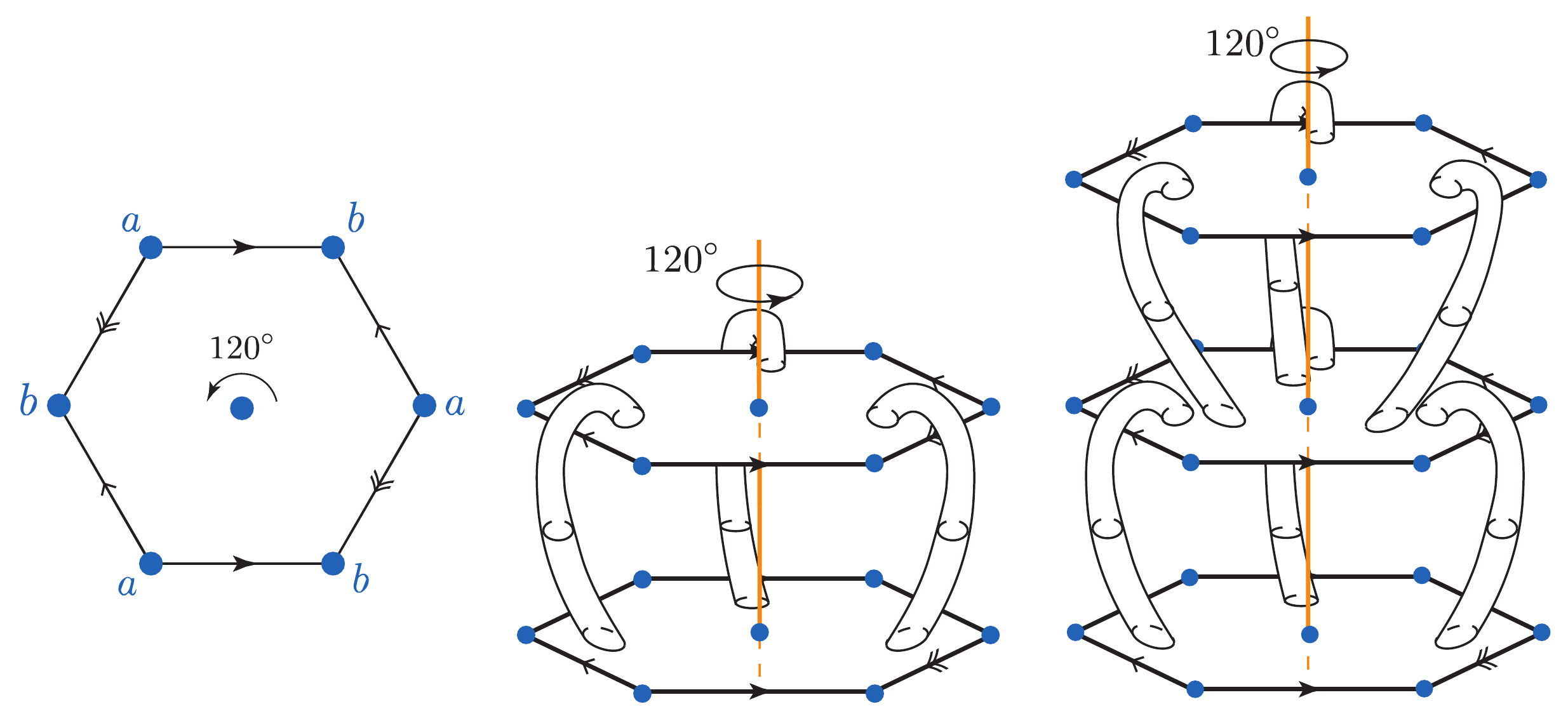}
    \caption{From left to right: the spaces $\Hex_1$, $\Hex_2$, and $\Hex_3$}
    \label{cleanerhex}
\end{figure}

To define $\Hex_2$, start with two copies of $\Hex_1$, and from each remove a set of conjugate disks. Glue the boundary of these spaces together along an equivariant map of degree $-1$. The result is the space $\Hex_2$, depicted in the center of Figure \ref{cleanerhex}. 

In general, $\Hex_n$ can be constructed from $\Hex_{n-1}$ by attaching a copy of $\Hex_1$ in the same way. From each of $\Hex_1$ and $\Hex_{n-1}$, remove three conjugate disks. Then identify their boundaries using an equivariant map of degree $-1$ to obtain $\Hex_n$. 

\subsection{Equivariant surgery constructions}

Given any known $C_3$-surface $X$, we next define two ways of constructing a new equivariant surface from $X$. Though these are the only operations needed for the statement of the classification theorem, other equivariant surgery constructions are required for its proof. The reader is again directed to \cite{Pohl} for a complete treatment of this story. %The following are not the only equivariant surgery operations one can define on these objects, but they are the only ones needed to state the classification result. More of these constructions can be found in \cite{Pohl}. \\% which is more complex in some sense. In particular, our newly constructed surface will have a higher genus or its action will have more fixed points.  \\

%Non-equivariantly, we know that given any surface we can construct another surface of higher genus by attaching handles. .............. \\

\begin{definition}
Let $Y$ be a non-equivariant surface and $X$ a surface with a nontrivial order $3$ homeomorphism $\sigma\colon X\rightarrow X$. Define $\tilde{Y}:=Y\setminus D^2$, and let $D$ be a disk in $X$ so that $D$ is disjoint from each of its conjugates $\sigma^i D$. Similarly let $\tilde{X}$ denote $X$ with each of the $\sigma^i D$ removed. Choose an isomorphism $f\colon \partial \tilde{Y}\rightarrow \partial D$. We define an \textbf{equivariant connected sum} $X\#_3 Y$, by 
\[\left[\tilde{X}\sqcup\coprod_{i=0}^{2}\left(\tilde{Y}\times \{i\}\right)\right]/\sim\]
where $(y,i)\sim \sigma^i(f(y))$ for $y\in \partial \tilde{Y}$ and $0\leq i \leq 2$. We can see an example of this surgery in Figure \ref{equivcnctsum}.
\end{definition}

\begin{figure}
\begin{center}
\includegraphics[scale=.5]{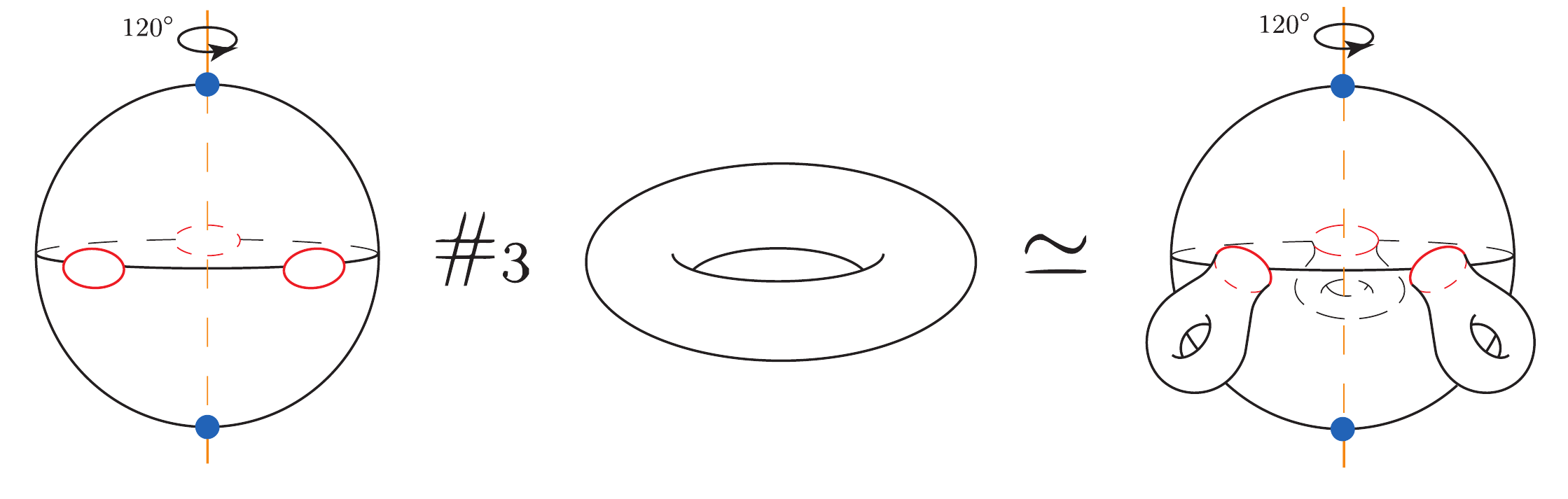}
\end{center}
\caption{\label{equivcnctsum} We can see above the result of the surgery $S^{2,1}\#_3 M_1$.}
\end{figure}

%%%%%%%%%%%%REGULAR RIBBON%%%%%%%%%%%%%%%%
Before defining the next surgery operation, let us introduce notation for a particularly important equivariant surface. Let $D$ be a disk in $S^{2,1}$ that is disjoint from each of its conjugate disks. We define a \textbf{$C_3$-equivariant ribbon} as 
\[S^{2,1}\setminus \left(\coprod_{j=0}^2 \sigma^j D\right),\]
and we denote this space $R_{3}$. We can see $R_{3}$ depicted in the center of Figure \ref{ribbonsurgery}. Its action can be described as rotation about the orange axis. There are two fixed points in $R_3$, given by the points $a$ and $b$ in blue where the axis of rotation intersects the surface.

\begin{figure}
\begin{center}
\includegraphics[scale=.5]{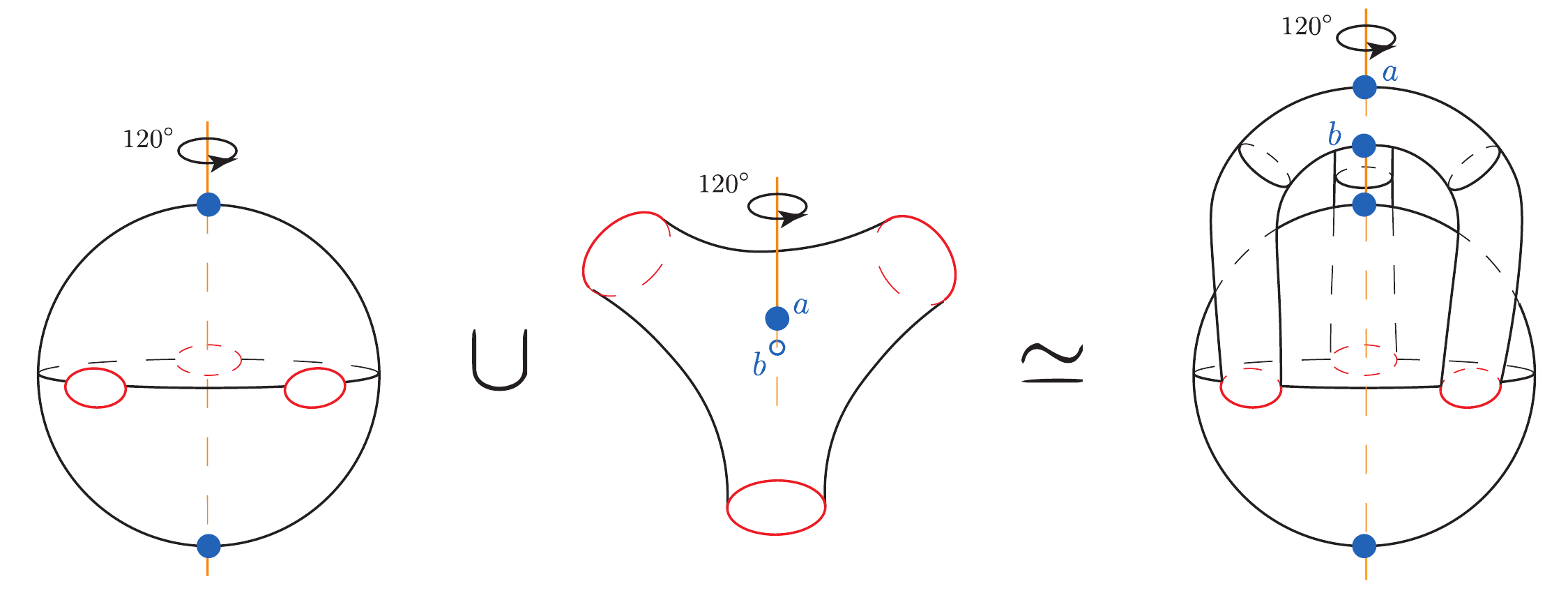}
\end{center}
\caption{\label{ribbonsurgery} We can see above the result of the surgery $S^{2,1}+[R_3]$.}
\end{figure}

\begin{definition}
Let $X$ be a surface with a nontrivial order $3$ homeomorphism $\sigma\colon X\rightarrow X$. Choose a disk $D_1$ in $X$ that is disjoint from $\sigma^jD_1$ for each $j$. Then remove each of the $\sigma^jD_1$ to form the space $\tilde{X}$. Let $D$ be the disk in $S^{2,1}$ which was removed (along with its conjugates) to form $R_3$. Choose an isomorphism $f\colon \partial D_1\rightarrow \partial D$ and extend this equivariantly to an isomorphism $\tilde{f}\colon \partial \tilde{X}\rightarrow\partial R_3$. We then define \textbf{$C_3$-ribbon surgery} on $X$ to be the space
\[\left(\tilde{X}\sqcup R_3\right)/\sim\]
where $x\sim \tilde{f}(x)$ for $x\in\partial\tilde{X}$. This is a new $C_3$-surface which we will denote $X+[R_3]$.
\end{definition}

\subsection{Results}

\begin{theorem}\label{orientableclassification}
Let $X$ be a connected, closed surface with a free action of $C_3$. Then $X$ can be constructed via one of the following surgery procedures.
\begin{enumerate}
	\item $M_{1+3g}^{\textup{free}}:= M_1^{\textup{free}}\#_3M_g$, \qquad $g\geq 0$
         \item $N_{2+3r}^{\textup{free}}\cong N_2^{\textup{free}}\#_3N_r$, \qquad $r\geq 0$
	% \item $\Sph_{2k+3g}[2k+2]:=\left(S^{2,1}+k[R_3]\right)\#_3M_g$, $k,g\geq 0$
	% \item $\Hex_{n,(3n-2)+2k+3g}[3n+2k]:=\left(\Hex_n+k[R_3]\right) \#_3M_g$, $k,g\geq 0$, $n\geq 1$
\end{enumerate}
\end{theorem}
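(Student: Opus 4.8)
The plan is to establish the classification by induction on the non-equivariant genus of the quotient surface, using the equivariant surgery operations introduced above together with the structural information about free $C_3$-surfaces. First I would observe that since the action is free, the quotient map $X \to X/C_3$ is a covering space, so $X/C_3$ is itself a closed, connected surface, and the Euler characteristic satisfies $\chi(X) = 3\chi(X/C_3)$. This immediately forces $\chi(X) \equiv 0 \pmod 3$, which rules out many surfaces and already constrains $\beta(X)$: in the orientable case $\chi(X) = 2 - 2g'$ where $g'$ is the genus of $X$, so $g' \equiv 1 \pmod 3$, i.e. $g' = 1 + 3g$ for some $g \ge 0$; in the non-orientable case $\chi(X) = 2 - r'$, so $r' \equiv 2 \pmod 3$, i.e. $r' = 2 + 3r$. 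This is exactly the indexing appearing in the statement, so the content left to prove is that \emph{every} such surface actually admits a free $C_3$-action and that it arises from the stated surgery.

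Next I would handle realizability and the surgery description together. For the orientable case, I would start from $M_1^{\textup{free}}$ (rotation of the torus by $120^\circ$, which is free) as the base case $g=0$, and then show that $M_1^{\textup{free}} \#_3 M_g$ is free and has underlying surface $M_{1+3g}$: freeness is clear because the equivariant connected sum glues three disjoint conjugate copies of $\tilde M_g$ onto the free part of $M_1^{\textup{free}}$ away from any fixed points (and $M_1^{\textup{free}}$ has none), and the genus count follows from additivity of the connected sum on the three copies, giving $1 + 3g$. The non-orientable case is analogous, starting from $N_2^{\textup{free}}$ (the free Klein bottle constructed above by gluing two free M\"obius bands) and forming $N_2^{\textup{free}} \#_3 N_r$, which has underlying surface $N_{2+3r}$. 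The reverse direction — that an arbitrary free $C_3$-surface $X$ is isomorphic to one of these — is where I would appeal to the classification machinery: given $X$ free, pick an embedded disk $D \subseteq X/C_3$ that lifts to three disjoint disks upstairs, and exhibit $X$ as $M_1^{\textup{free}} \#_3 Y$ (resp. $N_2^{\textup{free}} \#_3 Y$) where $Y = (X/C_3) \setminus D^2$ capped off suitably, then induct, using that equivariant connected sum with a disk is the identity and that each step reduces the quotient genus.

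The main obstacle I anticipate is the uniqueness/reverse direction: showing that two free $C_3$-actions on the same underlying surface $M_{1+3g}$ (resp. $N_{2+3r}$) are equivariantly isomorphic, equivalently that the surgery presentation is independent of the choices of disks and gluing maps. This requires knowing that the mapping class group action on isotopy classes of such disk systems is transitive and that free actions of $C_3$ on a fixed surface are classified up to conjugacy by the covering-space data $H^1(X/C_3) \twoheadrightarrow \Z/3$ up to automorphism — a statement that is true but genuinely needs the surface topology input from \cite{Pohl}. Since the excerpt explicitly states ``All proofs will be omitted from this paper but can be found in \cite{Pohl},'' I would in fact simply cite \cite{Pohl} for this theorem rather than reprove it, and the ``proof'' in this paper is really just the remark that the Euler-characteristic congruence forces the stated indexing, with the substance deferred to the surgery classification reference.
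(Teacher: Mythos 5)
Your proposal correctly identifies that the paper offers no proof of this theorem; Section \ref{surfacesection} explicitly states that all classification proofs are deferred to \cite{Pohl}, which is exactly what you conclude. Your Euler-characteristic congruence sketch (using $\chi(X)=3\chi(X/C_3)$ to force the indexing $1+3g$, $2+3r$) is a sound sanity check on the statement, but as you acknowledge, the substantive reverse/uniqueness direction lives entirely in \cite{Pohl}, so citing it is the correct "proof" here and matches the paper's treatment.
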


\begin{theorem}\label{nonorientableclassification}
Let $X$ be a connected, closed, surface with a nonfree action of $C_3$. Then $X$ can be constructed via one of the following surgery procedures.
\begin{enumerate}
        \item $\Sph_{2k+3g}[2k+2]:=\left(S^{2,1}+k[R_3]\right)\#_3M_g$, \qquad $k,g\geq 0$
	\item $\Hex_{n,(3n-2)+2k+3g}[3n+2k]:=\left(\Hex_n+k[R_3]\right) \#_3M_g$, \qquad $k,g\geq 0$, $n\geq 1$
	\item $N_{4k+3r}[2k+2]:=\left(S^{2,1}+k[R_3]\right)\#_3N_r$, \qquad $r\geq 1$
	\item $N_{1+4k+3r}[1+2k]:=\left(N_1[1]+k[R_3]\right)\#_3N_r$, \qquad $k,r\geq 0$
\end{enumerate}
%In particular, the isomorphism type of $X$ is completely determined by $r$ and $F$.
\end{theorem}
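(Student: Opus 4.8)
The plan is to prove this by passing to the orbit space and classifying branched triple covers. Since $C_3$ has odd prime order, a nontrivial action on a closed surface $X$ may be taken (after an equivariant smoothing or isotopy) to be locally linear, so its fixed set is a finite collection of points $x_1,\dots,x_F$ near each of which $\sigma$ is rotation by $2\pi b_i/3$ for some $b_i\in(\Z/3)^\times$; in the nonfree case $F\geq 1$. The quotient $Y:=X/C_3$ is then a closed surface and $\pi\colon X\to Y$ is a regular branched triple cover, branched at the images $\bar x_i$ with full ramification. Deleting small disk neighborhoods $D_i\ni \bar x_i$ gives a compact surface $\tilde Y$ with $F$ boundary circles, and $\pi^{-1}(\tilde Y)\to\tilde Y$ is a connected free regular $C_3$-cover, hence classified up to automorphisms of $C_3$ by an epimorphism $\phi\colon\pi_1(\tilde Y)\to C_3$ sending each boundary loop $\partial_i$ to the unit $b_i$ (this full-ramification condition is exactly what lets one glue the $D_i$ back in equivariantly). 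Conversely such data recovers $X$ up to equivariant homeomorphism, so it suffices to enumerate triples $(Y,\{b_i\},\phi)$ up to the mapping class group of $(Y;\bar x_1,\dots,\bar x_F)$ and the action of $\operatorname{Aut}(C_3)$. Along the way I would record Riemann--Hurwitz, $\chi(X)=3\chi(Y)-2F$, and the fact that because the cover has odd degree the transfer shows $\pi^{*}$ is injective on $H^{1}(-;\Z/2)$, so $X$ is orientable precisely when $Y$ is; this splits the argument into cases 1--2 (orientable) and 3--4 (nonorientable).

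The next step is to normalize $\phi$ using the mapping class group, which by the Birman sequence is generated by the mapping class group of $Y$ together with point-pushes of the branch points. On the handle part of $\pi_1(\tilde Y)$ (the generators $a_j,b_j$ when $Y$ is orientable) Dehn twists act by symplectic transvections, and pushing a branch point $\bar x_i$ around a handle curve modifies the corresponding generator by $\partial_i$ in homology while fixing $\partial_i$; since $F\geq 1$ and every $b_i$ is a unit, finitely many such pushes kill $\phi$ on each handle generator. After this normalization every handle of $Y$ carries trivial monodromy, so three conjugate copies of that handle sit symmetrically in the free part of $X$ --- which is exactly an equivariant connected sum with $M_1$ --- and the orientable handles of $Y$ contribute a $\#_3 M_g$ factor. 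In the nonorientable case the same mechanism, applied to the crosscap generators $c_j$ via crosscap slides and point-pushes, trivializes $\phi$ on all but at most one $c_j$; an inactive crosscap gives a $\#_3 N_1$ summand, while a single active crosscap pulls back to one M\"obius band over one crosscap, which is precisely the defining feature of the building block $N_1[1]$.

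After these reductions the only remaining information is the multiset of rotation numbers $\{b_i\}$, constrained by $\sum b_i\equiv 0\pmod 3$ when $Y$ is orientable and otherwise unconstrained, with each $b_i$ only well defined up to individual sign (pushing a branch point around an orientation-reversing loop flips $b_i$). A pair of branch points with rotation numbers $1$ and $2$ can be stripped off along an equivariant tube, and this is exactly the inverse of a $C_3$-ribbon surgery $+[R_3]$; iterating leaves an irreducible core. Over an orientable $Y$ the core is either $S^{2,1}$ (two branch points, rotation numbers $1,2$) or, once the residual rotation numbers all agree, the surface $\Hex_n$ with its $3n$ branch points; over a nonorientable $Y$ the core is $S^{2,1}$ when no crosscap is active --- forcing a nontrivial $\#_3 N_r$ with $r\geq 1$ to account for nonorientability --- and $N_1[1]$ when one crosscap is active. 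Reassembling core $+$ $k$ ribbons $+$ $\#_3 M_g$ or $\#_3 N_r$ yields exactly the four listed families; feeding the core data through Riemann--Hurwitz together with the additivity of $\chi$ and of $F$ under the surgeries produces the stated subscripts, and the parity of $F$ (even for an $S^{2,1}$-core, odd for an $N_1[1]$-core) is what separates cases 3 and 4.

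The main obstacle is the second step: making precise that the mapping class group action on epimorphisms $\pi_1(\tilde Y)\to C_3$ with prescribed boundary behaviour has exactly the normal forms above, and --- more delicate --- verifying that each homological reduction move (trivializing a handle or crosscap, stripping a $(1,2)$ pair) is induced by an honest equivariant homeomorphism of $X$ realizing a composite of the named surgeries, not merely by a manipulation of covering data; in particular one must produce the embedded $R_3$ or the conjugate handles geometrically. The nonorientable bookkeeping --- the relaxed rotation-number relation, the ``at most one active crosscap'' statement, and the resulting parity dichotomy --- is where one must be most careful, and is also the place where the argument genuinely uses that $3$ is odd.
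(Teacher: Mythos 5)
The paper does not actually prove Theorem~\ref{nonorientableclassification}; it states at the top of Section~\ref{surfacesection} that ``All proofs will be omitted from this paper but can be found in \cite{Pohl},'' and \cite{Pohl} is described as using equivariant surgery methods. So there is no in-paper proof to compare against. Your approach --- passing to the quotient $Y=X/C_3$, viewing $X\to Y$ as a fully ramified regular branched triple cover, and classifying the monodromy $\phi\colon\pi_1(\tilde Y)\to C_3$ up to the action of the (boundary-permuting) mapping class group and $\operatorname{Aut}(C_3)$ --- is the classical Nielsen-type route. It is a genuinely different strategy, and it has the virtue of reducing to a well-studied algebraic normalization problem and of making the orientability dichotomy, Riemann--Hurwitz relation $\chi(X)=3\chi(Y)-2F$, and mod-$3$ congruence $F\equiv 2-\beta(X)$ transparent. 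The surgery-first approach of \cite{Pohl} has the complementary virtue of directly producing the cofiber sequences that the rest of this paper needs.

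That said, the proposal is not a proof: the two steps you yourself single out are exactly where all the content lives, and they are left at the level of plausible gestures. The normalization of $\phi$ under the full mapping class group of $(Y;\bar x_1,\dots,\bar x_F)$ --- using the Birman exact sequence, point-pushes, Dehn twists, and in the nonorientable case crosscap slides --- needs to be proved, not asserted; in particular the claim that one can trivialize $\phi$ on all but at most one crosscap generator, subject to the relation $\sum_j 2\phi(c_j)+\sum_i b_i\equiv 0$, is not obvious and controls the entire case~3 vs.\ case~4 dichotomy. The second, harder gap is upgrading each homological normalization move to an honest $C_3$-equivariant homeomorphism of $X$ that is visibly a composite of the named surgeries: one must exhibit an embedded $R_3$ realizing the stripping of a $(1,2)$ rotation-number pair and exhibit the three conjugate handles realizing $\#_3 M_g$ (resp.\ the crosscap realization of $\#_3 N_r$ and the single M\"obius band realizing the $N_1[1]$ core). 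Without these, you have enumerated covering data, not proved that every $C_3$-surface arises from the four listed constructions. A secondary issue is that distinguishing case~1 from case~2 requires pinning down the multiset of local rotation numbers at fixed points, and you should verify (rather than assume) that $R_3$ contributes one fixed point of each rotation number and that all fixed points of $\Hex_n$ share the same rotation number; the discussion of Remark~\ref{badnews} (that $\Sph_4[6]$ and $\Hex_{2,4}[6]$ are non-isomorphic with the same genus and $F$) shows this invariant is doing real work.
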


The notation established in the theorem statement allows for quick identification of $F(X)$ and genus. The value in brackets gives $F(X)$ for each $X$. Additionally, a lone subscript indicates the genus of the underlying surface. For example, $\Sph_4[6]$ describes an orientable surface of genus $4$ constructed from a sphere whose action has $6$ fixed points. The only exception is the class constructed from $\Hex_n$ whose notation contains two subscripts. The first describes the space $\Hex_n$ used in construction of the surface, and the second subscript indicates genus. Thus, $\Hex_{2,4}[6]$ was built from $\Hex_2$ in the equivariant surgery construction. Its underlying non-equivariant surface has genus $4$, and its $C_3$-action has $6$ fixed points. 

\begin{remark}\label{badnews}
It is important to note that for orientable surfaces, genus and the size of the fixed set do not provide enough information to distinguish between these classes. For example, $\Hex_{2,4}[6]$ and $\Sph_{4}[6]$ are non-isomorphic orientable surfaces with the same genus and number of fixed points. Nonetheless, we will see in Section \ref{computationsII} that the cohomology depends only on the genus and number of fixed points, so these spaces also have the same cohomology. 

In the case of non-orientable surfaces, fixed set size and genus do distinguish between isomorphism classes. In other words, given a non-orientable surface $X$ with specific values for $F(X)$ and $\beta(X)$, one can explicitly determine how $X$ was constructed via equivariant surgeries.
\end{remark}

\section{Cohomology Computations of Free $C_3$-surfaces}\label{computationsI}

In this section, we prove the main cohomology result for free actions by directly computing the cohomology of all free $C_3$-surfaces in $\underline{\Z/3}$-coefficients. We assume going forward that coefficients are always the constant Mackey functor $\underline{\Z/3}$, so this will be left out of the notation in favor of brevity.

\begin{theorem}\label{freethm}
  The following are true for all $g,r\geq 0$.
\begin{enumerate}\setlength{\itemsep}{.5\baselineskip}
\item $\displaystyle{H^{*,*}(M_{1+3g}^{\text{free}})\cong H^{*,*}(S^1_{\text{free}})\oplus \Sigma^{1,0}H^{*,*}(S^1_{\text{free}})\oplus \left(\Sigma^{1,0}H^{*,*}(C_3)\right)^{\oplus 2g}}$
\item $\displaystyle{H^{*,*}(N_{2+3r}^{\text{free}})\cong H^{*,*}(S^1_{\text{free}})\oplus \left(\Sigma^{1,0}H^{*,*}(C_3)\right)^{\oplus r-1}}$
\end{enumerate}  
\end{theorem}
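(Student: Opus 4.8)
The plan is to compute the cohomology of each free $C_3$-surface directly from its equivariant surgery description in Theorem \ref{orientableclassification}, by constructing a convenient cofiber sequence and running the associated long exact sequence, exactly as in the model computations for $S^1_{\text{free}}$, $EB$, and $N_1[1]$ in Section \ref{preliminaries}. For part (1), recall $M^{\text{free}}_{1+3g} = M_1^{\text{free}}\#_3 M_g$. Since the equivariant connected sum removes three conjugate disks from $M_1^{\text{free}}$ and glues in three copies of $M_g\setminus D^2$ permuted freely by $C_3$, the natural cofiber sequence to use collapses the glued-in free part: one gets a sequence relating $M^{\text{free}}_{1+3g}$ to $M^{\text{free}}_1$ and to a free piece built from $3$ copies of the punctured surface, i.e. something of the form $C_3 \times (\text{wedge of circles})$. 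The first step is therefore to establish the base case $g=0$: compute $H^{*,*}(M_1^{\text{free}})$, where $M_1^{\text{free}}$ is the torus with the free rotation action. Here I would use the cofiber sequence coming from an equivariant CW structure on $M_1^{\text{free}}$ with one free $0$-cell orbit, two free $1$-cell orbits and one free $2$-cell orbit (the free rotation action is cellular), or equivalently view $M_1^{\text{free}}$ as built from $S^1_{\text{free}}$; the quotient is $M_1$, so the quotient lemma pins down the $q=0$ line and hence (by $\M_3$-linearity of the total differential, as in the $S^1_{\text{free}}$ example) all differentials, and then one solves the resulting extension problem of $\M_3$-modules to get $H^{*,*}(S^1_{\text{free}})\oplus \Sigma^{1,0}H^{*,*}(S^1_{\text{free}})$.

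For the inductive/connected-sum step, I would use the cofiber sequence associated to the surgery: attaching $C_3$ many punctured copies of $M_g$ along $C_3\times S^1 \hookrightarrow$ (something), yielding a long exact sequence that, after the $g=0$ case is known, reduces to understanding the contribution of the attached free cells. Since $M_g\setminus D^2$ is homotopy equivalent to a wedge of $2g$ circles, the attached piece contributes $C_3\times(\vee^{2g}S^1)$, whose cohomology is computed by Lemma \ref{times C_3} to be $\Z/3[x,x^{-1}]\otimes H^*_{\text{sing}}(\vee^{2g}S^1) \cong H^{*,*}(C_3)\oplus \left(\Sigma^{1,0}H^{*,*}(C_3)\right)^{\oplus 2g}$ up to the appropriate shift. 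The quotient lemma applied to $M_{1+3g}^{\text{free}}/C_3 = M_{1+3g}$ again controls the total differential; the $H^1_{\text{sing}}$ of the quotient has the expected rank, forcing the differential to be an isomorphism onto exactly the "extra" summand coming from the $S^{1,0}$-suspension of $C_3$, leaving $H^{*,*}(S^1_{\text{free}})\oplus\Sigma^{1,0}H^{*,*}(S^1_{\text{free}})\oplus(\Sigma^{1,0}H^{*,*}(C_3))^{\oplus 2g}$. Part (2) is handled identically: $N_2^{\text{free}}$ is the Klein bottle with free action, whose cohomology I compute from its description (two free Möbius bands glued along their boundary, i.e. from the cofiber sequence built out of $N_1[1]\setminus D^{2,1}\simeq$ free Möbius band) to be $H^{*,*}(S^1_{\text{free}})$ — note the absence of the extra $\Sigma^{1,0}H^{*,*}(S^1_{\text{free}})$ summand reflects $H^1_{\text{sing}}(N_2;\Z/3)$ being smaller than in the orientable case — and then $\#_3 N_r$ adds $r$ copies of $N_r\setminus D^2 \simeq \vee^{r-1}S^1$, again via Lemma \ref{times C_3} and the quotient lemma.

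Throughout, the main obstacle I anticipate is not the group-level computation (the quotient lemma plus $\M_3$-linearity makes every differential forced once the $q=0$ row is known) but rather the \emph{extension problems}: after identifying $\ker(d)$ and $\coker(d)$ as $\M_3$-modules, one must show the extension $0\to\coker(d)\to H^{*,*}(X)\to\ker(d)\to 0$ splits in the way claimed — in particular that the $S^1_{\text{free}}$-type summands do not interact with the $C_3$-type summands, and that there is no hidden $x$-, $y$-, or $z$-multiplication linking them. I would resolve this the same way the paper does for $EB$ and $N_1[1]$: by running a second, complementary cofiber sequence (e.g. the Puppe rotation, or suspending and using the $S^{0,0}\to S^{2,1}$ presentation) to see the module structure from the other side, and by exploiting that $H^{*,*}(C_3)$ is killed by $y$ and $z$ while the relevant classes in $H^{*,*}(S^1_{\text{free}})$ are not, which forces the splitting. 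A secondary bookkeeping point is getting the exact number of $C_3$-summands right ($2g$ versus $r-1$); this is exactly the first Betti number mod the free $C_3$-action, and it is pinned down by the quotient lemma together with the Euler characteristic relation $\chi(X) = 3\chi(X/C_3)$ for a free action, so no genuine difficulty arises there.
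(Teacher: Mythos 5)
Your overall plan --- cofiber sequences extracted from the equivariant surgery decomposition, the quotient lemma to pin down the $q=0$ row and hence (by $\M_3$-linearity) the total differential, then an extension problem --- is exactly the paper's. But there are concrete errors that would derail the computation. First, the quotient by a free $C_3$-action loses genus: $M_{1+3g}^{\text{free}}/C_3\cong M_{g+1}$, not $M_{1+3g}$ (the quotient of $M_1^{\text{free}}$ is $M_1$, and only one of the three glued-in copies of $\tilde{M}_g$ survives the quotient). With the correct quotient, $H^{1,0}(M_{1+3g}^{\text{free}})\cong(\Z/3)^{2g+2}$, and this forces the total differential to \emph{vanish} --- not, as you claim, to be an isomorphism onto the $\Sigma^{1,0}H^{*,*}(C_3)$ summand. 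Indeed, a differential can only shrink groups, so if you insisted on your wrong rank ($6g+2$) you would hit a contradiction, not a differential. Similarly, $N_r\setminus D^2\simeq\vee^{r}S^1$ (one circle per crosscap), not $\vee^{r-1}S^1$; the paper's own proof of case (2) concludes with $(\Sigma^{1,0}H^{*,*}(C_3))^{\oplus r}$, and the exponent $r-1$ printed in the theorem statement appears to be a typo, since it conflicts with the corollary formula $\tfrac{\beta(X)-2}{3}=r$ stated immediately afterward. Do not let a bookkeeping slip ``confirm'' a wrong target.

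Second, the cofiber of $(\tilde{M}_g\times C_3)_+\hookrightarrow(M_{1+3g}^{\text{free}})_+$ is not $M_1^{\text{free}}$; it is the space the paper calls $\hat{M}_1$, obtained from $M_1^{\text{free}}$ by collapsing the three conjugate boundary circles to a single base point, and its reduced cohomology $\tilde{H}^{*,*}(\hat{M}_1)\cong\Sigma^{1,0}H^{*,*}(C_3)\oplus\Sigma^{1,0}H^{*,*}(S^1_{\text{free}})$ has to be computed separately (the paper does this via the auxiliary cofiber sequence $(C_3)_+\hookrightarrow(M_1^{\text{free}})_+\to\hat{M}_1$); the same remark applies to $\hat{N}_2$ in case (2). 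Your outline skips this intermediate step entirely. Finally, your extension-splitting plan is plausibly right for the base case $M_1^{\text{free}}$, where the paper exploits the retraction $\pi_2\colon M_1^{\text{free}}\cong S^1_{\text{free}}\times S^{1,0}\to S^1_{\text{free}}$ to kill $z$-extensions; but for general $g$ the paper's argument uses the map of cofiber sequences induced by the collapse $\varphi\colon M_{1+3g}^{\text{free}}\to M_1^{\text{free}}$ to show $y$ acts nontrivially and $z$ trivially on $H^{0,q}$, which is more specific than ``running a second, complementary cofiber sequence'' and should be spelled out.
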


The theorem as written depends on the equivariant surgery construction presented in Section \ref{surfacesection}, but a quick translation allows you to state the result completely in terms of its genus and whether or not the surface is orientable.

\begin{corollary}
Let $X$ be a closed and connected surface with a free action of $C_3$. 
\begin{enumerate}
\item If $X$ is orientable, then
\[H^{*,*}(X)\cong H^{*,*}(S^1_{\textup{free}})\oplus \Sigma^{1,0}H^{*,*}(S^1_{\textup{free}})\oplus \left(\Sigma^{1,0}H^{*,*}(C_3)\right)^{\oplus \frac{\beta(X)-2}{3}}.\]
\item If $X$ is non-orientable, then
\[H^{*,*}(X)\cong H^{*,*}(S^1_{\textup{free}})\oplus \left(\Sigma^{1,0}H^{*,*}(C_3)\right)^{\oplus \frac{\beta(X)-2}{3}}.\]
\end{enumerate}
\end{corollary}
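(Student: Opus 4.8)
The plan is to reduce the corollary to Theorem~\ref{freethm}, which is already stated, by a purely combinatorial translation between the surgery-theoretic parametrization of free $C_3$-surfaces and the single numerical invariant $\beta(X)$. By Theorem~\ref{orientableclassification}, every closed connected free $C_3$-surface is either $M_{1+3g}^{\text{free}}$ for some $g\geq 0$ (the orientable case) or $N_{2+3r}^{\text{free}}$ for some $r\geq 0$ (the non-orientable case), so it suffices to compute $\beta$ for each of these families and check that the exponents in Theorem~\ref{freethm} match those in the corollary.

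First I would compute $\beta(X)=\dim_{\Z/2}H^1_{\text{sing}}(X;\Z/2)$ for the two families using the (non-equivariant) classification of surfaces: for an orientable surface of genus $g'$ one has $\beta = 2g'$, and for the non-orientable surface $N_{r'}$ one has $\beta = r'$. Since $M_{1+3g}^{\text{free}}$ has underlying surface $M_{1+3g}$, we get $\beta(M_{1+3g}^{\text{free}}) = 2(1+3g) = 2 + 6g$, hence $g = \frac{\beta(X)-2}{6}$. This looks off by a factor of two compared with the corollary's exponent $\frac{\beta(X)-2}{3}$, so here I would double-check the indexing: the underlying surface of $M_1^{\text{free}}\#_3 M_g$ is obtained by removing three conjugate disks from $M_1$ and gluing in three copies of $M_g\setminus D^2$, which adds $3g$ to the genus, giving underlying genus $1+3g$ and $\beta = 2+6g$; then $2g = \frac{\beta(X)-2}{3}$, which is exactly the exponent appearing in Theorem~\ref{freethm}(1) and in the corollary. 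Similarly $N_{2+3r}^{\text{free}}$ has underlying surface $N_{2+3r}$, so $\beta(X) = 2+3r$, giving $r-1 = \frac{\beta(X)-2}{3} - 1 \cdot\! \frac{3}{3}$... again I would just solve $\beta = 2+3r$ for $r$ to get $r = \frac{\beta(X)-2}{3}$, and then note $r - 1 = \frac{\beta(X)-2}{3} - 1$; comparing with the corollary's exponent $\frac{\beta(X)-2}{3}$ shows a discrepancy, so the careful bookkeeping step is to confirm whether the corollary's non-orientable exponent should be $\frac{\beta(X)-2}{3}$ or $\frac{\beta(X)-2}{3}-1 = \frac{\beta(X)-5}{3}$, and to state the corollary consistently with Theorem~\ref{freethm}(2).

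Given the correct translation, the proof is then immediate: substitute $2g = \frac{\beta(X)-2}{3}$ into Theorem~\ref{freethm}(1) to obtain part (1) of the corollary, and substitute the corresponding expression for $r$ into Theorem~\ref{freethm}(2) to obtain part (2). I would also remark, as the paper does in the introduction, that $\frac{\beta(X)-2}{3}\in\Z$ whenever the action is free, citing the congruence $F(X)\equiv 2-\beta(X)\pmod 3$ with $F(X)=0$ from \cite{Pohl}, so that all the exponents appearing are genuinely non-negative integers and the statement is well-posed.

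The main obstacle is not conceptual but bookkeeping: getting the genus-to-$\beta$ dictionary exactly right for the equivariant connected sums $M_1^{\text{free}}\#_3 M_g$ and $N_2^{\text{free}}\#_3 N_r$, keeping track of how many disks are removed and glued and hence how the genus of the underlying surface grows, and then matching this against both the exponents in Theorem~\ref{freethm} and the exponents as written in the corollary so the two are genuinely equivalent. Everything else is a one-line substitution.
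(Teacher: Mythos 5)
Your approach is exactly the paper's (which doesn't spell it out — it just says ``a quick translation''): use Theorem~\ref{orientableclassification} to reduce to the two families $M_{1+3g}^{\textup{free}}$ and $N_{2+3r}^{\textup{free}}$, compute $\beta$ for each, and substitute into Theorem~\ref{freethm}. Your orientable bookkeeping is right: $\beta(M_{1+3g}) = 2(1+3g) = 2+6g$, so $\frac{\beta-2}{3} = 2g$, matching both the theorem and the corollary.

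The place you hedge is the one place you should have committed. You correctly compute $\beta(N_{2+3r}) = 2+3r$ and hence $\frac{\beta(X)-2}{3} = r$, then notice this disagrees with the exponent $r-1$ in the printed statement of Theorem~\ref{freethm}(2), and conclude by suggesting the corollary might need to be restated to match the theorem. That is the wrong direction. The exponent $r-1$ in Theorem~\ref{freethm}(2) is a typo: the proof of Case (2) general concludes with $H^{*,*}(N_{2+3r}^{\textup{free}})\cong H^{*,*}(S^1_{\textup{free}})\oplus \left(\Sigma^{1,0}H^{*,*}(C_3)\right)^{\oplus r}$ (exponent $r$, not $r-1$), and the base case $r=0$ gives $H^{*,*}(N_2^{\textup{free}})\cong H^{*,*}(S^1_{\textup{free}})$, i.e.\ zero copies, which is consistent with exponent $r$ and nonsensical for $r-1$. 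Even blind to the proof, you could have run the $r=0$ sanity check: $\beta(N_2)=2$ gives $\frac{\beta-2}{3}=0$, which the corollary predicts, and which is forced since no negative number of summands is possible. So the corollary is correct as stated; it is the theorem's printed exponent that needs correction, and your translation $\frac{\beta(X)-2}{3}=r$ is precisely what resolves it. With that one clarification, the rest of your argument (including the integrality remark via $F(X)\equiv 2-\beta(X)\pmod 3$ with $F(X)=0$) is complete and matches the paper.
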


Our proof will proceed as a direct computation of $H^{*,*}(X)$ in each of the two cases presented in the theorem statement. For each case, we start by computing the cohomology of the corresponding building block surface (labeled as a base case in the proof). We then perform a separate computation of the more general case when we have a nontrivial equivariant connected sum.

% [TALK ABOUT HOW WE'RE GOING TO PROVE THE THEOREM BUT NOT THE COROLLARY DIRECTLY...] Theorem \ref{orientableclassification} tells us that $X$ must be of the form $M_{1+3g}^{\text{free}}$ ($g\geq 0$) if it is orientable and $N_{2+3r}^{\text{free}}$ ($r\geq 0$) if it is non-orientable. We prove this result by directly computation for each of these classes of free $C_3$-space.

\begin{notation}
Going forward, we will find ourselves making frequent use of spaces of the form $Y\setminus D^2$ where $Y$ is some (closed) non-equivariant surface. For convenience we establish the notation
\[\tilde{Y}:=Y\setminus D^2\]
which will be used throughout the remainder of the paper.
% For the remainder of this paper, we use the convention that if $Y$ is a non-equivariant surface, then 
% \[\tilde{Y}:= Y\setminus D^2.\]
\end{notation}

\begin{proof}[Proof of Theorem \ref{freethm}, Case (1) base, $M_1^{\textup{free}}$]

% \begin{class}[Base: $M_{1}^{\text{free}}$]
% %genus 1 torus
% We begin by computing $H^{*,*}(M_1^{\text{free}})$.

The rotating torus $M_1^{\text{free}}$ is isomorphic to $S^1_{\text{free}}\times S^{1,0}$. Thus there exists a cofiber sequence 
\[{S^1_{\text{free}}}_+\hookrightarrow \left(M_1^{\text{free}}\right)_+\rightarrow S^{1,0}\wedge \left({S^1_{\text{free}}}_+\right)\]
(see Figure \ref{toruscofib}).
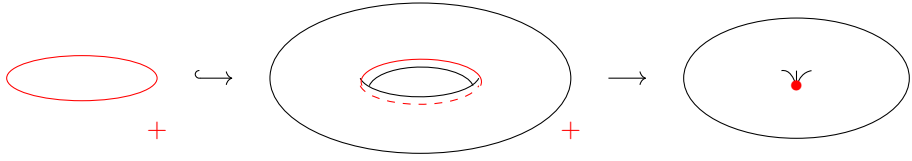
\begin{figure}
\begin{center}
	\begin{tikzpicture}
		\draw[red] (-4.5,0) ellipse (1cm and .3cm);
		\draw[red] (-3.5,-.7) node{$+$};
		\draw (0,0) ellipse (2cm and 1cm);
		\draw (-.8,0) arc (190:350:.8cm and 0.3cm);
		\draw[red,dashed] (-.8,-.1) arc (190:350:.8cm and 0.3cm);
		\draw (.7,-.1) arc (10:170:.7cm and 0.3cm);
		\draw[red] (.8,-.1) arc (-10:190:.8cm and 0.3cm);
		\draw[red] (2,-.7) node{$+$};
		\draw[right hook->] (-3,0)--(-2.5,0);
		\draw[->] (2.5,0)--(3,0);
		\draw (5,0) ellipse (1.5 cm and .8 cm);
		\draw (5,-.10) to[out=110,in=10](4.8,.1);
		\draw (5,-.10) to[out=70,in=190](5.2,.1);
		\draw (5,-.10)--(5,.1);
		\draw[red] (5,-.1) node{$\bullet$};
	\end{tikzpicture}
	\end{center}
	\caption{\label{toruscofib} The cofiber sequence ${S^1_{\text{free}}}_+ \hookrightarrow {M_1^{\text{free}}}_+\rightarrow S^{1,0}\wedge \left({S^1_{\text{free}}}_+\right)$.}
\end{figure}
For each $q$ this gives rise to a long exact sequence on cohomology 
\[\rightarrow \tilde{H}^{p,q}(S^{1,0}\wedge({S^1_{\text{free}}}_+))\rightarrow H^{p,q}(M_1^{\text{free}}) \rightarrow H^{p,q}(S^1_{\text{free}}) \xrightarrow{d^{p,q}} \tilde{H}^{p+1,q}(S^{1,0}\wedge({S^1_{\text{free}}}_+))\rightarrow.\]
Together these long exact sequences have total differential
\[d:= \bigoplus_{p,q}d^{p,q}\colon H^{*,*}\left(S^1_{\text{free}}\right)\rightarrow \tilde{H}^{*+1,*}\left(S^{1,0}\wedge \left({S^1_{\text{free}}}_+\right)\right)\]
which is shown in Figure \ref{freetorusdiff}. To compute $H^{*,*}(M_1^{\text{free}})$, we will analyze the total differential and solve the corresponding extension problem
\[0\rightarrow \coker(d)\rightarrow H^{*,*}(M_1^{\text{free}})\rightarrow \ker(d)\rightarrow 0.\]

\begin{figure}
\begin{center}
	\begin{tikzpicture}[scale=0.6]
		\draw[help lines] (-2.125,-5.125) grid (5.125, 5.125);
		\draw[<->] (-2,0)--(5,0)node[right]{$p$};
		\draw[<->] (0,-5)--(0,5)node[above]{$q$};;
		\cthree{1.25}{1}{blue};
		\cthree{-.25}{1}{red};
		\draw[very thick,->] (.25,.5) -- (1.75,.5);
		\draw[very thick,->] (1.25,1.5) -- (2.75,1.5);
	\end{tikzpicture}
	\end{center}
\caption{\label{freetorusdiff} The differential $d\colon H^{*,*}\left(S^1_{\text{free}}\right)\rightarrow \tilde{H}^{*+1,*}\left(S^{1,0}\wedge \left({S^1_{\text{free}}}_+\right)\right)$.}
\end{figure} 

We can see from Figure \ref{freetorusdiff} that the only possible nonzero differentials are $d^{0,q}$ and $d^{1,q}$. Since $d$ is an $\M_3$-module map, it suffices to compute $d^{0,0}$ and $d^{1,0}$. The quotient Lemma tells us that 
\begin{align*}
H^{p,0}\left(M_1^{\text{free}}\right)&\cong H^p_{\text{sing}}\left(M_1^{\text{free}}/C_3\right) \\
&\cong H^p_{\text{sing}}(M_1)
\end{align*}
which is $\Z/3$ when $p=0,2$ and $\Z/3\oplus \Z/3$ when $p=1$. So $d^{0,0}$ and $d^{1,0}$ must be the zero map, and thus all differentials are zero by linearity. This leaves us to determine if the following extension is trivial:
\[0\rightarrow \Sigma^{1,0} H^{*,*}(S^1_{\text{free}})\rightarrow H^{*,*}(M_1^{\text{free}})\rightarrow H^{*,*}(S^1_{\text{free}})\rightarrow 0.\]
The only other possibility is a non-trivial $z$-extension from $\ker (d)$ to $\coker(d)$. This begs the question: does there exist $\alpha\in H^{0,q}(M_1^{\text{free}})$ so that $z\alpha\neq 0$?

The following composition is the identity map, implying $\pi_2^*$ is injective on cohomology:
\[S^1_{\text{free}}\xrightarrow{\cong} \text{pt}\times S^1_{\text{free}}\hookrightarrow M_1^{\text{free}} \xrightarrow{\pi_2} S^1_{\text{free}}.\]
Since $H^{0,q}(M_1^{\text{free}})$ and $H^{0,q}(S^1_{\text{free}})$ are both $\Z/3$, it must be that $\pi_2^*$ is an isomorphism in degrees $(0,q)$. Now let $\alpha\in H^{0,q}(M_1^{\text{free}})$. Then there exists $\beta\in H^{0,q}(S^1_{\text{free}})$ such that $\alpha=\pi_2^*(\beta)$. Then $z\alpha=\pi_2^*(z\beta)=0$ since $\beta\in H^{*,*}(S^1_{\text{free}})=x^{-1}\M_3/(z)$. Thus the extension is trivial, and
\[H^{*,*}(M_{1}^{\text{free}})\cong H^{*,*}(S^1_{\text{free}})\oplus \Sigma^{1,0}H^{*,*}(S^1_{\text{free}}).\]
\end{proof}

We now turn our attention to the general case. 

\begin{proof}[Proof of Theorem \ref{freethm}, Case (1) general, $M_{3g+1}^{\textup{free}}$]
%genus 3g+1 torus
%
Recall that $M_{3g+1}^{\text{free}}$ can be constructed via the equivariant connected sum: $M_1^{\text{free}}\#_3 M_g$. This construction suggests a map
\[\left(\tilde{M}_g\times C_3\right)_+ \hookrightarrow {M_{3g+1}^{\text{free}}}_+\]
whose cofiber is the $C_3$-space depicted in Figure \ref{M1hat}. We denote this space by $\hat{M}_1$. The three blue points shown in the figure are all identified, making it a single fixed point under the $C_3$-action. In order to utilize the corresponding long exact sequence on cohomology, we first need to compute $\tilde{H}^{*,*}(\hat{M}_1)$.

\begin{figure}
\begin{center}
\includegraphics[scale=.5]{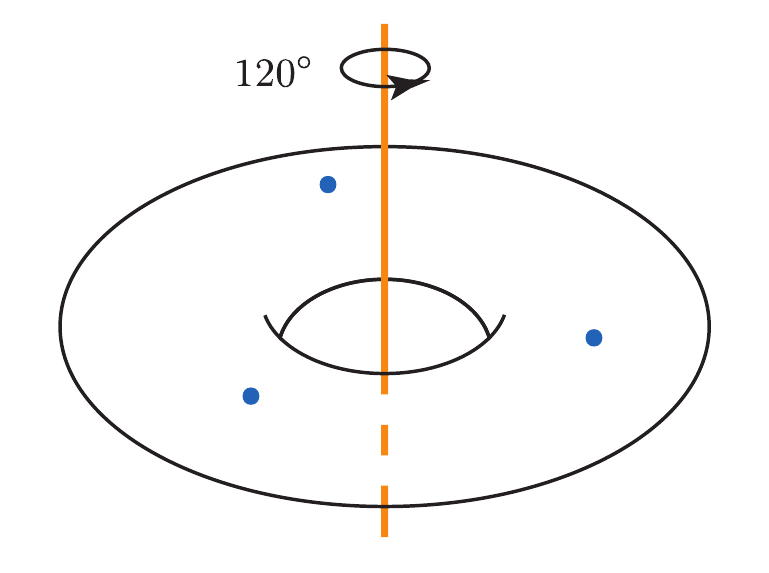}
\end{center}
\caption{\label{M1hat} The space $\hat{M}_1$ where the blue points are identified to a single point.}
\end{figure}

To do this, we use another cofiber sequence
\[{C_3}_+ \hookrightarrow {M_1^{\text{free}}}_+ \rightarrow \hat{M}_1\]
which we can extend to the cofiber sequence
\[{M_1^{\text{free}}}_+ \hookrightarrow \hat{M}_1 \rightarrow S^{1,0}\wedge \left({C_3}_+\right).\]
We next consider the long exact sequence on cohomology which has total differential $d:=\bigoplus_{p,q}d^{p,q}$, where
\[d^{p,q}\colon H^{p,q}(M_1^{\text{free}}) \rightarrow \tilde{H}^{p+1,q}(S^{1,0}\wedge {C_3}_+).\]

\begin{figure}
\begin{center}
	\begin{tikzpicture}[scale=0.6]
		\draw[help lines] (-2.125,-5.125) grid (5.125, 5.125);
		\draw[<->] (-2,0)--(5,0)node[right]{$p$};
		\draw[<->] (0,-5)--(0,5)node[above]{$q$};;
		\cthree{1.25}{1}{red};
		\cthree{-.25}{1}{red};
		\cthree{1}{0}{blue};
		\draw[very thick,->] (.25,.5) -- (1.5,.5);
	\end{tikzpicture}
	\end{center}
\caption{\label{M1hat} The differential $d\colon H^{*,*}(M_1^{\text{free}}) \rightarrow \tilde{H}^{*+1,*}(S^{1,0}\wedge {C_3}_+)$.}
\end{figure} 

We can see from Figure \ref{M1hat} that it suffices to compute $d^{0,0}$. The quotient lemma tells us $\tilde{H}^{0,0}(\hat{M}_1)\cong \tilde{H}^0_{\text{sing}}(\hat{M}_1/C_3)\cong \tilde{H}^0_{\text{sing}}(M_1)=0$. This means $d^{0,0}$ must be an isomorphism. Thus we conclude $d^{0,q}$ is an isomorphism for all $q$. So $\text{coker}(d)=0$ and we have 
\[\tilde{H}^{*,*}(\hat{M}_1)\cong \Sigma^{1,0}H^{*,*}(C_3)\oplus \Sigma^{1,0}H^{*,*}(S^1_{\text{free}}).\] 

Now that we know the cohomology of $\hat{M}_1$, we can return to the cofiber sequence 
\[\left(\tilde{M}_g\times C_3\right)_+ \hookrightarrow {M_{3g+1}^{\text{free}}}_+\rightarrow \hat{M}_1\]
and its corresponding long exact sequence on cohomology. For each $q$, we get an exact sequence with differential
\[d^{p,q}\colon H^{p,q}(\tilde{M}_g\times C_3)\rightarrow \tilde{H}^{p+1,q}(\hat{M}_1).\]
By Lemma \ref{times C_3}, we know $H^{*,*}(\tilde{M}_g\times C_3)\cong \Z/3[x,x^{-1}]\otimes_{\Z/3}H^*_{\text{sing}}(\tilde{M}_g)$. We can see in Figure \ref{genfreetorusdiff} that we only need to compute the differential when $p$ is $0$ or $1$.

\begin{figure}
\begin{center}
	\begin{tikzpicture}[scale=0.6]
		\draw[help lines] (-2.125,-5.125) grid (5.125, 5.125);
		\draw[<->] (-2,0)--(5,0)node[right]{$p$};
		\draw[<->] (0,-5)--(0,5)node[above]{$q$};;
		\cthree{.75}{0}{red};
		\cthree{0}{0}{red};
		\lab{.75}{$2g$}{red};
		\cthree{1}{0}{blue};
		\cthree{1.25}{1}{blue};
		\draw[very thick,->] (.5,.5) -- (1.5,.5);
		\draw[very thick,->] (1.25,1.5) -- (2.75,1.5);
	\end{tikzpicture}
	\end{center}
	\caption{\label{genfreetorusdiff} The differential $d\colon H^{*,*}(\tilde{M}_g\times C_3)\rightarrow \tilde{H}^{*+1,*}(\hat{M}_1)$.}
\end{figure}
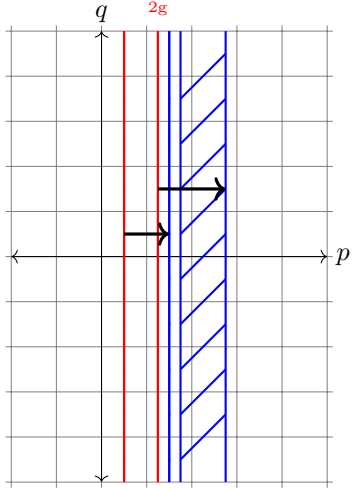 

Again, we know from the quotient lemma that
\begin{align*}
H^{p,0}(M_{3g+1}^{\text{free}})&\cong H^p_{\text{sing}}(M_{3g+1}^{\text{free}}/C_3) \\
&\cong H^p_{\text{sing}}(M_{g+1}).
\end{align*}
In particular, 
\begin{align*}
H^{0,0}(M_{3g+1}^{\text{free}})&=\Z/3 \\
H^{1,0}(M_{3g+1}^{\text{free}})&=\Z/3^{\oplus 2g+2} \\
H^{2,0}(M_{3g+1}^{\text{free}})&=\Z/3.
\end{align*}
So all differentials must be zero. Thus we are left to solve the extension problem 
\[0\rightarrow \tilde{H}^{*,*}(\hat{M}_1) \rightarrow H^{*,*}(M_{3g+1}^{\text{free}}) \rightarrow H^{*+1,*}(\tilde{M}_g\times C_3)\rightarrow 0.\]
All elements of the lower cone of $\M_3$ must act trivially on elements which are infinitely divisible by $x$. So we only need to determine if $y\alpha$ or $z\alpha$ are nonzero for $\alpha\in H^{0,q}(M_{3g+1}^{\text{free}})$. Consider the following map of cofiber sequences: 

% https://q.uiver.app/?q=WzAsNixbMCwwLCIoKE1fZ1xcc2V0bWludXMgRF4yKVxcdGltZXMgQ18zKV8rIl0sWzIsMCwiXFx0aWxkZXtNfV8xIl0sWzEsMCwie01fezNnKzF9XntcXHRleHR7ZnJlZX19fV8rIl0sWzAsMSwie0NfM31fKyJdLFsxLDEsIntNXzFee1xcdGV4dHtmcmVlfX19XysiXSxbMiwxLCJcXHRpbGRle019XzEiXSxbMCwzXSxbMiw0LCJcXHZhcnBoaSIsMl0sWzEsNV0sWzAsMiwiIiwyLHsic3R5bGUiOnsidGFpbCI6eyJuYW1lIjoiaG9vayIsInNpZGUiOiJ0b3AifX19XSxbMiwxXSxbNCw1XSxbMyw0LCIiLDIseyJzdHlsZSI6eyJ0YWlsIjp7Im5hbWUiOiJob29rIiwic2lkZSI6InRvcCJ9fX1dXQ==
\[\begin{tikzcd}
	{(\tilde{M}_g\times C_3)_+} & {{M_{3g+1}^{\text{free}}}_+} & {\hat{M}_1} \\
	{{C_3}_+} & {{M_1^{\text{free}}}_+} & {\hat{M}_1}
	\arrow[from=1-1, to=2-1]
	\arrow["\varphi"', from=1-2, to=2-2]
	\arrow[from=1-3, to=2-3]
	\arrow[hook, from=1-1, to=1-2]
	\arrow[from=1-2, to=1-3]
	\arrow[from=2-2, to=2-3]
	\arrow[hook, from=2-1, to=2-2]
\end{tikzcd}\]

Recall that the differential for the long exact sequence corresponding to the top cofiber sequence was shown to be zero. Moreover, in a previous computation we showed that the differential in the long exact sequence corresponding to ${M_1^{\text{free}}}_+\rightarrow \hat{M}_1\rightarrow {S^{1,0}\wedge C_3}_+$ was always surjective. This implies the differential in the long exact sequence for the bottom cofiber sequence must be 0. So we have the following commutative diagram where the rows are exact:

% https://q.uiver.app/?q=WzAsMTAsWzAsMCwiMCJdLFswLDEsIjAiXSxbMSwwLCJcXHRpbGRle0h9XntwLHF9KFxcdGlsZGV7TX1fMSkiXSxbMiwwLCJIXntwLHF9KE1fezNnKzF9XntcXHRleHR7ZnJlZX19KSJdLFszLDAsIkhee3AscX0oKE1fZ1xcc2V0bWludXMgRF4yKVxcdGltZXMgQ18zKSJdLFsxLDEsIlxcdGlsZGV7SH1ee3AscX0oXFx0aWxkZXtNfV8xKSJdLFsyLDEsIkhee3AscX0oTV8xXntcXHRleHR7ZnJlZX19KSJdLFszLDEsIkhee3AscX0oQ18zKSJdLFs0LDAsIjAiXSxbNCwxLCIwIl0sWzAsMl0sWzEsNV0sWzIsM10sWzMsNF0sWzQsOF0sWzcsOV0sWzYsN10sWzUsNl0sWzUsMiwiXFx0ZXh0e2lkfSIsMl0sWzYsMywiXFx2YXJwaGleKiIsMl0sWzcsNCwiIiwxLHsic3R5bGUiOnsidGFpbCI6eyJuYW1lIjoiaG9vayIsInNpZGUiOiJ0b3AifX19XV0=
\[\begin{tikzcd}
	0 & {\tilde{H}^{p,q}(\hat{M}_1)} & {H^{p,q}(M_{3g+1}^{\text{free}})} & {H^{p,q}(\tilde{M}_g\times C_3)} & 0 \\
	0 & {\tilde{H}^{p,q}(\hat{M}_1)} & {H^{p,q}(M_1^{\text{free}})} & {H^{p,q}(C_3)} & 0
	\arrow[from=1-1, to=1-2]
	\arrow[from=2-1, to=2-2]
	\arrow[from=1-2, to=1-3]
	\arrow[from=1-3, to=1-4]
	\arrow[from=1-4, to=1-5]
	\arrow[from=2-4, to=2-5]
	\arrow[from=2-3, to=2-4]
	\arrow[from=2-2, to=2-3]
	\arrow["{\text{id}}"', from=2-2, to=1-2]
	\arrow["{\varphi^*}"', from=2-3, to=1-3]
	\arrow[hook, from=2-4, to=1-4]
\end{tikzcd}\]

Row exactness implies $\varphi^*$ is injective. In fact, $\varphi^*$ must be an isomorphism in dimension $(0,q)$ for all $q$ since both the domain and the codomain are $\Z/3$. Let $\alpha\in H^{0,q}(M_1^{\text{free}})$. Then $\varphi^*(y\alpha)=y\varphi^*(\alpha)$. We know $y\alpha\neq 0$ in $H^{1,q+1}(M_1^{\text{free}})$, so injectivity implies $y\varphi^*(\alpha)\neq 0$. Surjectivity in degrees $(0,q)$ implies $y\beta\neq 0$ for all nonzero $\beta\in H^{0,q}(M_{3g+1}^{\text{free}})$. Also note that $\varphi^*$ must be an isomorphism in degrees $(2,q)$ for all $q$. We know $z\varphi^*(\alpha)=\varphi^*(z\alpha)=0$ since $z\alpha=0$ in $H^{2,q+1}(M_1^{\text{free}})$. So the action of $z$ on $H^{0,q}(M_{3g+1}^{\text{free}})$ must be 0. Putting this together, we conclude
\[H^{*,*}(M_{3g+1}^{\text{free}})\cong H^{*,*}(S^1_{\text{free}})\oplus \Sigma^{1,0}H^{*,*}(S^1_{\text{free}})\oplus\left(\Sigma^{1,0}H^{*,*}(C_3)\right)^{\oplus 2g}.\]
\end{proof}
%\end{class}

\begin{proof}[Proof of Theorem \ref{freethm}, Case (2) base, $N_2^{\textup{free}}$]
We compute the cohomology of all free non-orientable $C_3$-surfaces, starting with the free Klein bottle defined in Section \ref{surfacesection}. 

To compute the cohomology of this space, we start with the cofiber sequence
\[{S^1_{\text{free}}}_+ \hookrightarrow {N_2^{\text{free}}}_+\rightarrow N_1[1]\]
which we can see illustrated in Figure \ref{N_2 cofib seq}. Note that the cofiber of the map $f$ is isomorphic to $N_1[1]$, whose cohomology we have already seen in Example \ref{N_1[1] cohomology}. In particular, $\tilde{H}^{*,*}(N_1[1])=0$, so we can immediately conclude
\[H^{*,*}(N_2^{\text{free}})\cong H^{*,*}(S^1_{\text{free}}).\] 

\begin{figure}
\begin{center}
\includegraphics[scale=.5]{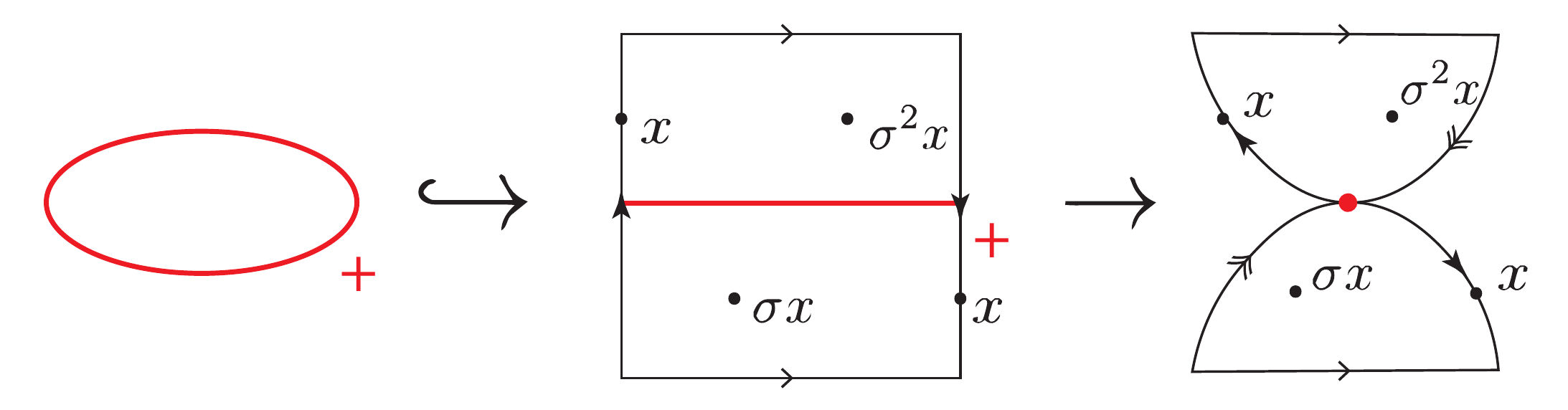}
\end{center}
\caption{\label{N_2 cofib seq} The cofiber sequence ${S^1_{\text{free}}}_+\xrightarrow{f} {N_2^{\text{free}}}_+\rightarrow\operatorname{cofib}(f)\simeq N_1[1]$.}
\end{figure}
\end{proof}

% \begin{figure}
% \begin{center}
% \includegraphics[scale=.5]{}
% \end{center}
% \caption{\label{N_2 cofib seq 2} The cofiber of the map $f$ above is equivalent to the space $N_1[1]$.}
% \end{figure}

\begin{proof}[Proof of Theorem \ref{freethm}, Case (2) general, $N_{2+3r}^{\textup{free}}$]
We turn to the general case of $N_{2+3r}^{\text{free}}=N_2^{\text{free}}\#_3 N_r$ for $r\geq 1$. For this we consider the cofiber sequence 
\begin{equation}\label{freenonorientablecofiber}
\left(\tilde{N}_r\times C_3\right)_+ \hookrightarrow {N_{2+3r}^{\text{free}}}_+ \rightarrow \hat{N}_2
\end{equation}
where $\hat{N}_2$ is the mapping cone of this inclusion. To make use of this cofiber sequence, we first must compute the reduced cohomology of the space $\hat{N}_2$.

The space $\hat{N}_2$ can be realized as the cofiber of the map ${C_3}_+\hookrightarrow {N_2^{\text{free}}}_+$. Using the Puppe sequence, we can instead consider the cofiber sequence
\[{N_2^{\text{free}}}_+\rightarrow \hat{N}_2\rightarrow \Sigma^{1,0}{C_3}_+\]
and its corresponding long exact sequence on cohomology
\[\cdots \rightarrow  \tilde{H}^{p,q}(\Sigma^{1,0} {C_3}_+)\rightarrow \tilde{H}^{p,q}(\hat{N}_2)\rightarrow H^{p,q}(N_2^{\text{free}})\xrightarrow{d} \tilde{H}^{p+1,q}(\Sigma^{1,0} {C_3}_+)\rightarrow \cdots.\]

\begin{figure}
\begin{center}
	\begin{tikzpicture}[scale=0.6]
		\draw[help lines] (-2.125,-5.125) grid (5.125, 5.125);
		\draw[<->] (-2,0)--(5,0)node[right]{$p$};
		\draw[<->] (0,-5)--(0,5)node[above]{$q$};;
		\cthree{0}{1}{red};
		\cthree{1.25}{0}{blue};
		\draw[very thick,->] (.5,.5) -- (1.75,.5);
	\end{tikzpicture}
	\end{center}
\caption{\label{N2hat} The differential $d\colon H^{*,*}(N_2^{\text{free}})\rightarrow \tilde{H}^{*+1,*}(\Sigma^{1,0}{C_3}_+)$.}
\end{figure}
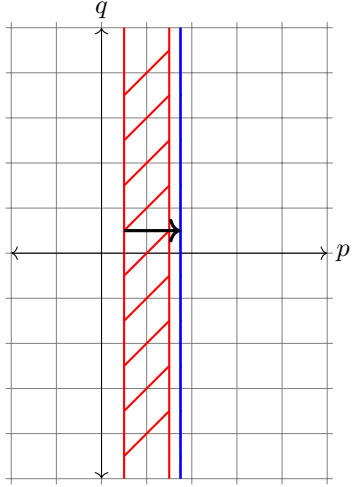

Our goal is to compute the differential of this sequence, which can be seen in Figure \ref{N2hat}. First notice that $\hat{N}_2/C_3\simeq N_2$, so by the quotient lemma we have $\tilde{H}^{p,0}(\hat{N}_2)\cong \tilde{H}^p_{\text{sing}}(N_2)$ which is $\Z/3$ for $p=1$ and 0 otherwise. In particular, $\tilde{H}^{0,0}(\hat{N}_2)=0$ which implies the differential 
\[d^{0,q}\colon H^{0,q}(N_2^{\text{free}})\rightarrow \tilde{H}^{1,q}(\Sigma^{1,0}{C_3}_+)\]
is an isomorphism for $q=0$. By linearity, we can conclude that this differential is in fact an isomorphism for all $q$. So $\coker(d)=0$ and $\tilde{H}^{*,*}(\hat{N}_2)\cong\ker(d)$. In particular,
\[\tilde{H}^{*,*}(\hat{N}_2) \cong \Sigma^{1,0}H^{*,*}(C_3).\]

We can now turn back to our original cofiber sequence (\ref{freenonorientablecofiber}) and examine its corresponding long exact sequence on cohomology
\[\cdots \rightarrow \tilde{H}^{p,q}(\hat{N}_2)\rightarrow H^{p,q}(N_{2+3r}^{\text{free}})\rightarrow H^{p,q}(\tilde{N}_r\times C_3)\xrightarrow{d} \tilde{H}^{p+1,q}(\hat{N}_2)\rightarrow\cdots. \]
As in previous examples, our strategy is to compute the total differential
\[d\colon H^{*,*}(\tilde{N}_r\times C_3)\rightarrow\tilde{H}^{*+1,*}(\hat{N}_2)\]
as seen in Figure \ref{generalfreenonorientablediff}.
%Recall that by Lemma \ref{times C_3}, $H^{p,q}(\tilde{N}_r\times C_3)\cong \Z/3[x,x\inv] \oplus \left(\Sigma^{1,0} \Z/3[x,x\inv]\right)^{\oplus r}$. \\

Since $N_{2+3r}^{\text{free}}/C_3\simeq N_{2+r}$, we know by the quotient lemma that $H^{p,0}(N_{2+3r}^{\text{free}})\cong H^p_{\text{sing}}(N_{2+r})$ which is $\Z/3$ for $p=0$, $(\Z/3)^{r+1}$ when $p=1$, and 0 otherwise. Linearity of the differential guarantees that this map is zero in all degrees.

\begin{figure}
\begin{center}
	\begin{tikzpicture}[scale=0.6]
		\draw[help lines] (-2.125,-5.125) grid (5.125, 5.125);
		\draw[<->] (-2,0)--(5,0)node[right]{$p$};
		\draw[<->] (0,-5)--(0,5)node[above]{$q$};;
		\cthree{0}{0}{red};
		\cthree{1}{0}{red};
		\cthree{1.25}{0}{blue};
		\lab{1}{$r$}{red};
		\draw[very thick,->] (.5,.5) -- (1.75,.5);
	\end{tikzpicture}
	\end{center}
\caption{\label{generalfreenonorientablediff} The differential to (\ref{freenonorientablecofiber}), $d\colon H^{*,*}(\tilde{N}_r\times C_3)\rightarrow\tilde{H}^{*+1,*}(\hat{N}_2)$.}
\end{figure}
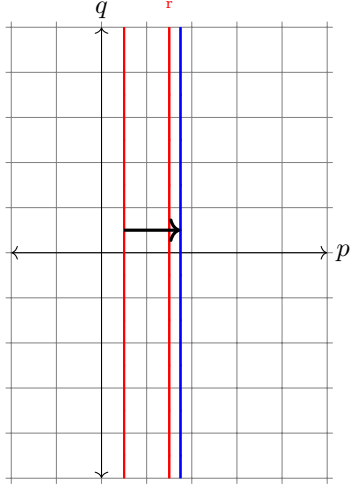

All that remains is to solve the extension problem
\[0\rightarrow \tilde{H}^{*,*}(\hat{N}_2)\rightarrow H^{*,*}(N_{2+3r}^{\text{free}})\rightarrow H^{*,*}(\tilde{N}_r\times C_3)\rightarrow 0.\]
In particular, we need to determine if $y\alpha$ is nonzero for $\alpha\in H^{0,q}(N_{2+3r}^{\text{free}})$. Consider the following map of cofiber sequences: 
% https://q.uiver.app/?q=WzAsNixbMCwwLCIoXFx0aWxkZXtOfVxcdGltZXMgQ18zKV8rIl0sWzEsMCwiTl97Miszcn1ee1xcdGV4dHtmcmVlfX0iXSxbMiwwLCJcXGhhdHtOfV8yXntcXHRleHR7ZnJlZX19Il0sWzAsMSwie0NfM31fKyJdLFsxLDEsIk5fMl57XFx0ZXh0e2ZyZWV9fSJdLFsyLDEsIlxcaGF0e059XzJee1xcdGV4dHtmcmVlfX0iXSxbMCwzXSxbMSw0LCJxIl0sWzIsNV0sWzAsMSwiIiwxLHsic3R5bGUiOnsidGFpbCI6eyJuYW1lIjoiaG9vayIsInNpZGUiOiJ0b3AifX19XSxbMSwyXSxbMyw0LCIiLDEseyJzdHlsZSI6eyJ0YWlsIjp7Im5hbWUiOiJob29rIiwic2lkZSI6InRvcCJ9fX1dLFs0LDVdXQ==
\[\begin{tikzcd}
	{(\tilde{N}\times C_3)_+} & {N_{2+3r}^{\text{free}}} & {\hat{N}_2} \\
	{{C_3}_+} & {N_2^{\text{free}}} & {\hat{N}_2}
	\arrow[from=1-1, to=2-1]
	\arrow["q", from=1-2, to=2-2]
	\arrow[from=1-3, to=2-3]
	\arrow[hook, from=1-1, to=1-2]
	\arrow[from=1-2, to=1-3]
	\arrow[hook, from=2-1, to=2-2]
	\arrow[from=2-2, to=2-3]
\end{tikzcd}\]
The differential for each of the corresponding long exact sequences was found in the above computations to be zero. Thus we have the following commutative diagram where the rows are exact:
% https://q.uiver.app/?q=WzAsMTAsWzAsMCwiMCJdLFswLDEsIjAiXSxbNCwwLCIwIl0sWzQsMSwiMCJdLFsxLDAsIlxcdGlsZGV7SH1eeyosKn0oXFxoYXR7Tn1fMl57XFx0ZXh0e2ZyZWV9fSkiXSxbMSwxLCJcXHRpbGRle0h9XnsqLCp9KFxcaGF0e059XzJee1xcdGV4dHtmcmVlfX0pIl0sWzIsMCwiSF57KiwqfShOX3syKzNyfV57XFx0ZXh0e2ZyZWV9fSkiXSxbMiwxLCJIXnsqLCp9KE5fezJ9XntcXHRleHR7ZnJlZX19KSJdLFszLDAsIkheeyosKn0oXFx0aWxkZXtOX3J9XFx0aW1lcyBDXzMpIl0sWzMsMSwiSF57KiwqfShDXzMpIl0sWzAsNF0sWzQsNl0sWzYsOF0sWzgsMl0sWzEsNV0sWzUsN10sWzcsOV0sWzksM10sWzUsNCwiaWQiLDJdLFs3LDYsInFeKiIsMl0sWzksOCwiIiwxLHsic3R5bGUiOnsidGFpbCI6eyJuYW1lIjoiaG9vayIsInNpZGUiOiJ0b3AifX19XV0=
\[\begin{tikzcd}
	0 & {\tilde{H}^{*,*}(\hat{N}_2)} & {H^{*,*}(N_{2+3r}^{\text{free}})} & {H^{*,*}(\tilde{N_r}\times C_3)} & 0 \\
	0 & {\tilde{H}^{*,*}(\hat{N}_2)} & {H^{*,*}(N_{2}^{\text{free}})} & {H^{*,*}(C_3)} & 0
	\arrow[from=1-1, to=1-2]
	\arrow[from=1-2, to=1-3]
	\arrow[from=1-3, to=1-4]
	\arrow[from=1-4, to=1-5]
	\arrow[from=2-1, to=2-2]
	\arrow[from=2-2, to=2-3]
	\arrow[from=2-3, to=2-4]
	\arrow[from=2-4, to=2-5]
	\arrow["id"', from=2-2, to=1-2]
	\arrow["{q^*}"', from=2-3, to=1-3]
	\arrow[hook, from=2-4, to=1-4]
\end{tikzcd}\]
Row exactness implies that $q^*$ is injective. Moreover, for nonzero $\beta\in H^{0,q}(N_2^{\text{free}})$ we know that $y\beta\neq 0$ in $H^{1,q+1}(N_2^{\text{free}})$. Thus for any nonzero $\alpha\in H^{0,q}(N_{2+3r}^{\text{free}})$, we know $\alpha=q^*(\beta)$ for some nonzero $\beta\in H^{0,q}(N_2^{\text{free}})$. By the above remarks, it follows that $y\alpha =yq^*(\beta)=q^*(y\beta)\neq 0$. So we can conclude
\[H^{*,*}(N_{2+3r}^{\text{free}})\cong H^{*,*}(S^1_{\text{free}})\oplus \left(\Sigma^{1,0}H^{*,*}(C_3)\right)^{\oplus r}.\]
\end{proof}

\section{Cohomology Computations of Non-free $C_3$-surfaces}\label{computationsII}

\resettheoremcounters

We next prove Theorem \ref{introthm2} from the introduction. We obtain this as a corollary of the following theorem which is stated in the language of equivariant surgery. 

\begin{theorem}\label{classificationversion}
    The following are true for all $g,r,k\geq 0$.
    \begin{enumerate}\setlength{\itemsep}{.4\baselineskip}
        \item $\displaystyle{H^{*,*}(\Sph_{2k+3g}[2k+2])\cong\M_3\oplus\Sigma^{2,1}\M_3 \oplus \ebc^{\oplus 2k}\oplus \left(\Sigma^{1,0}H^{*,*}(C_3)\right)^{\oplus 2g}}$.
        \item$\displaystyle{H^{*,*}(\Hex_{n,(3n-2)+2k+3g}[3n+2k]) \cong\M_3\oplus\Sigma^{2,1}\M_3\\ \oplus \ebc^{\oplus (3n-2+2k)}\oplus\left(\Sigma^{1,0}H^{*,*}(C_3)\right)^{\oplus 2g}}$.
        \item $\displaystyle{H^{*,*}(N_{4k+3r}[2k+2])\cong \M_3\oplus\ebc^{\oplus 2k}\oplus\left(\Sigma^{1,0}H^{*,*}(C_3)\right)^{\oplus r-1}}$, \qquad $(r\geq 1)$.
        \item $\displaystyle{H^{*,*}(N_{1+4k+3r}[2k+1])\cong\M_3\oplus\ebc^{\oplus 2k}\oplus\left(\Sigma^{1,0}H^{*,*}(C_3)\right)^{\oplus r}}$.
    \end{enumerate}
\end{theorem}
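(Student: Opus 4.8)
The plan is to prove Theorem~\ref{classificationversion} exactly as Theorem~\ref{freethm} was proved: each equivariant surgery description yields a cofiber sequence, the associated long exact sequences assemble into one total differential which is an $\M_3$-module map, that differential is determined in the integer grading $(\ast,0)$ by the quotient lemma applied to the (known) orbit surface, $\M_3$-linearity propagates it to all weights, and what is left is an extension problem of $\M_3$-modules. I would organize the argument family by family, and inside each family by ``base case, then general case'', isolating a few reusable computations: (A) the reduced cohomology of the building blocks $S^{2,1}$, $\Hex_n$, and $N_1[1]$; (B) the effect of one $C_3$-ribbon surgery $X \mapsto X + [R_3]$; and (C) the effect of an equivariant connected sum $X \mapsto X \#_3 M_g$ or $X \mapsto X \#_3 N_r$.

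For (A): $H^{\ast,\ast}(S^{2,1}) \cong \M_3 \oplus \Sigma^{2,1}\M_3$ and $H^{\ast,\ast}(N_1[1]) \cong \M_3$ are already in hand (the latter is Example~\ref{N_1[1] cohomology}), so only $\tilde H^{\ast,\ast}(\Hex_n)$ needs work, and I would get it by induction on $n$. The construction of $\Hex_n$ from $\Hex_{n-1}$ and $\Hex_1$ (remove three conjugate disks from each and glue along a degree $-1$ map) gives a cofiber sequence whose cofiber involves a copy of $\Hex_1$ with three conjugate disks collapsed; since $\Hex_n/C_3 \simeq S^2$, the quotient lemma kills the relevant differential in integer grading, and after solving the resulting extension I expect $\tilde H^{\ast,\ast}(\Hex_n) \cong \Sigma^{2,1}\M_3 \oplus \ebc^{\oplus(3n-2)}$, with $\Hex_1$ contributing $\Sigma^{2,1}\M_3 \oplus \ebc$ and each further attachment contributing $\ebc^{\oplus 3}$.

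For (B) and (C) I would establish once and for all that, for any $C_3$-surface $X$ with at least one fixed point and an equivariant disk disjoint from its conjugates, $H^{\ast,\ast}(X + [R_3]) \cong H^{\ast,\ast}(X) \oplus \ebc^{\oplus 2}$; that $H^{\ast,\ast}(X \#_3 M_g) \cong H^{\ast,\ast}(X) \oplus (\Sigma^{1,0}H^{\ast,\ast}(C_3))^{\oplus 2g}$; and the corresponding statement for $\#_3 N_r$. Each is proved from a cofiber sequence of the shape ``$(\text{free piece})_+ \hookrightarrow X'_+ \to \text{cofiber}$'' extracted from the surgery, just as $\hat{M}_1$, $\hat{N}_2$, $EB$, and $N_1[1]$ were handled: the cofiber term is pinned down by the quotient lemma in weight $0$ and by the forgetful map together with an auxiliary comparison cofiber sequence in the other weights, while Lemma~\ref{times C_3} computes the free piece $\tilde M_g \times C_3$ or $\tilde N_r \times C_3$. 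The feature separating the orientable from the non-orientable connected sum is that for $\#_3 M_g$ the orbit surface has the expected Betti numbers, so the total differential vanishes and the extension is trivial as in the free case, whereas for $\#_3 N_r$ the non-orientability changes the rank of $H^1$ of the orbit surface and forces one differential to be an isomorphism; tracing which summand it hits shows that $\#_3 N_r$ destroys the $\Sigma^{2,1}\M_3$ summand when the base (e.g.\ $S^{2,1}+k[R_3]$) is orientable, giving $(\Sigma^{1,0}H^{\ast,\ast}(C_3))^{\oplus r-1}$, but contributes the full $(\Sigma^{1,0}H^{\ast,\ast}(C_3))^{\oplus r}$ when the base (e.g.\ $N_1[1]+k[R_3]$) is already non-orientable. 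Assembling (A)--(C) gives all four families: part (1) is $S^{2,1}$, then $k$ ribbons, then $\#_3 M_g$; part (2) is $\Hex_n$, then $k$ ribbons, then $\#_3 M_g$; part (3) is $S^{2,1}$, then $k$ ribbons, then $\#_3 N_r$; part (4) is $N_1[1]$, then $k$ ribbons, then $\#_3 N_r$.

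The main obstacle I anticipate is controlling the extension problems once $\M_3$, $\ebc$, and $H^{\ast,\ast}(C_3)$ all appear as summands. In the free case every module was $x$-periodic with $z$ acting as $0$, so at most a single $z$- or $y$-multiplication was in question; here one must repeatedly decide whether $y$- and $z$-multiplications within and between the top and bottom cones are nonzero, and each such decision needs a purpose-built map of $C_3$-spaces to detect it --- a section of the orbit map over a fixed point (cf.\ Remark~\ref{M3summand}), a coordinate projection as for $M_1^{\text{free}}$, or a comparison of cofiber sequences as for $M_{3g+1}^{\text{free}}$. A closely related point requiring care is that iterated ribbon surgeries genuinely produce $\ebc^{\oplus 2k}$ as a direct summand and not a nonsplit self-extension, i.e.\ that at each stage the new classes in degrees $(1,1)$ and $(2,1)$ are not hit by $y$ or $z$ from the previously built submodule; and the $\#_3 N_r$ step is where these difficulties are most acute, since there the differential is genuinely nonzero and the surviving module changes shape.
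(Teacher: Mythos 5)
Your overall plan is in the right spirit and matches the paper's at the level of architecture: each surgery description yields a cofiber sequence, the long exact sequences have an $\M_3$-linear total differential pinned down in weight $0$ by the quotient lemma, and the answer is then obtained by solving an extension problem. Your reorganization into reusable ``one ribbon surgery adds $\ebc^{\oplus 2}$'' and ``$\#_3 M_g$ adds $(\Sigma^{1,0}H^{*,*}(C_3))^{\oplus 2g}$'' statements is essentially the paper's inductive step repackaged, and your bookkeeping (e.g.\ that $\#_3 N_r$ removes $\Sigma^{2,1}\M_3$ from an orientable base while $N_1[1]+k[R_3]$ starts without one) agrees with the actual answers. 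The one structural caveat is that these ``generic'' lemmas cannot quite be stated for an arbitrary $X$ with a fixed point, because the differential analysis requires knowing $H^{*,*}(X)$ (e.g.\ to see that nothing lives in degree $(3,1)$); in practice they still have to be proved inductively family by family, which is what the paper does.

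The genuine gap is in how you propose to resolve the extension problems once $\ebc$ appears. You correctly identify this as the hard step, but the tools you list --- a section of the orbit map over a fixed point, a coordinate projection, or a comparison of cofiber sequences --- are the tools the paper uses in the \emph{free} case, where every module in sight is $x$-periodic and at most one $y$- or $z$-multiplication is in doubt. They do not suffice here. Once several copies of $\ebc$ and a $\Sigma^{2,1}\M_3$ are present, there is no obvious $C_3$-map that detects whether a new class in degree $(1,1)$ or $(2,1)$ extends nontrivially over the previously built submodule, and the paper does not attempt to build one. Instead the paper settles the question purely algebraically: it constructs an explicit free resolution of $\ebc$ over $\M_3$ and computes
\[
\operatorname{Ext}^{1,(0,0)}_{\M_3}(\ebc,\ebc)=0,\qquad
\operatorname{Ext}^{1,(0,0)}_{\M_3}(\ebc,\Sigma^{2,1}\M_3)=0,\qquad
\operatorname{Ext}^{1,(0,0)}_{\M_3}\bigl(\ebc,\Sigma^{1,0}H^{*,*}(C_3)\bigr)=0,
\]
(Lemmas \ref{EBextensions}, \ref{otherextensions}, and \ref{towerextensions}). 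Combined with the freeness of $\M_3$ and the observation that low-degree elements cannot support $y$- or $z$-multiplications into $(\Sigma^{1,0}H^{*,*}(C_3))$-type summands, these three vanishings settle every extension problem in the non-free case at once. Without something equivalent to these Ext computations, your plan stalls exactly where you predict it will, so you should add them as the missing algebraic ingredient rather than hoping a topological detection map exists.
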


Presented in this way, it is immediate that the cohomology of a $C_3$-space is determined by its construction via equivariant surgeries as stated in Theorem \ref{nonorientableclassification}. In reality, the cohomology of a given $X$ only depends on $F(X)$, $\beta(X)$, and whether or not $X$ is orientable. It can be quickly verified that the following is a consequence of Theorem \ref{classificationversion}: 

\begin{corollary}%\label{big theorem}
Let $X$ be a $C_3$-surface.
\begin{enumerate}
\item If $X$ is orientable, then 
\[H^{*,*}(X)\cong \M_3\oplus \Sigma^{2,1}\M_3 \oplus \ebc^{\oplus F(X)-2}\oplus \left(\Sigma^{1,0}H^{*,*}(C_3)\right)^{\oplus \frac{\beta(X)-2F(X)+4}{3}}.\]
\item If $X$ is non-orientable and $F(X)$ is even, then
\[H^{*,*}(X)\cong \M_3\oplus \ebc^{\oplus F(X)-2}\oplus \left(\Sigma^{1,0}H^{*,*}(C_3)\right)^{\oplus \frac{\beta(X)-2F(X)+1}{3}}.\]
\item If $X$ is non-orientable and $F(X)$ is odd, then
\[H^{*,*}(X)\cong \M_3\oplus \ebc^{\oplus F(X)-1}\oplus\left(\Sigma^{1,0}H^{*,*}(C_3)\right)^{\oplus \frac{\beta(X)-2F(X)+1}{3}}.\]
\end{enumerate}
\end{corollary}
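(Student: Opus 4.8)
The plan is to deduce this corollary from Theorem~\ref{classificationversion} by converting the equivariant-surgery bookkeeping into the numerical invariants $F(X)$ and $\beta(X)$. Since the $\Sigma^{2,1}\M_3$ summand in part~(1) forces $F(X)>0$, we may assume this throughout, and then Theorem~\ref{nonorientableclassification} tells us that $X$ is isomorphic to exactly one of $\Sph_{2k+3g}[2k+2]$, $\Hex_{n,(3n-2)+2k+3g}[3n+2k]$, $N_{4k+3r}[2k+2]$ (with $r\geq 1$), or $N_{1+4k+3r}[2k+1]$, for suitable $n\geq 1$ and $k,g,r\geq 0$. In each family $F(X)$ is the bracketed integer and the genus is the lone subscript, so I would compute $\beta(X)$ from the genus using the standard facts $\dim_{\Z/2}H^1_{\textup{sing}}(M_g;\Z/2)=2g$ and $\dim_{\Z/2}H^1_{\textup{sing}}(N_g;\Z/2)=g$.

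For the two orientable families, the identities to check are as follows. For $\Sph_{2k+3g}[2k+2]$ we have $F(X)=2k+2$ and $\beta(X)=2(2k+3g)=4k+6g$, so $F(X)-2=2k$ and $\frac{\beta(X)-2F(X)+4}{3}=2g$; for $\Hex_{n,(3n-2)+2k+3g}[3n+2k]$ we have $F(X)=3n+2k$ and $\beta(X)=2\big((3n-2)+2k+3g\big)$, so $F(X)-2=(3n-2)+2k$ and again $\frac{\beta(X)-2F(X)+4}{3}=2g$. Thus in both cases the module produced by Theorem~\ref{classificationversion} is exactly the right-hand side of part~(1) evaluated at the invariants of $X$, and part~(1) follows because every orientable $X$ with $F(X)>0$ is of one of these two types. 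For the non-orientable families: for $N_{4k+3r}[2k+2]$ (with $r\geq 1$) the value $F(X)=2k+2$ is even, $\beta(X)=4k+3r$, and $F(X)-2=2k$, $\frac{\beta(X)-2F(X)+1}{3}=r-1$, giving part~(2); for $N_{1+4k+3r}[2k+1]$ the value $F(X)=2k+1$ is odd, $\beta(X)=1+4k+3r$, and $F(X)-1=2k$, $\frac{\beta(X)-2F(X)+1}{3}=r$, giving part~(3). Since $2k+2$ is always even and $2k+1$ always odd, the parity of $F(X)$ distinguishes these two families, so the case split in the statement is the correct one.

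I do not expect a genuine obstacle here: the substantive ingredients are the exhaustiveness of the four families (Theorem~\ref{nonorientableclassification}), Theorem~\ref{classificationversion}, and the two Betti-number facts above, after which the identities are immediate. Two points merit a sentence. First, well-definedness: the corollary asserts a formula depending only on $(F(X),\beta(X),\textup{orientability})$, and the computations show that all orientable cases collapse to a single expression in these data (and likewise for the non-orientable cases), which is consistent with the non-uniqueness observed in Remark~\ref{badnews}. Second, that $\frac{\beta(X)-2F(X)+4}{3}$ and $\frac{\beta(X)-2F(X)+1}{3}$ are nonnegative integers requires no extra argument: integrality follows from the congruence $F(X)\equiv 2-\beta(X)\pmod{3}$ recalled in the introduction, and is in any case visible from the closed forms $2g$, $r-1$, $r$ obtained above.
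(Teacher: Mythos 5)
Your proposal is correct and follows essentially the same route as the paper: the paper states that the corollary is a quick verification from Theorem~\ref{classificationversion}, and in the preambles to each case of that theorem's proof it carries out exactly the arithmetic you write out (e.g., for $\Sph_{2k+3g}[2k+2]$ computing $F-2=2k$ and $\frac{\beta-2F+4}{3}=2g$, for $N_{1+4k+3r}[2k+1]$ computing $F-1=2k$ and $\frac{\beta-2F+1}{3}=r$, etc.). One minor quibble: the remark that ``the $\Sigma^{2,1}\M_3$ summand in part~(1) forces $F(X)>0$'' reads as deducing the hypothesis from the conclusion; the cleaner framing is simply that the corollary (matching Theorem~\ref{introthm2}) is implicitly stated for the nonfree case, the free case being covered separately in Section~\ref{computationsI}.
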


\begin{remark}
Since $H^{*,*}(X)$ is determined by $\beta(X)$, $F(X)$, and whether or not $X$ is orientable, it follows from the observations in Remark \ref{badnews} that $RO(C_3)$-graded Bredon cohomology in $\underline{\Z/3}$ coefficients is not a complete invariant. %It cannot distinguish between isomorphism classes of $C_3$-spaces. \\

This is true in the case of both orientable and non-orientable surfaces. We reference the comments of Remark \ref{badnews} and observe for instance that $H^{*,*}(\Hex_{2,4}[6])\cong H^{*,*}(\Sph_{4}[6])$. An example of this can also be found in the non-orientable surfaces $N_3[2]$ and $N_1[1]$.
\end{remark}

We will prove this result by directly computing the cohomology of all non-free $C_3$-surfaces. These computations will be broken up into four classes of non-free surfaces according to our classification in Theorem \ref{nonorientableclassification}.

The techniques used in this section to determine the additive cohomology structure are similar to those used previously. However the extension problems required to understand the $\M_3$-module structure in these cases require a bit more work. We begin by considering several lemmas which will eventually aid in solving these extension problems.

\begin{lemma}\label{EBextensions}
The group $\operatorname{Ext}^{1,(0,0)}_{\M_3}\left(\ebc,\ebc\right)$ is trivial. In particular, given a short exact sequence of $\M_3$-modules
\[0\rightarrow\ebc\hookrightarrow X \twoheadrightarrow \ebc\rightarrow 0\]
it must be that $X\cong \ebc\oplus \ebc$.
\end{lemma}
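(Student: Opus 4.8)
The plan is to compute $\operatorname{Ext}^{1,(0,0)}_{\M_3}(\ebc,\ebc)$ directly from a free (or at least projective) resolution of $\ebc$ as an $\M_3$-module, or—more efficiently—to argue geometrically/structurally that any extension splits. I would pursue the structural route first, as it avoids the bookkeeping of an explicit resolution. Recall from Example \ref{eb} that $\ebc$ is generated by two elements $\alpha$ in degree $(2,1)$ and $\beta$ in degree $(1,1)$, subject to the relations $y\beta = 0$ and $z\beta = y\alpha$ (together with the relations inherited from $\M_3$ governing the bottom cone, where $\beta$ and its $x$-multiples also generate a copy of the bottom cone structure). So a short exact sequence $0 \to \ebc \hookrightarrow X \twoheadrightarrow \ebc \to 0$ is determined by choosing lifts $\tilde\alpha, \tilde\beta \in X$ of the generators of the quotient $\ebc$, and the extension class measures the failure of $y\tilde\beta = 0$ and $z\tilde\beta = y\tilde\alpha$ in $X$; the correction terms $y\tilde\beta$ and $z\tilde\beta - y\tilde\alpha$ live in the sub-$\ebc$ in degrees $(2,2)$ and $(3,2)$ respectively.

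The key step is then to show these correction terms can be absorbed by re-choosing the lifts, i.e. that the relevant obstruction groups vanish. Concretely: in the submodule $\ebc$, the group in degree $(2,2)$ is spanned by $y \cdot (\text{something in degree }(1,1)) = y\beta_{\mathrm{sub}}$, but $y\beta_{\mathrm{sub}} = 0$ in $\ebc$; and one checks degree $(2,2)$ of $\ebc$ more carefully against Figure \ref{EBcohomology}—it is $x\alpha$, which is not a $y$-multiple, so $y\tilde\beta$, being a $y$-multiple of a degree-$(1,1)$ element of $X$, when pushed into the submodule must land in $y\cdot\ebc_{(1,1)} = 0$; hence $y\tilde\beta = 0$ automatically for degree/relation reasons. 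Then one analyzes $z\tilde\beta - y\tilde\alpha$ in degree $(3,2)$: replacing $\tilde\alpha$ by $\tilde\alpha + c$ for $c \in \ebc_{(2,1)}$ changes this by $-yc$, and one needs $y\colon \ebc_{(2,1)} \to \ebc_{(3,2)}$ to hit the obstruction. Since $\ebc_{(2,1)} = \Z/3\cdot\alpha$ and $y\alpha \neq 0$ generates $\ebc_{(3,2)} = \Z/3$, this map is surjective, so the obstruction is killed. Once both lifts are adjusted, $\tilde\alpha$ and $\tilde\beta$ generate a submodule isomorphic to $\ebc$ mapping isomorphically to the quotient, giving the splitting.

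I expect the main obstacle to be verifying carefully, against the full $\M_3$-module structure (not just the two listed relations), that there are no \emph{other} places where an extension class can hide—in particular, one must check the interaction with the bottom-cone part of $\ebc$ (the elements in degrees $(1,q)$ and $(2,q)$ for $q<0$, together with their divisibility by $x$), and confirm that the lifts of $\beta$ and $\alpha$ already chosen force the entire bottom cone to behave correctly, with no further $x$-divisibility obstruction. This amounts to checking that $\operatorname{Ext}^1$ of the bottom-cone piece against $\ebc$ also vanishes in the relevant internal degree, which should follow from the same surjectivity-of-multiplication arguments. If the structural argument proves delicate, the fallback is to write down the first two terms of a minimal free resolution $\M_3\langle\alpha,\beta\rangle \leftarrow \M_3\langle r_1, r_2, \dots\rangle \leftarrow \cdots$ from the generators-and-relations presentation, apply $\Hom_{\M_3}(-,\ebc)$, and compute the cohomology of the resulting complex in internal degree $(0,0)$; the vanishing should then reduce to the same surjectivity facts about multiplication by $y$ and $z$ on $\ebc$.
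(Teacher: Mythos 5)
Your approach differs from the paper's: you attempt a structural ``lift the generators and kill obstructions'' argument, whereas the paper writes down the first three terms of an explicit free resolution of $\ebc$, applies $\Hom_{\M_3}(-,\ebc)$, and computes $\ker(d_2^*)/\operatorname{im}(d_1^*)$ directly. Your second obstruction step (adjusting $\tilde\alpha$ by an element $c\in\ebc_{(2,1)}$ to absorb $z\tilde\beta - y\tilde\alpha$, using surjectivity of $y\colon\ebc_{(2,1)}\to\ebc_{(3,2)}$) is correct and matches the paper's identification of $\operatorname{im}(d_1^*)$.

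However, there is a genuine error in the reasoning for $y\tilde\beta = 0$. You argue that ``$y\tilde\beta$, being a $y$-multiple of a degree-$(1,1)$ element of $X$, when pushed into the submodule must land in $y\cdot\ebc_{(1,1)} = 0$.'' This does not follow: $\tilde\beta$ is a lift and generally does \emph{not} lie in the submodule $\iota(\ebc)\subset X$, so $y\tilde\beta$ lying in the submodule does not mean it factors as $y$ times an element \emph{of the submodule}. A priori $y\tilde\beta$ could be any element of $\iota(\ebc)_{(2,2)} = \Z/3\{x\alpha\}$. The correct argument is the one implicit in the paper's cocycle computation: since $y^2 = 0$ in $\M_3$, we have $y\cdot(y\tilde\beta) = 0$ in $X$, and multiplication $y\colon\ebc_{(2,2)}\to\ebc_{(3,3)}$ is \emph{injective} (because $y\cdot x\alpha = xy\alpha\neq 0$), forcing $y\tilde\beta=0$. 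Your conclusion is right but for the wrong reason; the distinction matters because the injectivity of $y$ on $\ebc_{(2,2)}$ is exactly the fact the paper uses to show $\ker(d_2^*)$ is concentrated on $b_1$.

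There is also an unresolved (but acknowledged) point. Your splitting argument only works if the two relations $y\beta=0$ and $z\beta = y\alpha$ generate \emph{all} relations on $\alpha,\beta$ as $\M_3$-module generators -- equivalently, that $\ker(\eta\colon F_0\to\ebc)$ is generated by $yb_0$ and $zb_0 - ya_0$ (including in the bottom cone). You flag this yourself and suggest the fallback of writing the resolution, which is exactly what the paper does. As it stands your argument relies on this presentation being complete without verification, so you would need to either carry out that check or switch to the paper's explicit resolution to make the proof airtight.
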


\begin{proof}
We begin by constructing the first few terms of a free resolution 
\[\dots \rightarrow F_2\xrightarrow{d_2} F_1\xrightarrow{d_1} F_0\xrightarrow{\eta} \ebc\]
of $\ebc$ over $\M_3$. Recall from Example \ref{eb} that $\ebc$ is generated by $\alpha$ in degree $(2,1)$ and $\beta$ in degree $(1,1)$ with $y\beta=0$ and $z\beta=y\alpha$. 

Define $F_0=\M_3\langle a_0\rangle\oplus\M_3\langle b_0\rangle$ where $a_0$ and $b_0$ are generators of each copy of $\M_3$ in degrees $(2,1)$ and $(1,1)$, respectively. There is a surjection $\eta\colon F_0\rightarrow\ebc$ given by $a_0\mapsto \alpha$, $b_0\mapsto \beta$. Its kernel is generated by $yb_0$ and $zb_0-ya_0$, so we can construct another map $d_1\colon \M_3\langle a_1\rangle \oplus\M_3\langle b_1\rangle\rightarrow F_0$ (where $|a_1|=(2,2)$ and $|b_1|=(3,2)$) such that $d_1(a_1)=yb_0$ and $d_1(b_1)=zb_0-ya_0$. Let $F_1$ denote the module $\M_3\langle a_1\rangle\oplus \M_3\langle b_1\rangle$. 

Notice that $\ker(d_1)$ is generated by $ya_1$ and $za_1-yb_1$. For $F_2:= \M_3\langle a_2\rangle\oplus \M_3\langle b_2\rangle$ (with $|a_2|=(3,3)$ and $|b_2|=(4,3)$), we define the map $d_2\colon F_2\rightarrow F_1$ given by $d_2(a_2)=ya_1$ and $d_2(b_2)=za_1-yb_1$. We can stop here as this is the only part of the free resolution necessary to understand the first Ext group.

Next apply the functor $\Hom_{\M_3}(-,\ebc)$ of degree preserving maps to our free resolution:
\[\Hom(F_0,\ebc)\xrightarrow{d_1^*} \Hom(F_1,\ebc)\xrightarrow{d_2^*}\Hom(F_2,\ebc)\rightarrow\cdots\]
and compute $\ker (d_2^*)/\operatorname{im}(d_1^*)$. 

Let's start by computing $d_2^*$. Let $f$ be an element of $\Hom(F_1,\ebc)=\Hom(\M_3\langle a_1\rangle\oplus\M_3\langle b_1\rangle,\ebc)$. Since $f$ is determined by its values on $a_1$ and $b_1$, let us say $f(a_1)=r$ and $f(b_1)=s$ for some $r,s\in \ebc$ in degrees $(2,2)$ and $(3,2)$, respectively. Then $d_2^*(f)\in\Hom(F_2,\ebc)=\Hom(\M_3\langle a_2\rangle\oplus\M_3\langle b_2\rangle,\ebc)$ is determined by its values on $a_2$ and $b_2$. We have
\begin{align*}
d_2^*(f)(a_2)&=f(d_2(a_2))=f(ya_1)=yf(a_1)=yr, \\
d_2^*(f)(b_2)&=f(d_2(b_2))=f(za_1-yb_1)=zf(a_1)-yf(b_1)=zr-ys.
\end{align*}
So $f\in \ker(d_2^*)$ exactly when $yr=0$ and $zr-ys=0$ in $\ebc$. 

Recall that $r$ must be some element of $\tilde{H}^{2,2}(EB)$, so $yr\neq 0$ unless $r=0$. So $f\in\ker(d_2^*)$ if and only if $f(a_1)=0$. Next observe that $ys=0$ for any $s\in\tilde{H}^{3,2}(EB)$. In particular, there are two nonzero elements of $\ker (d_2^*)$; namely, the maps such that $a_1\mapsto 0$ and $b_1\mapsto \pm y\alpha$. Call these maps $f_+$ and $f_-$. We will see that both of these maps are in $\operatorname{im}(d_1^*)$, proving that $\operatorname{Ext}^{1,(0,0)}_{\M_3}\left(\ebc,\ebc\right)=0$. 

To show this, we compute $d_1^*$. Given a map $g\in \Hom(F_0,\ebc)=\Hom(\M_3\langle a_0\rangle\oplus\M_3\langle b_0\rangle,\ebc)$, we know $g$ is determined by its values on $a_0$ and $b_0$, so let's suppose $g(a_0)=t$ and $g(b_0)=u$ for some $t\in\tilde{H}^{2,1}(EB)$ and $u\in\tilde{H}^{1,1}(EB)$. Then $d_1^*(g)\in \Hom(F_1,\ebc)=\Hom(\M_3\langle a_1\rangle\oplus\M_3\langle b_1\rangle,\ebc)$ and can be determined by its values on $a_1$ and $b_1$. In particular,
\begin{align*}
d_1^*(g)(a_1)&=g(d_1(a_1))=g(yb_0)=yg(b_0)=yu \\
d_1^*(g)(b_1)&=g(d_1(b_1))=g(zb_0-ya_0)=zg(b_0)-yg(a_0) = zu-yt.
\end{align*}
Then we can see that $u=-\beta$, $t=\alpha$ defines an element of $\Hom (F_1,\ebc)$ whose image under $d_1^*$ is equal to $f_+$. Similarly, $u=\beta$, $t=-\alpha$ defines an element of $\Hom (F_1,\ebc)$ whose image under $d_1^*$ is $f_-$.
\end{proof}

\begin{lemma}\label{otherextensions}
The group $\operatorname{Ext}_{\M_3}^{1,(2,1)}(\ebc,\M_3)\cong \operatorname{Ext}^{1,(0,0)}_{\M_3}(\ebc,\Sigma^{2,1}\M_3)$ is trivial.
\end{lemma}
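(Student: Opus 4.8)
The plan is to reuse, essentially verbatim, the free resolution $\cdots\to F_2\xrightarrow{d_2}F_1\xrightarrow{d_1}F_0\xrightarrow{\eta}\ebc$ of $\ebc$ over $\M_3$ built in the proof of Lemma \ref{EBextensions}; since $\operatorname{Ext}^1$ only sees its first three terms, no further work on the resolution is needed. Recall $F_0=\M_3\langle a_0\rangle\oplus\M_3\langle b_0\rangle$ with $|a_0|=(2,1)$, $|b_0|=(1,1)$; $F_1=\M_3\langle a_1\rangle\oplus\M_3\langle b_1\rangle$ with $|a_1|=(2,2)$, $|b_1|=(3,2)$; $F_2=\M_3\langle a_2\rangle\oplus\M_3\langle b_2\rangle$ with $|a_2|=(3,3)$, $|b_2|=(4,3)$; and $d_1(a_1)=yb_0$, $d_1(b_1)=zb_0-ya_0$, $d_2(a_2)=ya_1$, $d_2(b_2)=za_1-yb_1$. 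The stated isomorphism $\operatorname{Ext}^{1,(2,1)}_{\M_3}(\ebc,\M_3)\cong\operatorname{Ext}^{1,(0,0)}_{\M_3}(\ebc,\Sigma^{2,1}\M_3)$ is immediate from identifying internally-graded homomorphisms into $\M_3$ with degree-preserving homomorphisms into an appropriate shift, so I would compute the right-hand group by applying $\Hom_{\M_3}(-,\Sigma^{2,1}\M_3)$ (degree-preserving maps) to the resolution and computing $\ker(d_2^*)/\im(d_1^*)$.

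First I would compute $\ker(d_2^*)$. A degree-preserving map $f\colon F_1\to\Sigma^{2,1}\M_3$ is determined by $f(a_1)\in(\Sigma^{2,1}\M_3)^{2,2}=\M_3^{0,1}$ and $f(b_1)\in(\Sigma^{2,1}\M_3)^{3,2}=\M_3^{1,1}$, and from the description of $\M_3$ these groups are $\Z/3\langle x\rangle$ and $\Z/3\langle y\rangle$ respectively. Dualizing $d_2$ gives $d_2^*(f)(a_2)=yf(a_1)$ and $d_2^*(f)(b_2)=zf(a_1)-yf(b_1)$. Since $xy\neq 0$ in $\M_3$, the first expression vanishes exactly when $f(a_1)=0$; and once $f(a_1)=0$ the second reduces to $-yf(b_1)$, which is automatically zero because $y^2=0$ in $\M_3$. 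Hence $\ker(d_2^*)$ is precisely the set of maps killing $a_1$, a copy of $\Z/3$ detected by the value on $b_1$.

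Next I would compute $\im(d_1^*)$. A degree-preserving map $g\colon F_0\to\Sigma^{2,1}\M_3$ is determined by $g(a_0)\in(\Sigma^{2,1}\M_3)^{2,1}=\M_3^{0,0}=\Z/3$ and $g(b_0)\in(\Sigma^{2,1}\M_3)^{1,1}=\M_3^{-1,0}$; the latter group is $0$, so $g$ is determined by $g(a_0)$ alone. Dualizing $d_1$ yields $d_1^*(g)(a_1)=yg(b_0)=0$ and $d_1^*(g)(b_1)=zg(b_0)-yg(a_0)=-yg(a_0)$, and as $g(a_0)$ runs over $\M_3^{0,0}$ this runs over all of $\M_3^{1,1}\cong\Z/3$. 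Therefore $\im(d_1^*)$ is exactly the subgroup of maps killing $a_1$ with arbitrary value on $b_1$, which is all of $\ker(d_2^*)$. Consequently $\operatorname{Ext}^{1,(0,0)}_{\M_3}(\ebc,\Sigma^{2,1}\M_3)=\ker(d_2^*)/\im(d_1^*)=0$, and the lemma follows.

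I do not expect a genuine obstacle here: with the resolution of Lemma \ref{EBextensions} available, this is pure bookkeeping. The only point requiring care is correctly identifying the shifted graded pieces of $\Sigma^{2,1}\M_3$ hit by the generators of $F_0,F_1,F_2$ — in particular noticing that $(\Sigma^{2,1}\M_3)^{1,1}=\M_3^{-1,0}=0$, which is what forces $g(b_0)=0$ and makes the image exhaust the kernel — together with the two multiplicative facts $xy\neq 0$ and $y^2=0$ in $\M_3$ that control the kernel computation.
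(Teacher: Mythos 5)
Your proof is correct and takes essentially the same route as the paper: apply $\Hom_{\M_3}(-,\Sigma^{2,1}\M_3)$ to the free resolution constructed in Lemma \ref{EBextensions} and check $\ker(d_2^*)=\im(d_1^*)$ directly. The only cosmetic difference is that you explicitly identify the relevant graded pieces of $\Sigma^{2,1}\M_3$ — in particular that $(\Sigma^{2,1}\M_3)^{1,1}=\M_3^{-1,0}=0$ forces $g(b_0)=0$, and that $y^2=0$ kills $yf(b_1)$ — whereas the paper reaches the same conclusion by constructing the explicit preimages $g_\pm$, but the computations are the same.
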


\begin{proof}
We begin by considering the same free resolution for $\ebc$ over $\M_3$ as in the proof of Lemma \ref{EBextensions}:
\[\cdots \rightarrow F_2\xrightarrow{d_2} F_1\xrightarrow{d_1} F_0\xrightarrow{\eta} \ebc.\]
To compute $\operatorname{Ext}^{1,(0,0)}_{\M_3}(\ebc,\Sigma^{2,1}\M_3)$, we next apply the functor $\operatorname{Hom}(-,\Sigma^{2,1}\M_3)$ of degree preserving maps to this free resolution. We claim that $\ker(d_2^*)/\operatorname{im}(d_1^*)$ is trivial. 

Let $f\in\ker(d_2^*)$. So $f$ is some map $f\colon \M_3\langle a_1\rangle\oplus\M_3\langle b_1\rangle\rightarrow \Sigma^{2,1}\M_3$. Suppose $f(a_1)=s$ and $f(b_1)=t$ for some $s,t\in\Sigma^{2,1}\M_3$. Recall from the previous lemma that $|a_1|=(2,2)$ and $|b_1|=(3,2)$. Since $f$ is degree preserving, we have that $|s|=(2,2)$ and $|t|=(3,2)$.  

Now, $d_2^*(f)$ is a map $d_2^*(f)\colon \M_3\langle a_2\rangle\oplus \M_3\langle b_2\rangle\rightarrow \Sigma^{2,1}\M_3$ given by $d_2^*(f)(a_2)=ys$ and $d_2^*(f)(b_2)=zs-yt$. Since $f\in\ker(d_2^*)$, we know $ys=0$ and $zs-yt=0$. We can see from Figure \ref{shiftedM3} that $ys=0$ only when $s=0$. Since $s=0$, the second relation simplifies to the requirement that $-yt=0$. This is true for any element of $\Sigma^{2,1}\M_3$ in degree $(3,2)$. This tells us that any function $f$ in $\ker d_2^*$ must be of the form $a_1\mapsto 0$, $b_1\mapsto t$ for any $t$ in degree $(3,2)$. 

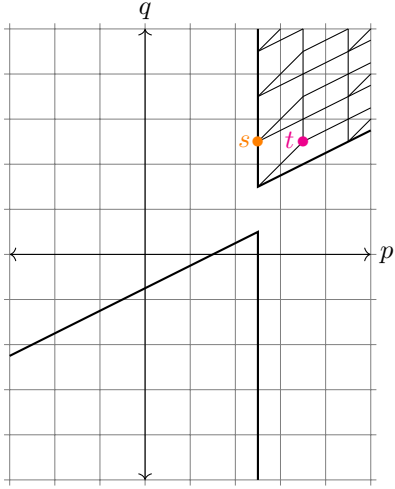
\begin{figure}[htb!]
\begin{center}
	\begin{tikzpicture}[scale=0.6]
		\draw[help lines] (-3.125,-5.125) grid (5.125, 5.125);
		\draw[<->] (-3,0)--(5,0)node[right]{$p$};
		\draw[<->] (0,-5)--(0,5)node[above]{$q$};;
		\scone{2}{1}{black};
		\draw (2.5,1.5) -- (3.5,2.5);
		\draw (2.5,2.5) -- (3.5,3.5);
		\draw (2.5,3.5) -- (3.5,4.5);
		\draw (2.5,4.5) -- (3,5);
		\draw (3.5,2.5) -- (3.5,5);
		\draw (4.5,2.5) -- (4.5,5);
		\draw (4.5,2.5) -- (5,3);
		\draw (4.5,3.5) -- (5,4);
		\draw (4.5,4.5) -- (5,5);
		\draw (2.5,2.5) -- (5,3.75);
		\draw (2.5,3.5) -- (5,4.75);
		\draw (2.5,4.5) -- (3.5,5);
		\draw (3.5,2.5) -- (5,3.25);
		\draw (3.5,3.5) -- (5,4.25);
		\draw (3.5,4.5) -- (4.5,5);
		\draw[orange] (2.5,2.5) node{$\bullet$};
		\draw[magenta] (3.5,2.5) node{$\bullet$};
		\draw[orange] (2.2,2.5) node{$s$};
		\draw[magenta] (3.2,2.55) node{$t$};
	\end{tikzpicture}
	\end{center}
	\caption{\label{shiftedM3} The top cone module structure of $\Sigma^{2,1}\M_3$. Note $ys\neq 0$ when $s\neq 0$.}
\end{figure}

It turns out that any map of this form is also in $\operatorname{im}(d_1^*)$. Let $a$ denote the generator of $\Sigma^{2,1}\M_3$. We want to show the maps $a_1\mapsto 0$ and $b_1\mapsto \pm ya$ are in the image of $d_1^*$. Define the map $g_+\colon F_0\rightarrow\Sigma^{2,1}\M_3$ given by $a_0\mapsto a$ and $b_0\mapsto 0$. Then $d_1^*(g_+)\colon F_1\rightarrow\Sigma^{2,1}\M_3$ is given by:
\begin{align*}
d_1^*(g_+)(a_1)&=g_+(d_1(a_1))=g_+(yb_0)=yg_+(b_0)=0, \\
d_1^*(g_+)(b_1)&=g_+(d_1(b_1))=g_+(zb_0-ya_0)=-ya.
\end{align*}
A similar computation shows the image of the map $g_-\colon F_0\rightarrow\Sigma^{2,1}\M_3$ given by $a_0\mapsto -a$ and $b_0\mapsto 0$ under $d_1^*$ sends $a_1$ to $0$ and $b_1$ to $ya$.

So $\ker (d_2^*)=\operatorname{im}(d_1^*)$ and $\operatorname{Ext}^{1,(2,1)}(\ebc,\M_3)$ is trivial.
\end{proof}

Together, Lemmas \ref{EBextensions} and \ref{otherextensions} tell us that given a short exact sequence of the form
\[0\rightarrow\Sigma^{2,1}\M_3\oplus \ebc^{\oplus k}\rightarrow X \rightarrow \ebc^{\oplus \ell}\rightarrow 0,\]
the $\M_3$-module $X$ must be isomorphic to $\Sigma^{2,1}\M_3\oplus \ebc^{\oplus k+\ell}$. We can even take things one step further to conclude any extension 
\[0\rightarrow\Sigma^{2,1}\M_3\oplus \ebc^{\oplus k}\rightarrow X \rightarrow \M_3\oplus\ebc^{\oplus \ell}\rightarrow 0\]
must be trivial by the projectivity of $\M_3$ as an $\M_3$-module.

\begin{lemma}\label{towerextensions}
There are no nontrivial extensions
\[0\rightarrow \Sigma^{2,1}\M_3\oplus\left(\Sigma^{1,0}H^{*,*}(C_3)\right)^{\oplus2g}\rightarrow X\rightarrow \M_3\oplus\ebc\rightarrow 0.\]
\end{lemma}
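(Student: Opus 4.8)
The plan is to translate the statement into the vanishing of a single $\operatorname{Ext}^1$ group and then exploit additivity to reduce everything to pieces already handled earlier. Concretely, a short exact sequence of graded $\M_3$-modules of the stated shape is classified up to equivalence by an element of $\operatorname{Ext}^{1,(0,0)}_{\M_3}\bigl(\M_3\oplus\ebc,\ \Sigma^{2,1}\M_3\oplus(\Sigma^{1,0}H^{*,*}(C_3))^{\oplus 2g}\bigr)$, so it suffices to prove this group is trivial. Since $\operatorname{Ext}^1$ is additive in each variable (all sums here are finite), this group is a direct sum of copies of three kinds of term: $\operatorname{Ext}^1_{\M_3}(\M_3,-)$, which vanishes because $\M_3$ is free, hence projective, over itself; $\operatorname{Ext}^{1,(0,0)}_{\M_3}(\ebc,\Sigma^{2,1}\M_3)$, which vanishes by Lemma \ref{otherextensions}; and $\operatorname{Ext}^{1,(0,0)}_{\M_3}(\ebc,\Sigma^{1,0}H^{*,*}(C_3))$. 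Thus the entire argument reduces to showing this last group is zero.

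For that, I would reuse the free resolution $\cdots\to F_2\xrightarrow{d_2}F_1\xrightarrow{d_1}F_0\xrightarrow{\eta}\ebc$ of $\ebc$ constructed in the proof of Lemma \ref{EBextensions}, in which $F_1=\M_3\langle a_1\rangle\oplus\M_3\langle b_1\rangle$ with $|a_1|=(2,2)$ and $|b_1|=(3,2)$, apply $\operatorname{Hom}_{\M_3}(-,\Sigma^{1,0}H^{*,*}(C_3))$, and compute the cohomology at $\operatorname{Hom}(F_1,-)$. The key observation is that $H^{*,*}(C_3)\cong\Z/3[x,x^{-1}]$ is concentrated in the single column $p=0$, so $\Sigma^{1,0}H^{*,*}(C_3)$ is concentrated in the column $p=1$. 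Because the generators $a_1$ and $b_1$ of $F_1$ sit in columns $p=2$ and $p=3$, every degree-preserving $\M_3$-module map $F_1\to\Sigma^{1,0}H^{*,*}(C_3)$ must kill both generators; that is, $\operatorname{Hom}_{\M_3}(F_1,\Sigma^{1,0}H^{*,*}(C_3))=0$. Since both $\ker(d_2^*)$ and $\operatorname{im}(d_1^*)$ are contained in this zero group, we get $\operatorname{Ext}^{1,(0,0)}_{\M_3}(\ebc,\Sigma^{1,0}H^{*,*}(C_3))=0$, completing the reduction. (The same column-support observation in fact shows $\operatorname{Ext}^{i}_{\M_3}(\ebc,\Sigma^{1,0}H^{*,*}(C_3))=0$ for all $i\ge 1$.) Having shown the classifying $\operatorname{Ext}^1$ group vanishes, every extension of the given form splits, so $X\cong\M_3\oplus\Sigma^{2,1}\M_3\oplus\ebc\oplus(\Sigma^{1,0}H^{*,*}(C_3))^{\oplus 2g}$.

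I do not expect a serious obstacle here: the genuinely new ingredient beyond Lemmas \ref{EBextensions} and \ref{otherextensions} is only the elementary remark that $H^{*,*}(C_3)$ lives in a single column. The one place to be careful is the degree bookkeeping in the splitting of the first paragraph — in particular confirming that the relevant internal degree is $(0,0)$ and that the summands of the target really are $\Sigma^{2,1}\M_3$ and shifts of $H^{*,*}(C_3)$ as claimed. If one wished to avoid citing Lemma \ref{otherextensions}, the term $\operatorname{Ext}^{1,(0,0)}_{\M_3}(\ebc,\Sigma^{2,1}\M_3)$ could instead be recomputed directly from the same free resolution, repeating the $\ker(d_2^*)/\operatorname{im}(d_1^*)$ calculation of that lemma's proof verbatim.
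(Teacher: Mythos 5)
Your proposal is correct and follows essentially the same route as the paper: reduce by additivity of $\operatorname{Ext}^1$ and projectivity of $\M_3$ to the single remaining term $\operatorname{Ext}^{1,(0,0)}_{\M_3}(\ebc,\Sigma^{1,0}H^{*,*}(C_3))$, then observe that this vanishes because the generators of $F_1$ sit in columns $p=2,3$ while $\Sigma^{1,0}H^{*,*}(C_3)$ lives only in column $p=1$, forcing $\operatorname{Hom}(F_1,\Sigma^{1,0}H^{*,*}(C_3))=0$. Your write-up is if anything a bit more careful than the paper's (which cites Lemma~\ref{EBextensions} in passing even though no $\ebc$ appears in the kernel, so it is not actually needed), but the key ideas and the free resolution used are identical.
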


\begin{proof}
Using Lemma \ref{EBextensions} and Lemma \ref{otherextensions} as well as the fact that $\M_3$ is free, we only need to show that $\mathrm{Ext}^{1,(0,0)}\left(\ebc,\Sigma^{1,0}H^{*,*}(C_3)\right)=0$. Using the free resolution
\[\cdots \rightarrow F_2\rightarrow F_1\rightarrow F_0\rightarrow \ebc\]
defined in the proof of Lemma \ref{EBextensions}, we can see that $\operatorname{Hom}\left(F_1,\Sigma^{1,0}H^{*,*}(C_3)\right)$ must be 0. Recall $F_1$ is isomorphic to two copies of $\M_3$ generated in degrees $(3,2)$ and $(2,2)$. Since $\Sigma^{1,0}H^{*,*}(C_3)$ is concentrated in degrees $(1,q)$, there are no degree preserving maps $F_1\rightarrow \Sigma^{1,0}H^{*,*}(C_3)$. Thus $\operatorname{Ext}^{1,(0,0)}\left(\ebc,\Sigma^{1,0}H^{*,*}(C_3)\right)$ must be zero.
\end{proof}

With these lemmas, we are now ready to prove Theorem \ref{classificationversion}. For each of the four cases listed in the theorem, we will break up the computations into subcases based on the corresponding equivariant surgery decomposition of the isomorphism class. Each case is split up slightly differently, but most computations will consist of a base case (where we look at the cohomology of a surface with no ribbon surgeries or equivariant connected sum) and a separate inductive step.

%%%%%%%%%M_{2k+3g}[2k+2] cohomology computation%%%%%%%%%%%%
%\begin{class}[{$\Sph_{2k+3g}[2k+2]$}]\label{M_(2k+3g)[2k+2] cohomology}
We begin with Case (1) as defined in Theorem \ref{classificationversion}. Recall that the space $\Sph_{2k+3g}[2k+2]$ is orientable with $\beta(\Sph_{2k+3g}[2k+2])=2(2k+3g)$ and $F=2k+2$. In particular, $F-2=2k$ and $\frac{\beta-2F+4}{3}=2g$. We will show that 
\[H^{*,*}(\Sph_{2k+3g}[2k+2])\cong \M_3 \oplus \Sigma^{2,1} \M_3\oplus \left(\Sigma^{1,0}H^{*,*}(C_3)\right)^{\oplus 2g}\oplus \ebc^{\oplus 2k}\]
by induction on $k$. %The first case to consider is $k=0$. 

\begin{proof}[Proof of Theorem \ref{classificationversion}, Case (1) base, {$\Sph_{3g}[2]$}]
Recall that $\Sph_{3g}[2]:= S^{2,1}\#_3 M_g$. 

\begin{figure}
\begin{center}
\includegraphics[scale=.4]{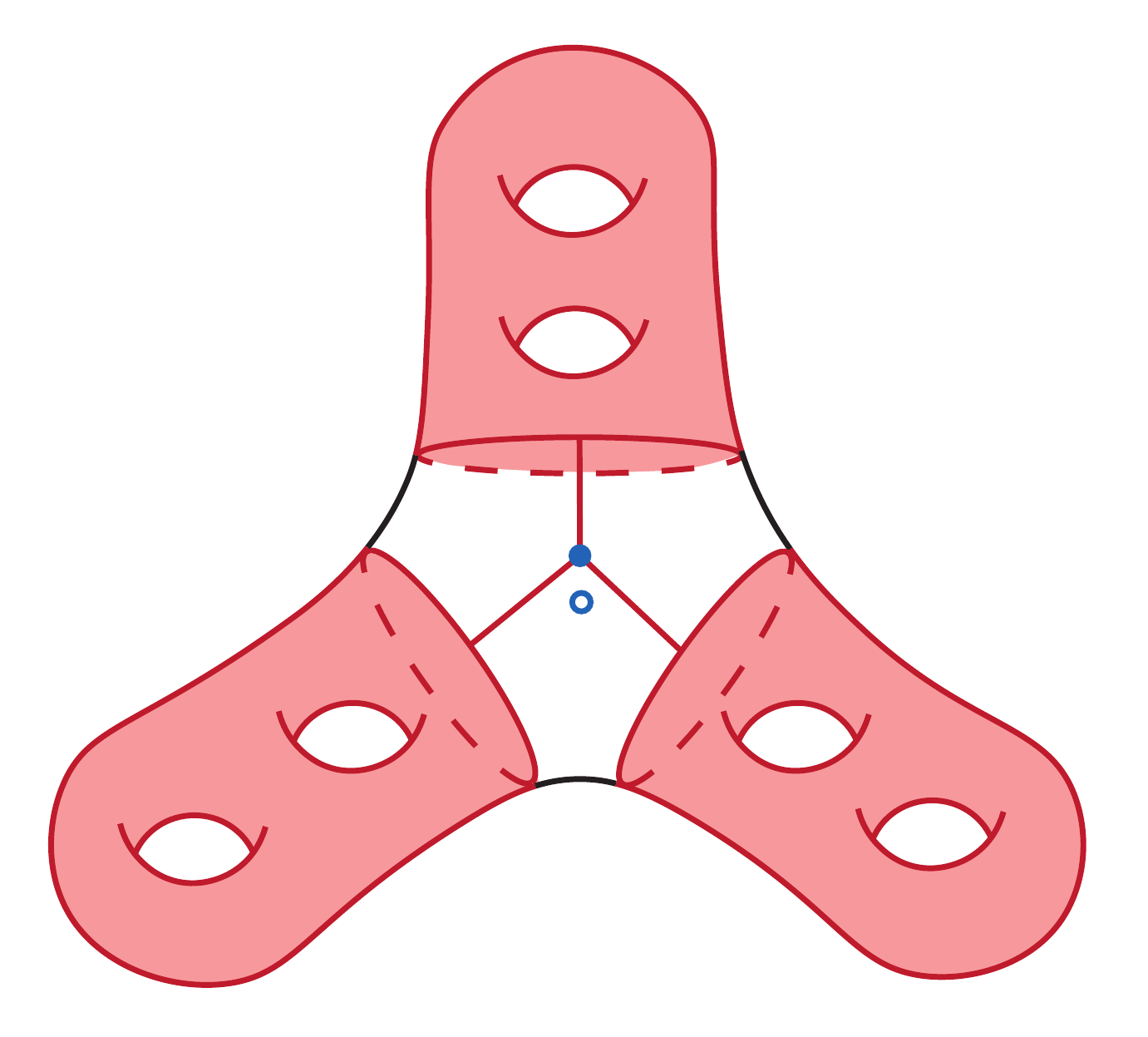}
\end{center}
\caption{\label{newspace} The space $Y\subset \Sph_{3g}[2]$ in red with $g=2$. Note $Y\simeq\displaystyle{\bigvee_{2g}S^{1,0}\wedge {C_3}_+}$.}
\end{figure}

To begin the computation, we will construct a cofiber sequence
\[Y_+\hookrightarrow \Sph_{3g}[2]_+ \rightarrow S^{2,1}\]
where $Y$ is the space in red depicted in Figure \ref{newspace}. The space $Y$ is homotopy equivalent to $\tilde{M}_g\wedge {C_3}_+$ which deformation retracts onto $\left(\bigvee_{2g} S^{1,0}\right)\wedge {C_3}_+$. This gives us a long exact sequence on cohomology:
\[\cdots \rightarrow\tilde{H}^{p,q}(S^{2,1})\rightarrow H^{p,q}(\Sph_{3g}[2])\rightarrow H^{p,q}(Y)\xrightarrow{d}\tilde{H}^{p+1,q}(S^{2,1})\rightarrow\cdots\]
which can be understood by analyzing its total differential
\[\bigoplus_{p,q}d^{p,q}\colon H^{p,q}(Y)\rightarrow \tilde{H}^{p+1,q}(S^{2,1}).\]
We plot the domain and target space of the differential below:

\begin{figure}[htb!]
\begin{center}
	\begin{tikzpicture}[scale=0.6]
		\draw[help lines] (-3.125,-5.125) grid (5.125, 5.125);
		\draw[<->] (-3,0)--(5,0)node[right]{$p$};
		\draw[<->] (0,-5)--(0,5)node[above]{$q$};;
		\cthree{1}{0}{red};
		\scone{0}{0}{red};
		\lab{1}{$2g$}{red};
		\draw (2.25,.95) node{$d^{1,0}$};
		\scone{2}{1.125}{blue};
		\draw[very thick,->] (1.5,.625) -- (2.5,.625);
	\end{tikzpicture}
	\end{center}
\end{figure}

Since the total differential is an $\M_3$-module map, it is completely determined by its values in degrees $(0,0)$ and $(1,0)$ by linearity. It is immediate that $d^{0,0}=0$ since $\tilde{H}^{1,0}(S^{2,1})=0$, and we can use the quotient lemma to determine $d^{1,0}$. In particular, $\Sph_{3g}[2]/C_3\simeq M_g$, and so $H^{2,0}(\Sph_{3g}[2])\cong\Z/3$. Therefore something in degree $(2,0)$ must be in the cokernel of $d$. This can only happen if $d^{1,0}=0$. 

We are able to determine by linearity that the total differential $\bigoplus_{a,b}d^{a,b}$ must be zero everywhere. This leaves us to solve the extension problem
\[\Sigma^{2,1}\M_3 \hookrightarrow H^{*,*}(\Sph_{3g}[2]) \twoheadrightarrow \M_3\oplus \left(\Sigma^{1,0}H^{*,*}(C_3)\right)^{\oplus 2g}.\]
By the observations in Remark \ref{M3summand}, we know $\M_3$ splits off as a summand of $H^{*,*}(\Sph_{3g}[2])$. Moreover, the submodule $\left(\Sigma^{1,0}(\Z/3[x,x^{-1}])\right)^{\oplus 2g}\subseteq \text{ker}(d)$ also splits off. To see this, let $a$ be a nonzero element of $H^{1,0}(\Sph_{3g}[2])$. We know $H^{3,1}(\Sph_{3g}[2])=0$, so $z\cdot a=0$. Since $y^2=0$ and for all nonzero $b$ in degree $(2,1)$, $y\cdot b\neq 0$, it must be the case that $y\cdot a=0$. Finally, any lower cone element must act trivially on $a$ since it is infinitely divisible by $x$. By linearity, we conclude that there cannot be any nonzero $y$, $z$, or lower cone extensions coming from $x^{\ell}a$ for any $\ell\in\Z$. 

Thus we can conclude the extension is trivial, and
\[H^{*,*}(\Sph_{3g}[2])\cong \M_3\oplus \Sigma^{2,1}\M_3\oplus\left(\Sigma^{1,0}H^{*,*}(C_3)\right)^{\oplus 2g}.\]
\end{proof}

We next proceed to the inductive step, assuming that 
\[H^{*,*}(\Sph_{2k+3g}[2k+2])\cong \M_3\oplus \Sigma^{2,1}\M_3\oplus \left(\Sigma^{1,0}H^{*,*}(C_3)\right)^{\oplus 2g}\oplus\ebc^{\oplus 2k}\]
for some $k\geq 0$. Let's use this assumption to compute the cohomology of $\Sph_{2(k+1)+3g}[2(k+1)+2]$. 

\begin{proof}[Proof of Theorem \ref{classificationversion}, Case (1) inductive step on $k$, {$\Sph_{2k+3g}[2k+2]$}]
We proceed by considering the cofiber of a map
\begin{equation}\label{seq1}
EB_+\rightarrow \Sph_{2(k+1)+3g}[2(k+1)+2]_+
\end{equation}
which we define below. The cofiber will be homotopy equivalent to $\Sph_{2k+3g}[2k+2]\vee EB$. To see this, first notice that $\Sph_{2k+3g}[2k+2]$ has at least $2$ fixed points for any $k\geq 0$. Construct $\Sph_{2(k+1)+3g}[2(k+1)+2]$ by performing $C_3$-ribbon surgery on $\Sph_{2k+3g}[2k+2]$ in a neighborhood of one of these fixed points. Then construct the map $EB\rightarrow\Sph_{2(k+1)+3g}[2(k+1)+2]$ by sending $EB$ into this copy of $R_3$ used to construct $\Sph_{2(k+1)+3g}[2(k+1)+2]$ from $\Sph_{2k+3g}[2k+2]$. Figure \ref{genericebcofiberseq} shows the cofiber of such a map.

Next notice that this cofiber is homotopy equivalent to the space shown in Figure \ref{cofiberstuff} which is homotopy equivalent to $\Sph_{2k+3g}[2k+2]\vee EB$.

\begin{figure}
\begin{center}
\includegraphics[scale=.35]{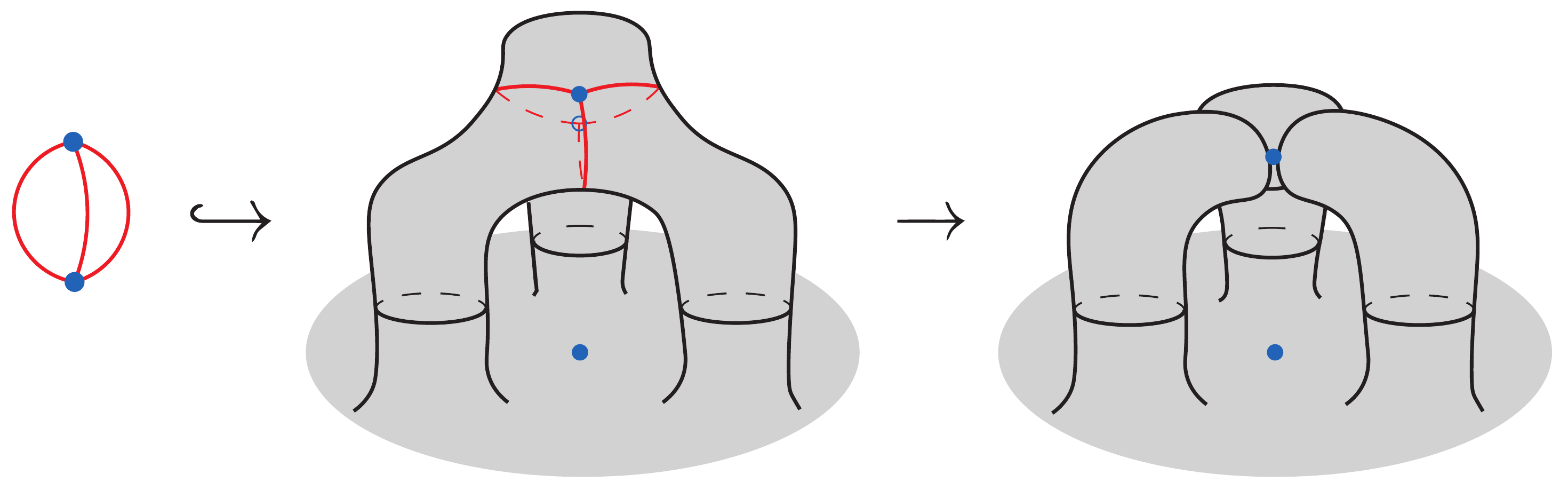}
\end{center}
\caption{\label{genericebcofiberseq} The cofiber sequence corresponding to (\ref{seq1}).}
\end{figure}

\begin{figure}
\begin{center}
\includegraphics[scale=.4]{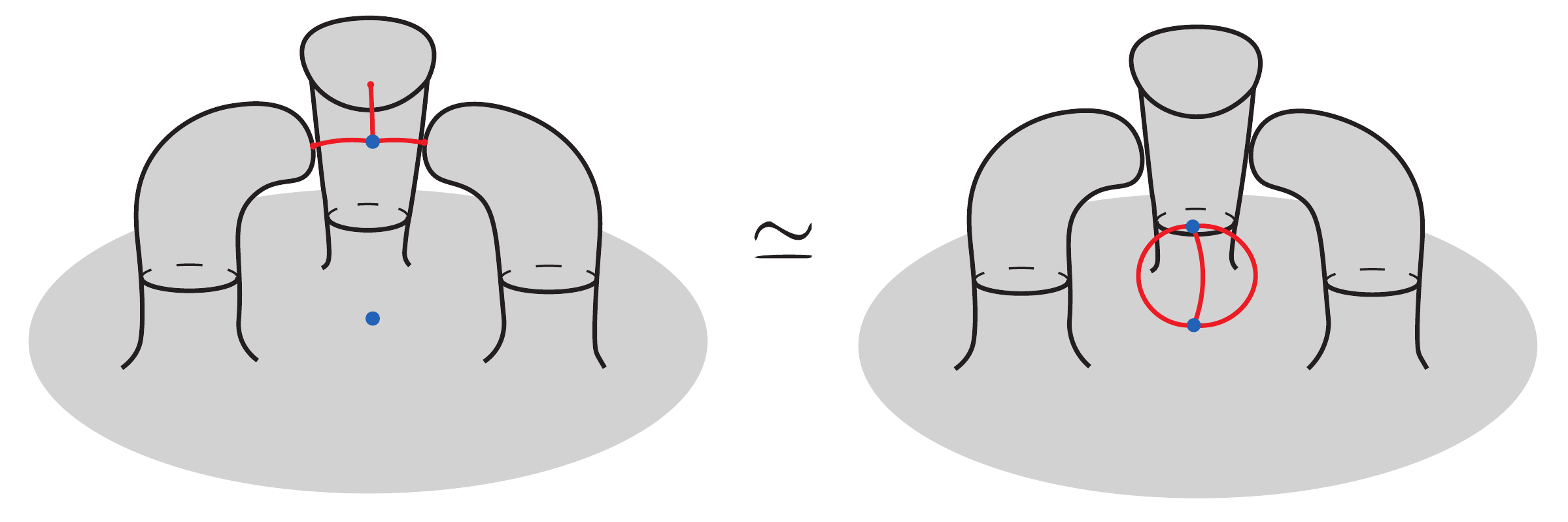}
\end{center}
\caption{\label{cofiberstuff} Up to homotopy, the cofiber of (\ref{seq1}) is $\Sph_{2(k+1)+3g}[2(k+1)+2]\vee EB$.}
\end{figure}

This cofiber sequence gives us a long exact sequence on cohomology given by 
\[\rightarrow H^{p,q}(\Sph_{2(k+1)+3g}[2(k+1)+2])\rightarrow H^{p,q}(EB)\xrightarrow{d}\tilde{H}^{p+1,q}(\Sph_{2k+3g}[2k+2]\vee EB)\rightarrow\]
As in previous examples, we can understand $\tilde{H}^{*,*}(\Sph_{2(k+1)+3g}[2(k+1)+2])$ by computing the total differential 
\[d\colon H^{*,*}(EB)\rightarrow \tilde{H}^{*+1,*}(\Sph_{2k+3g}[2k+2]\vee EB).\]
The domain and target space of this differential is shown on the $(p,q)$-axis in Figure \ref{M2k+3gdiff}. 

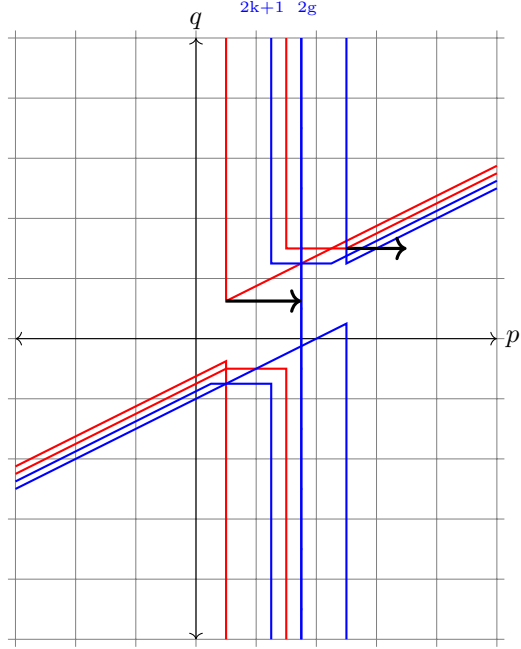
\begin{figure}[htb!]
\begin{center}
	\begin{tikzpicture}[scale=0.8]
		\draw[help lines] (-3.125,-5.125) grid (5.125, 5.125);
		\draw[<->] (-3,0)--(5,0)node[right]{$p$};
		\draw[<->] (0,-5)--(0,5)node[above]{$q$};;
		\eb{1}{1}{red};
		\scone{0}{.125}{red};
		\eb{.75}{.75}{blue};
		\cthree{1.25}{0}{blue};
		\scone{2}{.75}{blue};
		\lab{.6}{$2k+1$}{blue};
		\lab{1.35}{$2g$}{blue};
		\draw[very thick,->] (2.5,1.5) -- (3.5,1.5);
		\draw[very thick,->] (0.5,0.625) -- (1.75,0.625);
	\end{tikzpicture}
\end{center}
\caption{\label{M2k+3gdiff} The differential for the long exact sequence corresponding to (\ref{seq1}).}
\end{figure}

To compute this differential, it suffices to determine its value in degrees $(0,0)$, $(1,1)$, and $(2,2)$. 

First observe that $\Sph_{2(k+1)+3g}[2(k+1)+2]/C_3\simeq M_g$. This follows from the fact that for any $C_3$-space $X$ and non-equivariant space $Y$, $\left(X+k[R_3]\#_3Y\right)/C_3\cong \left(X/C_3\right)\#Y$. In this case, we have $\Sph_{2(k+1)+3g}[2(k+1)+2]\cong S^{2,1}+k[R_3]\#_3M_g$, so $\Sph_{2(k+1)+3g}[2(k+1)+2]/C_3\simeq M_g$. The quotient lemma then tells us that $H^{1,0}(\Sph_{2(k+1)+3g}[2(k+1)+2])\cong \left(\Z/3\right)^{\oplus 2g}$. In particular, $d^{0,0}$ must be 0.

We saw in Example \ref{eb} that $\ebc\cong\M_3\langle \alpha,\beta\rangle/(y\beta, y\alpha-z\beta)$ where $\alpha$ is in degree $(2,1)$ and $\beta$ is in degree $(1,1)$. There is nothing for $d_1^{2,1}$ to hit, so $d(\alpha)=0$. Moreover, $0=yd(\alpha)=d(y\alpha)=d(z\beta)$. If $d(\beta)\neq 0$, then linearity of $d$ would imply $d(z\beta)\neq 0$. This tells us the total differential must be 0.

This leaves us to solve the extension problem 
\[\left(\Sigma^{1,0}H^{*,*}(C_3)\right)^{\oplus 2g}\oplus\ebc^{\oplus 2k+1}\oplus \Sigma^{2,1}\M_3\hookrightarrow H^{*,*}(\Sph_{2(k+1)+3g}[2(k+1)+2])\twoheadrightarrow \M_3\oplus\ebc.\]
%Projection onto the first coordinate of the kernel gives a surjective map $H^{*,*}(\Sph_{2(k+1)+3g}[2(k+1)+2])\rightarrow \M_3$. This means $\M_3$ splits off as a summand since it is free. \\
We can then use Lemmas \ref{EBextensions}, \ref{otherextensions}, and \ref{towerextensions} to determine that there can be no non-trivial extensions. Thus finally we have that
\[H^{*,*}(\Sph_{2(k+1)+3g}[2(k+1)+2])\cong \M_3\oplus \Sigma^{2,1}\M_3\oplus \left(H^{*,*}(C_3)\right)^{\oplus 2g} \oplus \ebc^{\oplus 2(k+1)},\]
and the result holds by induction.
\end{proof}

This ends the computation for Case (1). Our next goal will be to compute the cohomology of the space $\Hex_{n,3n-2+2k+3g}[3n+2k]$ for all $n\geq 1$ and $k,g\geq 0$. 

First recall that $\Hex_{n,3n-2+2k+3g}[3n+2k]:=\left(\Hex_n+k[R_3]\right)\#_3M_g$ with $\beta$-genus $2(3n-2+2k+3g)$ and $F=3n+2k$. Therefore $F-2=3n+2k-2$ and $(\beta-2F+4)/3=2g$. So we will work towards proving the following:
\[H^{*,*}(\Hex_{n,3n-2+2k+3g}[3n+2k])\cong \M_3\oplus\Sigma^{2,1}\M_3\oplus\ebc^{\oplus(3n-2+2k)}\oplus\left(\Sigma^{1,0}H^{*,*}(C_3)\right)^{\oplus 2g}.\]

This will be done in several steps. First we consider the base case with $g=k=0$ and $n=1$. Then we confirm that the result holds for $n=1$, $g=0$, and $k\geq 0$. The next step will be to induct on $n$ and compute cohomology in the case $n\geq 1$, $g=0$, and $k\geq 0$. The final step will be to consider the $g\geq 0$ case.

\begin{proof}[Proof of Theorem \ref{classificationversion}, Case (2) base, $\Hex_1$]% {$\Hex_{n,3n-2+2k+3g}[3n+2k]$}]%\label{Hex1cohomology}

%As previously stated, we begin with the computation of $H^{*,*}(\Hex_1)$. 
There is a cofiber sequence
\begin{equation}\label{M1[3] seq 1}
EB \hookrightarrow \Hex_1\rightarrow S^{2,1}
\end{equation}
which we can see depicted in Figure \ref{M1_3_cofibseq}. This gives a long exact sequence on cohomology
\[\cdots \rightarrow \tilde{H}^{p,q}(S^{2,1}) \rightarrow \tilde{H}^{p,q}(\Hex_1)\rightarrow \tilde{H}^{p,q}(EB)\xrightarrow{d_1^{p,q}} \tilde{H}^{p+1,q}(S^{2,1})\rightarrow\cdots\]
which can be understood by computing its total differential 
\[\bigoplus_{p,q}d_1^{p,q}\colon \tilde{H}^{*+1,*}(EB)\rightarrow \tilde{H}^{*,*}(S^{2,1})\]
shown in Figure \ref{M1[3]diff1}.

\begin{figure}
\begin{center}
\includegraphics[scale=.4]{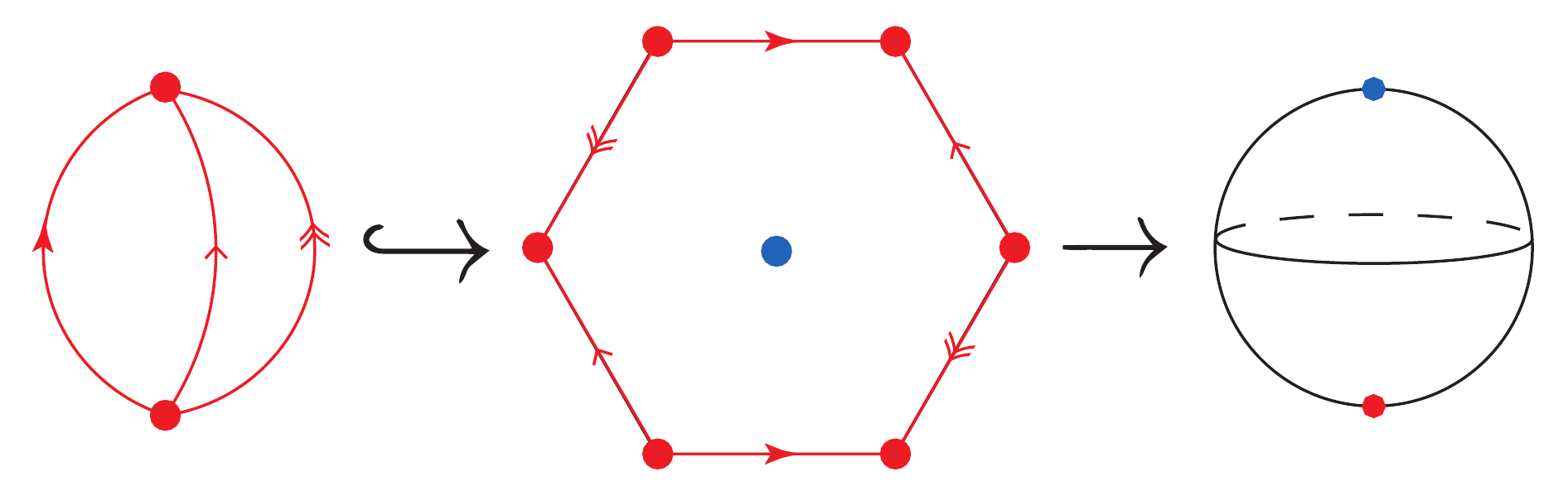}
\end{center}
\caption{\label{M1_3_cofibseq} The cofiber sequence $EB\hookrightarrow \Hex_1 \rightarrow S^{2,1}$.}
\end{figure}

\begin{figure}
\begin{center}
	\begin{tikzpicture}[scale=0.6]
		\draw[help lines] (-3.125,-5.125) grid (5.125, 5.125);
		\draw[<->] (-3,0)--(5,0)node[right]{$p$};
		\draw[<->] (0,-5)--(0,5)node[above]{$q$};;
		\eb{1}{1}{red};
		\scone{2}{1.25}{blue};
		\draw[very thick,->] (1.5,-.5) -- (2.5,-.5);
		\draw[very thick,->] (1.5,1.75) -- (2.5,1.75);
	\end{tikzpicture}
	\end{center}
	\caption{\label{M1[3]diff1} The differential for the long exact sequence corresponding to (\ref{M1[3] seq 1}).}
\end{figure}

Similar reasoning to that of the last example tells us that the total differential to this cofiber sequence must be zero. In particular, we can use the module structure $\ebc\cong\M_3\langle \alpha,\beta\rangle/(y\beta, y\alpha-z\beta)$ and the fact that $d^{2,1}(\alpha)=0$ to determine that $d^{p,q}$ must be zero for all $(p,q)$.

Since the differential is identically zero, we know $\ker(d)=\ebc$ and $\coker(d)=\Sigma^{2,1}\M_3$. We now have to solve the extension problem 
\[0\rightarrow \Sigma^{2,1}\M_3 \rightarrow \tilde{H}^{*,*}(\Hex_1)\rightarrow \ebc\rightarrow 0.\]
The above sequence is split as a consequence of Lemmas \ref{EBextensions} and \ref{otherextensions}, and we have
\[\tilde{H}^{*,*}(\Hex_1)\cong \Sigma^{2,1}\M_3 \oplus \ebc.\]
As per the observations in Remark \ref{M3summand}, it follows that
\[H^{*,*}(\Hex_1)\cong \M_3\oplus \Sigma^{2,1}\M_3\oplus \ebc.\]
\end{proof}

\begin{proof}[Proof of Theorem \ref{classificationversion}, Case (2) inductive step on $k$, {$\Hex_{1,1+2k}[2k+3]$}]
Next assume that for some $k\geq 0$ and $g=0$, the cohomology of $\Hex_{1,1+2k}[2k+3]$ is as stated in Theorem \ref{classificationversion}, and we will show that it holds true for $\Hex_{1,1+2(k+1)}[2(k+1)+3]$. 

Consider the cofiber sequence
\[EB\hookrightarrow \Hex_{1,1+2(k+1)}[2(k+1)+3]\rightarrow \Hex_{1,1+2k}[2k+3]\vee EB\]
whose corresponding long exact sequence on cohomology has differential 
\[d\colon \ebc \rightarrow\tilde{H}^{*+1,*}(\Hex_{1,1+2k}[3+2k]\vee EB).\]
To compute this differential, we consult Figure \ref{orientinducdiff2} and observe that $\alpha\in\ebc$ in degree $(2,1)$ must map to 0 as there is nothing in degree $(3,1)$. However using a similar argument to that in the base case, it must be that $d(\beta)=0$ by linearity. In particular, the total differential is zero.

Moreover, all extensions are trivial as a consequence of Lemmas \ref{EBextensions} and \ref{otherextensions}. From this, we can easily see what $\tilde{H}^{*,*}(\Hex_{1,1+2(k+1)}[2(k+1)+3])$ must be. Thus we have
\[H^{*,*}(\Hex_{1,1+2(k+1)}[2(k+1)+3])\cong \M_3\oplus\Sigma^{2,1}\M_3\oplus \ebc^{\oplus 2k+3}\]
as desired.
\end{proof}

\begin{figure}
\begin{center}
	\begin{tikzpicture}[scale=0.6]
		\draw[help lines] (-3.125,-5.125) grid (5.125, 5.125);
		\draw[<->] (-3,0)--(5,0)node[right]{$p$};
		\draw[<->] (0,-5)--(0,5)node[above]{$q$};;
		\eb{1.25}{1}{blue};
		\lab{1.25}{$2k+2$}{blue};
		\scone{2}{1.25}{blue};
		\eb{1}{1.125}{red};
	\end{tikzpicture}
	\end{center}
	\caption{\label{orientinducdiff2} The differential $d^{p,q}\colon \ebc^{p,q} \rightarrow \tilde{H}^{p+1,q}(\Hex_{1,1+2k}[2k+3]\vee EB)$.}
\end{figure}

\begin{proof}[Proof of Theorem \ref{classificationversion}, Case (2) inductive step on $n$, {$\Hex_{n,3n-2+2k}[3n+2k]$}]
Next, we compute the groups $H^{*,*}(\Hex_{n,3n-2+2k}[3n+2k])$, when $n\geq 1$. The $n=1$ case has been completed with the computation of $H^{*,*}(\Hex_{1,1+2k}[3+2k])$ in the previous step. 

Assume for some $n\geq 1$ that
\[H^{*,*}(\Hex_{n,3n-2+2k}[3n+2k])\cong \M_3\oplus \Sigma^{2,1}\M_3\oplus \ebc^{\oplus (3n-2+2k)}.\]

There is a cofiber sequence
\[Y_+ \hookrightarrow \Hex_{n+1,3(n+1)-2+2k}[3(n+1)+2k]_+ \rightarrow \Hex_{n,3n-2+2k}[3n+2k]\]
where $Y$ is the space depicted in Figure \ref{hexinduction}. This space is homotopy equivalent to $EB\vee EB\vee EB$. As usual, we want to consider the differential in the corresponding long exact sequence on cohomology:
\[d\colon H^{*,*}(Y) \rightarrow \tilde{H}^{*,*}(\Hex_{n,3n-2+2k}[3n+2k]).\] 
The spaces $H^{*,*}(Y)$ and $\tilde{H}^{*,*}(\Hex_{n,3n-2+2k}[3n+2k])$ are shown in Figure \ref{Hexndiff}.

\begin{figure}
\begin{center}
\includegraphics[scale=.5]{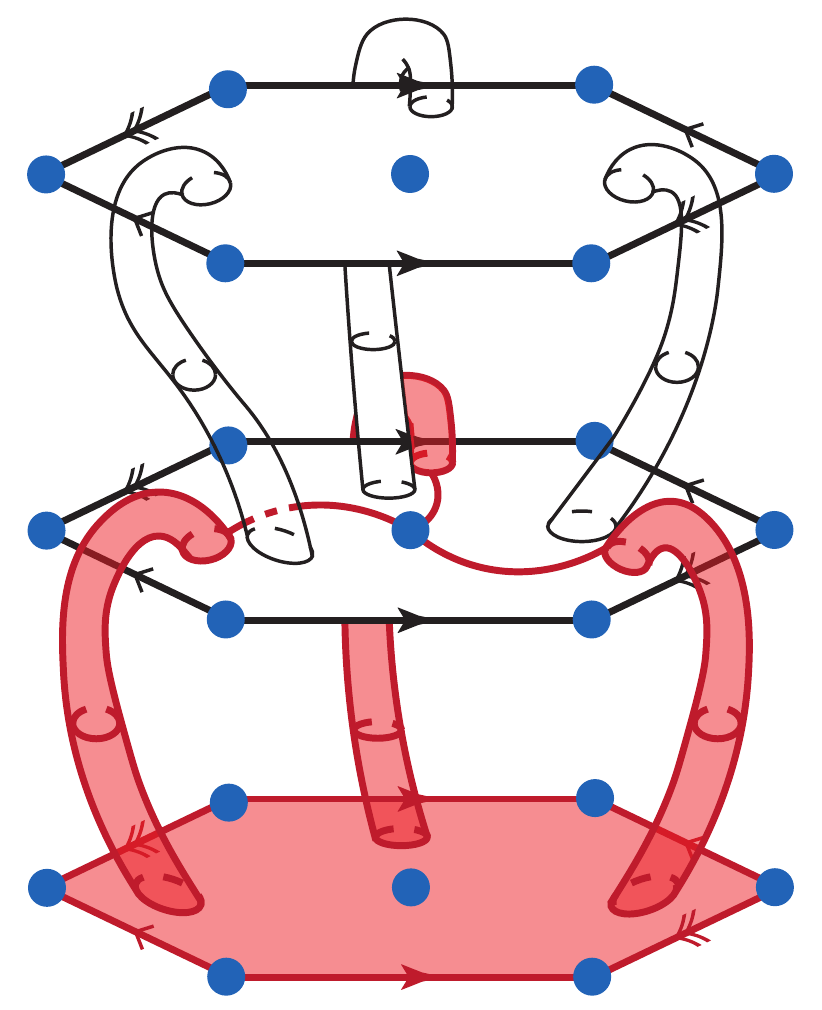}
\end{center}
\caption{\label{hexinduction} The space $Y\simeq EB\vee EB\vee EB$ is shown in red in the case $n=2$, $k=0$.}
\end{figure}

\begin{figure}
\begin{center}
	\begin{tikzpicture}[scale=0.6]
		\draw[help lines] (-3.125,-5.125) grid (5.125, 5.125);
		\draw[<->] (-3,0)--(5,0)node[right]{$p$};
		\draw[<->] (0,-5)--(0,5)node[above]{$q$};;
		\scone{0}{0}{red};
		\scone{2.125}{.865}{blue};
%		\lab{1}{$2g$}{blue};
%		\cthree{1}{0}{blue};
		\eb{.865}{1.125}{red};
		\eb{1.2}{1}{blue};
		\lab{.8}{$3$}{red};
		\draw[blue] (1+1/2,-5.5) node{\tiny{$3n-2+2k$}};
%		\lab{1.6}{$2k+1$}{blue};
%		\draw[very thick,->] (.5,-.75) -- (1.75,-.75);
	\end{tikzpicture}
	\end{center}
	\caption{\label{Hexndiff} The spaces $H^{*,*}(Y)$ and $\tilde{H}^{*,*}(\Hex_{n,3n-2+2k}[3n+2k])$.}
\end{figure}
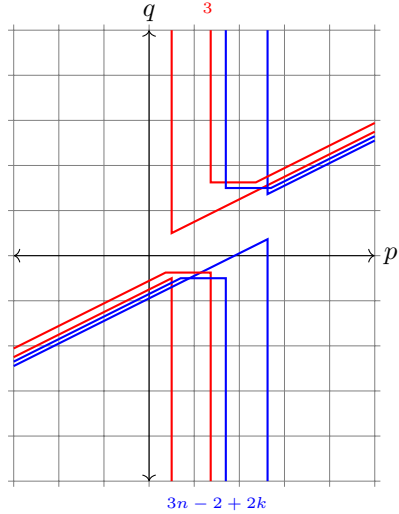

Since $d$ is an $\M_3$-module map, we only need to consider the value of $d$ in degrees $(0,0)$, $(1,1)$, and $(2,1)$. The quotient lemma guarantees that $d^{0,0}=0$. Since there is nothing in degree $(3,1)$, it also must be the case that $d^{2,1}=0$. A similar strategy from previous examples utilizing the module structure of $\ebc$ guarantees that in fact all differentials must be 0. This leaves us with the extension problem 
\[\Sigma^{2,1}\M_3\oplus\ebc^{\oplus (3n-2+2k)}\hookrightarrow H^{*,*}(\Hex_{n+1,3(n+1)-2+2k}[3(n+1)+2k])\twoheadrightarrow \M_3\oplus\ebc^{\oplus 3}.\]
We know from Lemmas \ref{EBextensions} and \ref{otherextensions} that this extension is trivial. Thus we have
\[H^{*,*}(\Hex_{n,3n-2+2k}[3n+2k])\cong \M_3\oplus\Sigma^{2,1}\M_3 \oplus \ebc^{\oplus 3n-2+2k}.\]
\end{proof}

%%%%%%%%Hex_n, g>0 case%%%%%%%%%%%%%
\begin{proof}[Proof of Theorem \ref{classificationversion}, Case (2) inductive step on $g$, {$\Hex_{n,3n-2+2k+3g}[3n+2k]$}]
The last case to be considered is $\Hex_{n,3n-2+2k+3g}[3n+2k]$ when $g>0$. For this we construct the cofiber sequence
\[Y_+\hookrightarrow \Hex_{n,3n-2+2k+3g}[3n+2k]_+\rightarrow \Hex_{n,3n-2+2k}[3n+2k]\]
where $Y$ is the space in red in Figure \ref{hexexcofibseq}. Recall that $Y\simeq \left(\bigvee_{2g}S^{1,0}\right)\wedge {C_3}_+$. So the long exact sequence corresponding to this cofiber sequence has differential
\[d^{p,q}\colon H^{p,q}(Y)\rightarrow \tilde{H}^{*,*}(\Hex_{n,3n-2+2k}[3n+2k]).\]
This differential can be see in Figure \ref{gnonzerocase}.

\begin{figure}
\begin{center}
\includegraphics[scale=.35]{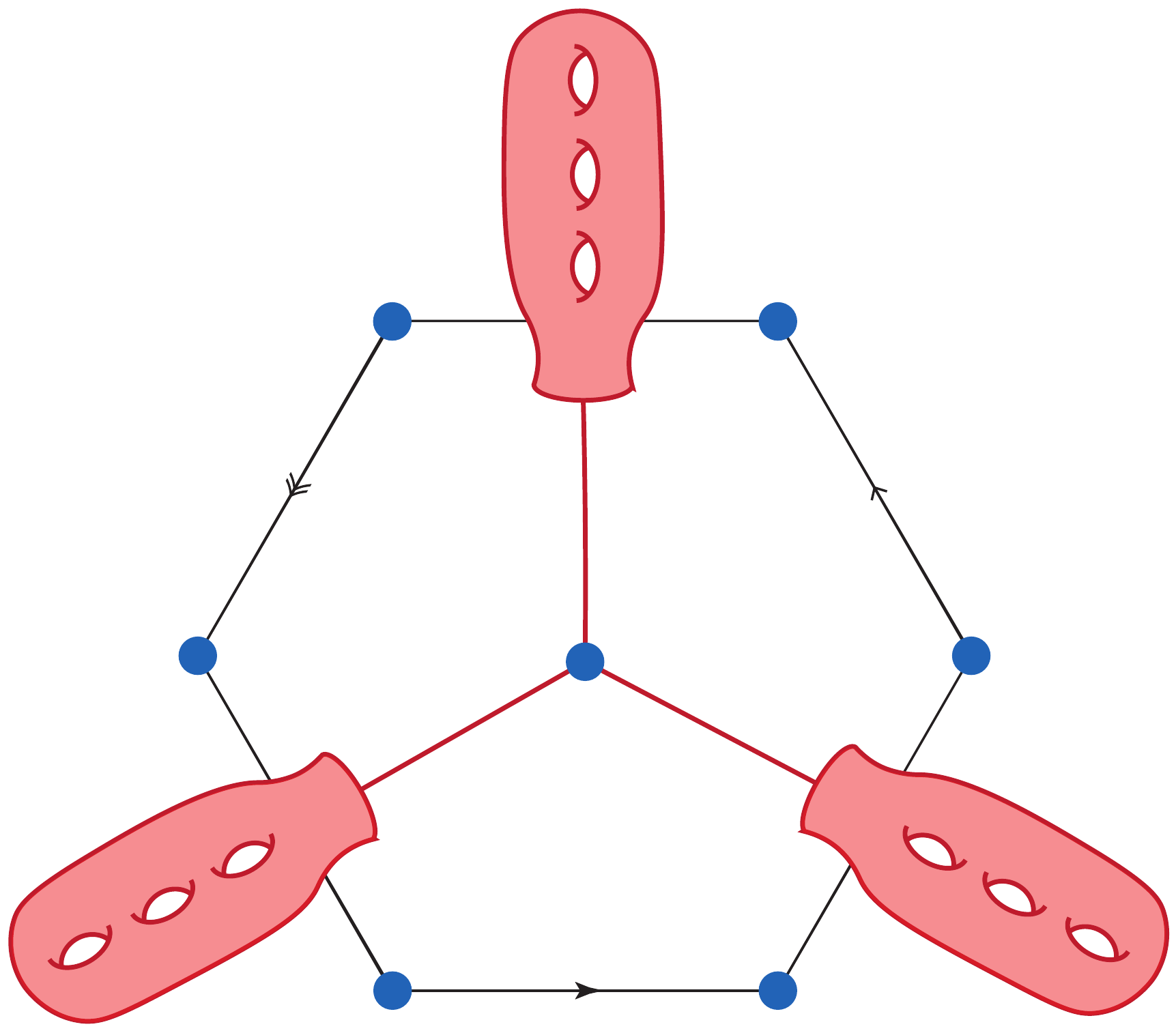}
\end{center}
\caption{\label{hexexcofibseq} The space $Y\simeq \bigvee_{2g}S^{1,0}\wedge {C_3}_+$, shown in red for $n=1$,$k=0$,$g=3$.}
\end{figure}

\begin{figure}
\begin{center}
	\begin{tikzpicture}[scale=0.6]
		\draw[help lines] (-3.125,-5.125) grid (5.125, 5.125);
		\draw[<->] (-3,0)--(5,0)node[right]{$p$};
		\draw[<->] (0,-5)--(0,5)node[above]{$q$};;
		\scone{0}{0}{red};
		\cthree{1}{0}{red};
		\lab{1}{$2g$}{red};
		\eb{1.2}{1}{blue};
		\draw[blue] (1+1/2,-5.5) node{\tiny{$3n-2+2k$}};
		\scone{2}{1.125}{blue};
		\draw[very thick,->] (1.5,.625) -- (2.5,.625);
	\end{tikzpicture}
	\end{center}
	\caption{\label{gnonzerocase} The differential $d\colon H^{*,*}(Y)\rightarrow\tilde{H}^{*,*}(\Hex_{n,3n-2+2k}[3n+2k])$.}
\end{figure}

Since there is nothing for it to hit, we can easily observe that $d^{0,0}=0$. The quotient lemma additionally allows us to conclude $d^{1,0}=0$, and thus $d^{1,q}=0$ by linearity. In particular, the total differential is zero.

We then turn to solve the extension problem 
\[0\rightarrow \coker(d)\rightarrow H^{*,*}(\Hex_{n,3n-2+2k+3g}[3n+2k])\rightarrow \ker(d)\rightarrow 0\]
where $\coker(d)=\ebc^{\oplus 3n-2+2k}\oplus\Sigma^{2,1}\M_3$ and $\ker(d)=\M_3\oplus\left(\Sigma^{1,0}H^{*,*}(C_3)\right)^{\oplus 2g}$. We again recall Remark \ref{M3summand} and observe that $\M_3\subseteq \ker(d)$ must split off as a summand of $H^{*,*}(\Hex_{n,3n-2+2k+3g}[3n+2k])$. A similar argument to that of the base case in isomorphism class (1) of Theorem \ref{classificationversion} %\ref{M_(2k+3g)[2k+2] cohomology} [MISSING REFERENCE] 
guarantees that $\left(\Sigma^{1,0}H^{*,*}(C_3)\right)^{\oplus 2g}$ in $\ker (d)$ must split off as well.

Thus we can conclude the extension is trivial, and
\[H^{*,*}(\Hex_{n,3n-2+2k+3g}[3n+2k])\cong \M_3\oplus \Sigma^{2,1}\M_3\oplus \ebc^{\oplus 3n-2+2k}\oplus \left(\Sigma^{1,0}H^{*,*}(C_3)\right)^{\oplus 2g}.\]
\end{proof}

The third isomorphism class of surface presented in Theorem \ref{classificationversion} is $N_{4k+3r}[2k+2]\cong S^{2,1}+k[R_3]\#_3N_r$. To compute the cohomology of this family of isomorphism classes, we need only induct on $r$ since $H^{*,*}(S^{2,1}+k[R_3])$ is determined by Case (1).

%%%%%%%%%N_{4k+3r}[2k+2] cohomology computation%%%%%%%%%%%%
\begin{proof}[Proof of Theorem \ref{classificationversion}, Case (3) inductive step on $r$, {$N_{4k+3r}[2k+2]$}]
The space $N_{4k+3r}[2k+2]$ (assuming $r\geq 1$) is non-orientable with $\beta$-genus $4k+3r$ and $F=2k+2$. Then $F-2=2k$ and $(\beta-2F+1)/3=r-1$. So our goal is to show
\[H^{*,*}(N_{4k+3r}[2k+2])\cong \M_3\oplus \ebc^{\oplus 2k}\oplus \left(\Sigma^{1,0}H^{*,*}(C_3)\right)^{\oplus r-1}.\]
We begin with the cofiber sequence
\[\left(\tilde{N}_r\times C_3\right)_+ \hookrightarrow N_{4k+3r}[2k+2]_+ \rightarrow \Sph_{2k}[2k+2]\vee EB.\]
This gives us the following long exact sequence on cohomology
\[\cdots \rightarrow H^{p,q}(N_{4k+3r}[2k+2])\rightarrow H^{p,q}(\tilde{N}_r\times C_3)\xrightarrow{d}\tilde{H}^{p+1,q}(\Sph_{2k}[2k+2]\vee EB)\rightarrow\cdots \]
with differential
\[d^{p,q}\colon H^{p,q}(\tilde{N}_r\times C_3)\rightarrow\tilde{H}^{p+1,q}(\Sph_{2k}[2k+2]\vee EB)\]
as shown below: 
 
\begin{center}
	\begin{tikzpicture}[scale=0.6]
		\draw[help lines] (-3.125,-5.125) grid (5.125, 5.125);
		\draw[<->] (-3,0)--(5,0)node[right]{$p$};
		\draw[<->] (0,-5)--(0,5)node[above]{$q$};;
		\cthree{0}{0}{red};
		\cthree{1}{0}{red};
		\lab{.75}{$r$}{red};
		\eb{1.2}{1}{blue};
		\lab{1.45}{$2k$}{blue};
		\scone{2}{1.25}{blue};
		\draw[very thick,->] (1.5,.75) -- (2.5,.75);
		\draw[very thick,->] (.5,-.75) -- (1.75,-.75);
	\end{tikzpicture}
\end{center}

To determine if this differential is nonzero, we start with the quotient lemma. Observe that $N_{4k+3r}[2k+2]/C_3\simeq N_r$, and we have
\[\tilde{H}^p_{\text{sing}}(N_r;\Z/3)=\begin{cases}
\Z/3 & \text{for }p=0 \\
(\Z/3)^{r-1} & \text{for }p=1 \\
0 & \text{else}
\end{cases}\]
Thus it must be the case that $d^{1,0}$ is nonzero. Otherwise $H^{2,0}(N_{4k+3r}[2k+2])\neq 0$ and we would contradict the results of the quotient lemma. By linearity, we get that $d^{1,q}$ is nonzero for all $q\leq 0$ and is zero when $q>0$.

We next turn to $d^{0,q}$. Using a similar argument from the base case of isomorphism class (1) in Theorem \ref{classificationversion}, we can observe that since $\M_3$ and $\text{coker}(d)$ are submodules of $H^{*,*}(N_{4k+3r}[2k+2])$, $d^{0,q}$ cannot be zero.

We are left to determine if the extension
\[\operatorname{coker}(d) \hookrightarrow H^{*,*}(N_{4k+3r}[2k+2])\twoheadrightarrow \ker d\]
is nontrivial, where $\operatorname{coker}(d)$ and $\ker (d)$ are depicted in Figure \ref{kernelandcokernel}. We already know that this extension is nontrivial since $\M_3\subseteq H^{*,*}(N_{4k+3r}[2k+2])$. As in the computation for Case (1) in Theorem \ref{classificationversion}, we get that $\left(\Sigma^{1,0}H^{*,*}(C_3)\right)^{\oplus r-1}\subseteq \ker d$ must split off as a summand of $H^{*,*}(N_{4k+3r}[2k+2])$ since no possible nontrivial extensions from this module can exist in this case. 

\begin{figure}
\begin{center}
	\begin{tikzpicture}[scale=0.6]
		\draw[help lines] (-3.125,-5.125) grid (5.125, 5.125);
		\draw[<->] (-3,0)--(5,0)node[right]{$p$};
		\draw[<->] (0,-5)--(0,5)node[above]{$q$};;
		\draw[thick,red] (0.5,0.5) -- (0.5,5);
		\draw[thick,red] (1.3,1.5) -- (1.3,5);
		\draw[thick, blue] (2+1/2,5) -- (2+1/2,1.125+1/2) -- (5,{(5-2)/2+1.125+1/4});
		\draw[thick, blue] (0+1/2,-5) -- (0 +1/2, 0 -0.5) -- (-3,{(-3-0)/2-3/4+0});
		\cthree{1}{0}{red};
		\eb{1.2}{1}{blue};
		\lab{1.2}{$2k$}{blue};
		\draw[red] (1+1/2,-5.5) node{\tiny{$r-1$}};
	\end{tikzpicture}
	\end{center}
	\caption{\label{kernelandcokernel} The modules $\ker(d)$ and $\coker(d)$.}
\end{figure}
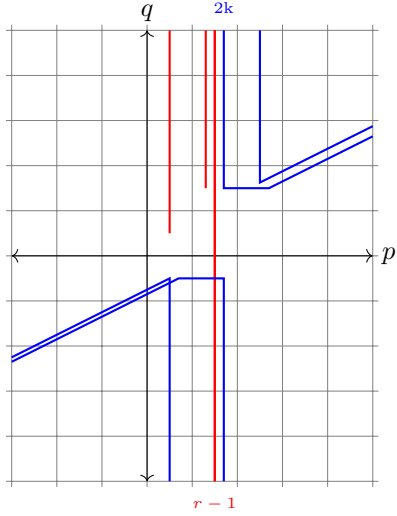

Finally, we conclude that 
\[H^{*,*}(N_{4k+3r}[2k+2])\cong \M_3\oplus \left(\Sigma^{1,0}H^{*,*}(C_3)\right)^{\oplus r-1}\oplus \ebc^{\oplus 2k}.\]
\end{proof}

%%%%%%%%%N_{1+4k+3r}[2k+1] cohomology computation%%%%%%%%%%
% \begin{class}[{$N_{1+4k+3r}[2k+1]\cong N_1[1]+k[R_3]\#_3 N_r$}]
We next explore Case (4) of Theorem \ref{classificationversion}. Recall that the cohomology of $N_1[1]$ was determined in Example \ref{N_1[1] cohomology}, so we only have left to consider $N_{1+4k+3r}[2k+1]$ when $k\geq0$ and $r\geq 0$. Our final computations will proceed as induction on the values of $k$ and $r$ respectively.  %Our more general computation will be done in two steps. First we will restrict to when $r=0$. Then we will allow $r\geq 0$ and compute the cohomology in the general case.[MAYBE REWORD BASED ON SIGNPOSTING -- AND FIX BELOW] 

\begin{proof}[Proof of Theorem \ref{classificationversion}, Case (4) inductive step on $k$, {$N_{1+4k}[2k+1]$}]
The space $N_{1+4k+3r}[2k+1]$ is non-orientable with $\beta=1+4k+3r$ and $F=2k+1$. In particular, $F-1=2k$ and $(\beta-2F+1)/3=r$. So our goal is to show  
\[N_{1+4k+3r}[2k+1]\cong\M_3\oplus\ebc^{\oplus 2k}\oplus \left(\Sigma^{1,0}H^{*,*}(C_3)\right)^{\oplus r}.\]

Recall that if $r=0$, then $N_{1+4k+3r}[2k+1]\cong N_1[1]+k[R_3]$. We start with a cofiber sequence
\[{S^1_{\text{free}}}_+ \hookrightarrow N_1[1]+k[R_3]_+\rightarrow \Sph_{2k}[2k+2]\]
with long exact sequence 
\[\rightarrow \tilde{H}^{p,q}(\Sph_{2k}[2k+2])\rightarrow H^{p,q}(N_{1+4k}[2k+1])\rightarrow H^{p,q}(S^1_{\text{free}})\xrightarrow{d} \tilde{H}^{p+1,q}(\Sph_{2k}[2k+2])\rightarrow \]
on cohomology. We once again try to determine the total differential $\bigoplus_{p,q}d^{p,q}$ which is highlighted on the left of Figure \ref{nonorientablediff2}. 

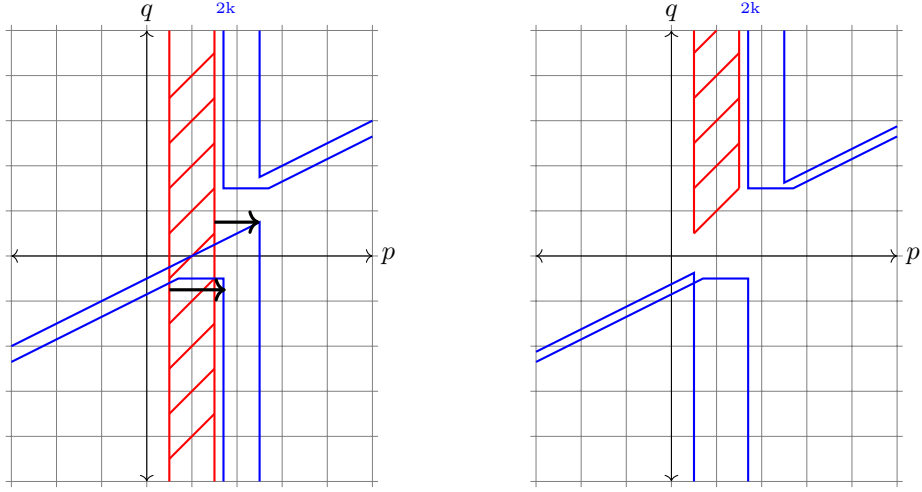
\begin{figure}
\begin{minipage}{0.45\textwidth}
\begin{center}
	\begin{tikzpicture}[scale=.6]
		\draw[help lines] (-3.125,-5.125) grid (5.125, 5.125);
		\draw[<->] (-3,0)--(5,0)node[right]{$p$};
		\draw[<->] (0,-5)--(0,5)node[above]{$q$};;
		\cthree{0}{1}{red};
		\eb{1.2}{1}{blue};
		\lab{1.25}{$2k$}{blue};
		\scone{2}{1.25}{blue};
		\draw[very thick,->] (1.5,.75) -- (2.5,.75);
		\draw[very thick,->] (.5,-.75) -- (1.75,-.75);
	\end{tikzpicture}
	\end{center}
\end{minipage}\ \begin{minipage}{0.45\textwidth}
\begin{center}
	\begin{tikzpicture}[scale=.6]
		\draw[help lines] (-3.125,-5.125) grid (5.125, 5.125);
		\draw[<->] (-3,0)--(5,0)node[right]{$p$};
		\draw[<->] (0,-5)--(0,5)node[above]{$q$};;
		\draw[thick,red] (0.5,0.5) -- (0.5,5);
		\draw[thick,red] (1.5,1.5) -- (1.5,5);
		\draw[thick,red] (0.5,0.5) -- (1.5,1.5);
		\draw[thick,red] (0.5,1.5) -- (1.5,2.5);
		\draw[thick,red] (0.5,2.5) -- (1.5,3.5);
		\draw[thick,red] (0.5,3.5) -- (1.5,4.5);
		\draw[thick,red] (0.5,4.5) -- (1,5);
		\draw[thick, blue] (2+1/2,5) -- (2+1/2,1.125+1/2) -- (5,{(5-2)/2+1.125+1/4});
		\draw[thick, blue] (1/2,-5) -- (1/2, .125 -0.5) -- (-3,{(-3)/2-3/4+.125});
		\eb{1.2}{1}{blue};
		\lab{1.25}{$2k$}{blue};
	\end{tikzpicture}
	\end{center}
\end{minipage}
\caption{\label{nonorientablediff2} The differential $d$ (left) and its kernel and cokernel (right).}
\end{figure}

As usual we start with the quotient lemma. Observe that $\left(N_{1}[1]+k[R_3]\right)/C_3\simeq \R P^2$, so it must be that $H^{p,0}(N_{1+4k}[2k+1])=0$ for $p\neq 0$. In particular, $d^{1,0}$ must be an isomorphism, and by linearity we can determine the behavior of the differential in all other degrees. In particular, $d^{p,q}$ is $0$ when $(p,q)=(0,0)$ or $q\geq 1$. Otherwise $d^{p,q}\neq 0$ with image in $\Sigma^{2,1}\M_3\subseteq \tilde{H}^{*,*}(\Sph_{2k}[2k+2])$. 

Now that we know the value of the differential, we can find $\ker(d)$ and $\coker(d)$. These modules are depicted on the right of Figure \ref{nonorientablediff2}. We are left to solve the extension problem
\[0\rightarrow \coker (d)\rightarrow H^{*,*}(N_{1+4k}[2k+1])\rightarrow\ker(d)\rightarrow 0.\]

Since $\M_3\subseteq H^{*,*}(N_{1+4k}[2k+1])$, we can immediately see that there must be a nontrivial extension. Knowing that $\M_3$ is a summand of $H^{*,*}(N_{1+4k}[2k+1])$, there is only one possible solution:
\[H^{*,*}(N_{1+4k}[2k+1])\cong \M_3\oplus \ebc^{\oplus 2k}.\]
\end{proof}

\begin{proof}[Proof of Theorem \ref{classificationversion}, Case (4) inductive step on $r$, {$N_{1+4k+3r}[2k+1]$}]
We finally turn to the general case with $r\geq 0$ and start by constructing the cofiber sequence
\[Y_+ \hookrightarrow N_{1+4k+3r}[2k+1]_+\rightarrow N_{1+4k}[2k+1]\]
where $Y$ is the space shown in red in Figure \ref{crosscapcofibseq}. Notice that $Y$ is homotopy equivalent to $\left(\bigvee_{r}S^{1,0}\right)\wedge {C_3}_+$.

From here we can examine the differential $d\colon H^{*,*}(Y)\rightarrow\tilde{H}^{*+1,*}(N_{1+4k}[2k+1])$ of the corresponding long exact sequence on cohomology. The left diagram of Figure \ref{nonorientablediff3} shows the $\M_3$-modules $H^{*,*}(Y)$ and $\tilde{H}^{*,*}(N_{1+4k}[2k+1])$. Since $\tilde{H}^{p+1,0}(N_{1+4k}[2k+1])=0$ for all $p$, we immediately see that $d^{p,0}$ must be $0$. By linearity, this guarantees the differential $d^{p,q}$ must be the zero map for all $(p,q)$. 

\begin{figure}
\begin{center}
\includegraphics[scale=.6]{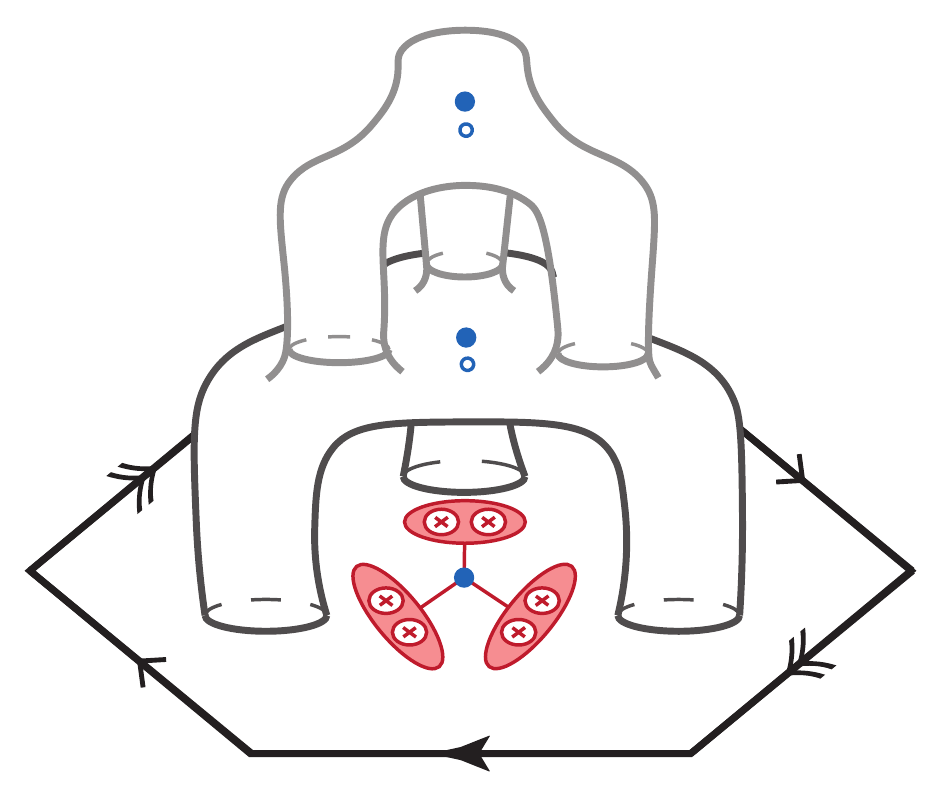}
\end{center}
\caption{\label{crosscapcofibseq} The space $Y$ in $N_{1+2k+3r}[2k+1]$ is shown in red in the case $r=k=2$.}
\end{figure}

\begin{figure}
\begin{center}
	\begin{tikzpicture}[scale=0.6]
		\draw[help lines] (-3.125,-5.125) grid (5.125, 5.125);
		\draw[<->] (-3,0)--(5,0)node[right]{$p$};
		\draw[<->] (0,-5)--(0,5)node[above]{$q$};;
		\scone{0}{0}{red};
		\cthree{1}{0}{red};
		\lab{.85}{$r$}{red};
		\eb{1.2}{1}{blue};
		\lab{1.55}{$2k$}{blue};
	\end{tikzpicture}
	\end{center}
	\caption{\label{nonorientablediff3} The spaces $H^{*,*}(Y)=\ker(d)$ and $\tilde{H}^{*,*}(N_{1+4k}[2k+1])=\coker(d)$.}
\end{figure}
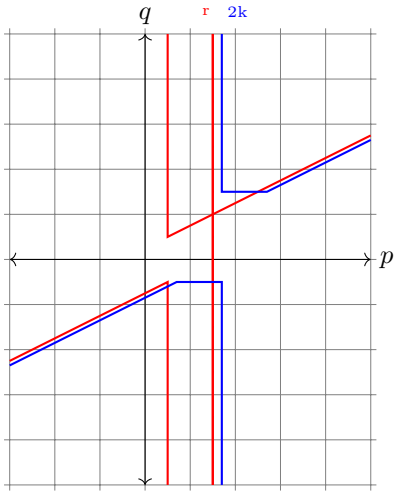

Since the total differential is $0$, $\ker(d)=H^{*,*}(Y)\cong \M_3\oplus \left(\Sigma^{1,0}H^{*,*}(C_3)\right)^{\oplus r}$ and $\coker(d)=\tilde{H}^{*,*}(N_{1+4k}[2k+1])\cong \ebc^{\oplus 2k}$. We now must solve the final extension problem
\[0\rightarrow \ebc^{\oplus 2k}\rightarrow H^{*,*}(N_{1+4k+3r}[2k+1])\rightarrow \M_3\oplus \left(\Sigma^{1,0}H^{*,*}(C_3)\right)^{\oplus r}\rightarrow 0.\]
We know that $\M_3$ must split off as a summand of $H^{*,*}(N_{1+4k+3r}[2k+1])$, and we have seen before in previous arguments that there can be no nontrivial extensions from $\left(\Sigma^{1,0}H^{*,*}(C_3)\right)^{\oplus r}$ to $\ebc$. This gives the desired result of
\[H^{*,*}(N_{1+4k+3r}[2k+1])\cong \M_3\oplus \left(\Sigma^{1,0}(H^{*,*}(C_3)\right)^{\oplus r} \oplus \ebc^{\oplus 2k}.\]
\end{proof}

\bibliographystyle{amsalpha}
\bibliography{main}

\end{document}